\newtheorem{theorem}{Theorem}[section]
\newtheorem{definition}[theorem]{Definition}
\newtheorem{lemma}[theorem]{Lemma}
\newtheorem{proposition}[theorem]{Proposition}
\newtheorem{remark}[theorem]{Remark}
\numberwithin{equation}{section}
\theoremstyle{plain}
\newcommand{\R}{\mathbb{R}}
\newcommand{\Da}{\Delta}
\newcommand{\ve}{\varepsilon}
\begin{document}
\title[Solutions of the Yamabe equation]{Multiplicity of 2-nodal solutions \\ of the  Yamabe equation}

\author{Jorge D\'avila}
\email{jorge.davila@cimat.mx}

\author{H\'ector Barrantes G.}
\email{hector.barrantes@ucr.ac.cr}%

\author{ Isidro H. Munive }
\email{isidro.munive@academicos.udg.mx}

\begin{abstract} Given any closed Riemannian manifold $(M,g)$, we use the gradient flow  method and Sign-Changing Critical Point Theory to
prove multiplicity results for 2-nodal solutions   of a subcritical non-linear equation on $(M,g)$, see Eq. \eqref{yam-nodal-1} below. If $(N,h)$ is a closed Riemannian manifold of constant  positive scalar curvature our result gives multiplicity results for the  Yamabe-type equation on the
Riemannian product $(M\times N , g + \ve h )$, for $\ve >0$ small.

\end{abstract}
\maketitle

\section{\textbf{Introduction}}


On a compact  Riemannian manifold $(M^n , g)$ without boundary of dimension $n\geq 3$, we consider the following equation 
\begin{equation} \label{yam-nodal-1}
-\ve^2 \Da_gu+\left(\frac{s_g}{a_{m+n}}\varepsilon^{2}+1\right)u=\vert u\vert ^{p_{m+n}-2}u,
\end{equation}
where $s_g$ is the scalar curvature of $g$,  $ \Delta_g$  is the is the Laplace Beltrami operator associated to $g $,  $a_{m+n} =\frac{4(n+m-1)}{n+m-2}$, $p_{n+m} =\frac{2(n+m)}{n+m-2}$, with $m \in \mathbb{N}$. Moreover, we  consider $\varepsilon>0$ small enough so that 
\begin{equation}
\label{coereq}
1+\frac{s_g}{a_{m+n}}\varepsilon^{2}>c_{\varepsilon}\quad \text{in $M$},
\end{equation}
 for some $c_{\varepsilon}>0$. 
 
 The  study of this  equation is motivated, on one hand, by the Yamabe problem on products of Riemannian manifolds. If  $u:M\rightarrow \R$ is a positive solution of Eq. \eqref{yam-nodal-1} then $u$ solves the Yamabe equation in the product $(M^n\times N^m, g+\varepsilon^2h)$, where $(N^m,h)$ is a Riemannian manifold with constant scalar curvature $s_h$ equal to $a_{m+n}$, see, for instance, \cite{Jimmy} for details.

There has also been interest in {\it nodal} solutions of non-linear equations of the type \eqref{yam-nodal-1} (i.e. solutions that change sign). See for instance the articles
\cite{Ammann2, Clapp1, Clapp2, Ding, JC, GHenry, Robert} and, more recently, the paper \cite{PrV}.  Nodal solutions $u$ of \eqref{yam-nodal-1} do not give metrics of constant scalar curvature since $u$ vanishes at some points and therefore $|u|^{p_n -2} g$ is
not
a Riemannian metric, but they might have geometric interest. The existence of at least one nodal solution is proved
in general cases in \cite{Ammann2}, as minimizers for the second Yamabe invariant. But there are not as many results
about multiplicity of nodal solutions as in the positive case.


In \cite{CM}, M.Clapp and M. Micheletti considered the problem of obtain $2$-nodal solutions
to the equation
\[
-\ve^2 \Da_gu+ u=\vert u\vert ^{p_{m+n}-2}u,
 \]
 over a closed Riemannian manifold $(M,h)$.  In order to study this problem, they used gradient flow techniques to prove the existence of $2$-nodal solutions. In this work we obtain existence results for Eq. \eqref{yam-nodal-1}, see Theorem \ref{Teo1-Existence}, using  gradient flow  techniques from \cite{CM} (see Chapter 1 of \cite{Zou} for details)
for the functional
 \begin{equation}\label{Jve-1}
J_{\ve} (u) \doteq
\frac{1}{\ve^{n}} \int_M \left( \frac{1}{2} \ve^2 | \nabla_g u |_g^2  +  \frac{1}{2}\left( \frac{s_g}{a_{m+n}}\ve^2 +1\right) u^2 -\frac{1}{p_{m+n}}  \vert u\vert^{p_{m+n}} \right) d\mu_g  .
\end{equation}
We recall here  that \eqref{yam-nodal-1} is the Euler-Lagrange equation of $J_{\ve}$.  The Nehari manifold $\mathcal{N}_\ve$ associate to the functional $J_{\ve}$ is the following set:
\[
\mathcal{N}_{\ve} \doteq   \Big\{ u\in H_{\ve}\setminus\{0\} :\mathcal{L}_{\ve}(u,u) =|u|^{p}_{p,\ve}  \Big\},
\]
where   $|u|_{p,\ve}$ and $\mathcal{L}_{\ve}(u,u)$  are given by \eqref{LpSpe} and  \eqref{Lep}, respectively. Notice that any sign changing solution   belongs to the set
\begin{equation}
\label{Eep}
\mathcal{E}_{\ve}\doteq\Big\{u\in H_{\ve}: u^{+}, u^{-}\in \mathcal{N}_{\ve}\Big\}\subset \mathcal{N}_\ve.
\end{equation}
Our first main result is the following.

\begin{theorem} \label{Teo1-Existence}
 (Existence)
 For every $\ve >0$ there exists $u_{\ve}\in \mathcal{E}_{\ve}$  such that $J_{\ve}(u_{\ve})=\mathbf{d}_\ve$, where $\mathbf{d}_\ve \doteq  \inf_{\mathcal{E}_{\ve}}J_{\ve}$,
  and $u_{\ve}$ is a sign changing solutions of Eq. \eqref{yam-nodal-1}. Moreover,
$\textbf{d}_\ve \geq 2 \textbf{m}_\ve$, where $\textbf{m}_{\ve} \doteq \inf_{\mathcal{N}_{\ve}}J_{\ve}$.
 \end{theorem}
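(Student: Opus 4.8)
The plan is to realize $\mathbf{d}_\ve$ as an attained minimum of $J_\ve$ over the constraint set $\mathcal{E}_\ve$ and then to show that any such minimizer is automatically a free critical point of $J_\ve$. First I would record the algebraic decomposition: since $u^+u^-\equiv0$ and $\nabla_g u^+\cdot\nabla_g u^-\equiv0$ a.e., we have $\mathcal{L}_\ve(u^+,u^-)=0$ and $|u|_{p,\ve}^{p_{m+n}}=|u^+|_{p,\ve}^{p_{m+n}}+|u^-|_{p,\ve}^{p_{m+n}}$, hence
\[
J_\ve(u)=J_\ve(u^+)+J_\ve(u^-)\qquad\text{for every }u\in H_\ve .
\]
By \eqref{coereq} the quadratic form $\mathcal{L}_\ve$ is positive definite and equivalent to the standard $H^1$ inner product, so the usual Nehari-manifold bookkeeping gives $J_\ve(v)=\left(\tfrac12-\tfrac1{p_{m+n}}\right)\mathcal{L}_\ve(v,v)$ for $v\in\mathcal{N}_\ve$, a uniform lower bound $\mathcal{L}_\ve(v,v)\ge\delta_0>0$ on $\mathcal{N}_\ve$, and therefore $\mathbf{m}_\ve\ge\left(\tfrac12-\tfrac1{p_{m+n}}\right)\delta_0>0$ together with coercivity of $J_\ve|_{\mathcal{N}_\ve}$. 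For $u\in\mathcal{E}_\ve$ both $u^\pm\in\mathcal{N}_\ve$, so the displayed identity yields $J_\ve(u)=J_\ve(u^+)+J_\ve(u^-)\ge2\mathbf{m}_\ve$; taking the infimum over $\mathcal{E}_\ve$ (nonempty, since any $u$ with $u^\pm\ne0$ can be rescaled on each nodal domain to land in $\mathcal{E}_\ve$) proves the last assertion, $\mathbf{d}_\ve\ge2\mathbf{m}_\ve$.

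Next I would run the direct method on $\mathcal{E}_\ve$. Let $(u_k)\subset\mathcal{E}_\ve$ with $J_\ve(u_k)\to\mathbf{d}_\ve$; coercivity of $J_\ve$ on $\mathcal{N}_\ve$ bounds $(u_k^\pm)$ and hence $(u_k)$ in $H_\ve$, so up to a subsequence $u_k\rightharpoonup u$ in $H_\ve$. Since $p_{m+n}=\tfrac{2(m+n)}{m+n-2}<\tfrac{2n}{n-2}$ (indeed $\tfrac{2n}{n-2}-\tfrac{2(m+n)}{m+n-2}=\tfrac{4m}{(n-2)(m+n-2)}>0$), the embedding $H_\ve\hookrightarrow L^{p_{m+n}}(M)$ is compact, so $u_k^\pm\to u^\pm$ in $L^{p_{m+n}}$; the lower bound $|u_k^\pm|_{p,\ve}^{p_{m+n}}=\mathcal{L}_\ve(u_k^\pm,u_k^\pm)\ge\delta_0$ then forces $u^\pm\ne0$. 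Weak lower semicontinuity gives $\mathcal{L}_\ve(u^\pm,u^\pm)\le\liminf_k\mathcal{L}_\ve(u_k^\pm,u_k^\pm)$, so there are unique $t^\pm\in(0,1]$ with $t^\pm u^\pm\in\mathcal{N}_\ve$; set $u_\ve\doteq t^+u^+ + t^-u^-\in\mathcal{E}_\ve$. Since for fixed $v$ with $v^\pm\ne0$ the map $t\mapsto J_\ve(tv)$ attains its maximum over $t>0$ precisely at the Nehari projection of $v$, with value $\left(\tfrac12-\tfrac1{p_{m+n}}\right)\mathcal{L}_\ve(v,v)^{\frac{p_{m+n}}{p_{m+n}-2}}|v|_{p,\ve}^{-\frac{2p_{m+n}}{p_{m+n}-2}}$ — an increasing function of $\mathcal{L}_\ve(v,v)$ at fixed $|v|_{p,\ve}$ — applying this to $v=u^\pm$ and using the semicontinuity and $L^{p_{m+n}}$-convergence yields $J_\ve(t^\pm u^\pm)\le\liminf_k J_\ve(u_k^\pm)$. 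Adding and using $\liminf a_k+\liminf b_k\le\liminf(a_k+b_k)$,
\[
J_\ve(u_\ve)=J_\ve(t^+u^+)+J_\ve(t^-u^-)\le\liminf_k\big(J_\ve(u_k^+)+J_\ve(u_k^-)\big)=\mathbf{d}_\ve ,
\]
and since $u_\ve\in\mathcal{E}_\ve$ the reverse inequality holds, so $J_\ve(u_\ve)=\mathbf{d}_\ve$; as $u_\ve^\pm=t^\pm u^\pm\ne0$, the minimizer $u_\ve$ genuinely changes sign.

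Finally — and this is the main obstacle — I would show $J_\ve'(u_\ve)=0$, following the gradient-flow scheme of \cite{CM} (cf. Chapter~1 of \cite{Zou}). For $v$ with $v^\pm\ne0$ the function $\varphi_v(s,t)\doteq J_\ve(sv^++tv^-)$ on $(0,\infty)^2$ has a unique critical point, which is its strict global maximum and realizes the $\mathcal{E}_\ve$-projection $m(v)\doteq s_v v^++t_v v^-$; hence $\mathbf{d}_\ve=\inf\{\varphi_v(s_v,t_v):v^\pm\ne0\}$, attained at $u_\ve$ with $(s_{u_\ve},t_{u_\ve})=(1,1)$. If $J_\ve'(u_\ve)\ne0$, a pseudo-gradient together with the quantitative deformation lemma produces a continuous deformation $\eta$, equal to the identity outside a small neighborhood of $u_\ve$, with $J_\ve(\eta(su_\ve^++tu_\ve^-))<\mathbf{d}_\ve$ for $(s,t)$ near $(1,1)$; a Brouwer-degree argument, nonsingular at $(1,1)$ by the non-degeneracy of $(1,1)$ as a maximum of $\varphi_{u_\ve}$, then shows that the deformed family still meets $\mathcal{E}_\ve$, producing an element of $\mathcal{E}_\ve$ with $J_\ve$-value $<\mathbf{d}_\ve$ — contradicting the definition of $\mathbf{d}_\ve$. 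The delicate points are exactly the uniqueness and non-degeneracy of the fiber maximum for every $v\in\mathcal{E}_\ve$ and the compatibility of the deformation with the nodal splitting; this is where subcriticality of $p_{m+n}$ and the coercivity \eqref{coereq} are essential. Once $J_\ve'(u_\ve)=0$, $u_\ve$ is a weak solution of \eqref{yam-nodal-1}, and since $p_{m+n}$ is subcritical, elliptic regularity and bootstrapping make it a classical sign-changing solution.
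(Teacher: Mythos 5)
Your argument is correct, but it follows a genuinely different route than the paper. The paper does not minimize directly over $\mathcal{E}_\ve$: it minimizes $J_\ve$ over the larger, flow-invariant set $\mathcal{Z}_\ve = H_\ve\setminus\mathcal{A}_\ve(\mathcal{D}_\ve)$. There, Lemma~\ref{ekeland} (Ekeland applied along the negative gradient flow) produces a minimizing sequence with $\mathcal{L}_\ve(J_\ve'(u_k))\to 0$, Lemma~\ref{JP-S} (Palais--Smale) gives strong convergence $u_{k_j}\to v_\ve$, closedness of $\mathcal{Z}_\ve$ keeps the limit inside, and strict positive invariance of $\mathcal{Z}_\ve$ under $\varphi_\ve$ forces $v_\ve$ to be a rest point of the flow, i.e.\ $J_\ve'(v_\ve)=0$. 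Criticality thus comes \emph{for free} from flow-invariance, with no deformation argument on a two-parameter fiber. One then observes $v_\ve\in\mathcal{E}_\ve$ and equates $\inf_{\mathcal{Z}_\ve}J_\ve$ with $\mathbf{d}_\ve$. By contrast, you run the direct method on $\mathcal{E}_\ve$ itself — the splitting $J_\ve(u)=J_\ve(u^+)+J_\ve(u^-)$, weak lower semicontinuity, compact $L^{p_{m+n}}$ convergence (correctly using subcriticality of $p_{m+n}$ for $H^1(M^n)$), and Nehari re-projection of $u^\pm$ to recover a minimizer — and then must reprove criticality separately via a quantitative deformation plus a Brouwer-degree argument on the fiber map $(s,t)\mapsto su^++tu^-$, in the style of Castro--Cossio--Neuberger/Weth. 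That step is valid in this setting and your identification of the delicate points (uniqueness and non-degeneracy of the fiber maximum, compatibility of the deformation with the nodal splitting) is accurate, but it is only sketched, whereas the paper sidesteps it entirely. What your route buys is that it is self-contained at the level of this one theorem and does not require the machinery of $\mathcal{D}_\ve$, $\mathcal{A}_\ve(\mathcal{D}_\ve)$, $\mathcal{Z}_\ve$ and Lemma~\ref{Bspi}; what the paper's route buys is that $\mathcal{Z}_\ve$ and its flow-invariance are exactly the objects reused in the multiplicity results (Theorems~\ref{Teo-Mult-Cat} and \ref{Teo2-Multip}), so the extra construction is amortized. Your proof of $\mathbf{d}_\ve\geq 2\mathbf{m}_\ve$ via the orthogonal splitting is the same short computation the paper uses at the end.

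One small caution on the direct-method step: when you write $J_\ve(t^\pm u^\pm)\le\liminf_k J_\ve(u_k^\pm)$, the cleanest justification in this setting is not the monotonicity-of-the-fiber-maximum phrasing but the elementary one: since $u_k^\pm\in\mathcal{N}_\ve$, $J_\ve(u_k^\pm)=(\tfrac12-\tfrac1p)\,|u_k^\pm|_{p,\ve}^p\to(\tfrac12-\tfrac1p)\,|u^\pm|_{p,\ve}^p$; and weak lower semicontinuity gives $\mathcal{L}_\ve(u^\pm)\le|u^\pm|_{p,\ve}^p$, hence $t^\pm\le 1$ and $J_\ve(t^\pm u^\pm)=(\tfrac12-\tfrac1p)(t^\pm)^p|u^\pm|_{p,\ve}^p\le(\tfrac12-\tfrac1p)|u^\pm|_{p,\ve}^p$. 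This avoids having to compare fiber maxima at different $|v|_{p,\ve}$ values.
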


We consider the equivariant Lusternik-Schnirelmann category $\mathrm{Cat}_G(X)$ of a $G$-space $X$ is the
smallest integer $k$ such that $X$ can be covered by $k$ locally closed $G$-invariant subsets $X_1,\ldots, X_k$, see Definition 5.5 in  \cite{BMW}.

\begin{theorem}
\label{Teo-Mult-Cat}
There exists $\varepsilon_0 > 0$ such that for any $\varepsilon\in (0, \varepsilon_0)$, $J_{\varepsilon}$ has at least $\mathrm{Cat}(\mathcal{Z}_{\varepsilon} \cap J^{d_{\varepsilon}+\delta_0})$ critical points.  Moreover  $\mathrm{Cat}(\mathcal{Z}_\ve \cap J_{\ve}^{d_{\ve} + \delta_0}  )\geq \mathrm{Cupl}\ (\mathcal{Z}_{\ve}\cap J_{\ve}^{d_{\ve}+ \delta_0})\geq Cupl(M)$.
\end{theorem}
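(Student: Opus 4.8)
The plan is to run the Lusternik--Schnirelmann (``photography'') scheme of \cite{CM, Zou}, adapted to the sign-changing framework behind Theorem \ref{Teo1-Existence}, and to reduce everything to a cup-length estimate for the set $\mathcal{Z}_{\varepsilon}\cap J_{\varepsilon}^{d_{\varepsilon}+\delta_0}$. The abstract ingredients I would invoke are: the negative pseudo-gradient flow of $J_{\varepsilon}$ can be modified so as to keep $\mathcal{Z}_{\varepsilon}$ positively invariant and to decrease $J_{\varepsilon}$ strictly off its critical set; $J_{\varepsilon}$ satisfies the Palais--Smale condition below the relevant level (recall that $p_{m+n}$ is subcritical for the Sobolev embedding on the compact manifold $M$, and \eqref{coereq} makes the quadratic part coercive); and every constrained critical point of $J_{\varepsilon}$ on $\mathcal{Z}_{\varepsilon}$ lying below $d_{\varepsilon}+\delta_0$ is in fact a (sign-changing) solution of \eqref{yam-nodal-1}. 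Granting these, classical Lusternik--Schnirelmann theory for the deformed sublevel sets yields at least $\mathrm{Cat}(\mathcal{Z}_{\varepsilon}\cap J_{\varepsilon}^{d_{\varepsilon}+\delta_0})$ critical points of $J_{\varepsilon}$, while $\mathrm{Cat}(X)\ge\mathrm{Cupl}(X)$ is the general topological inequality recalled in \cite{BMW}. Thus it remains to show $\mathrm{Cupl}(\mathcal{Z}_{\varepsilon}\cap J_{\varepsilon}^{d_{\varepsilon}+\delta_0})\ge\mathrm{Cupl}(M)$ for $\varepsilon$ small.

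To this end I would fix an isometric embedding $M\hookrightarrow\R^N$, a tubular neighbourhood $M_\rho=\{y\in\R^N:\mathrm{dist}(y,M)<\rho\}$ and the nearest-point retraction $r\colon M_\rho\to M$ (a homotopy equivalence), and introduce two maps. First, the barycentre-type map
\[
\beta_{\varepsilon}(u)\ \doteq\ \frac{\displaystyle\int_M x\,|u^+|^{p_{m+n}}\,d\mu_g}{\displaystyle\int_M |u^+|^{p_{m+n}}\,d\mu_g}\ \in\ \R^N,
\]
which reads off the location of one nodal region of $u$. Second, a ``photography'' map $\iota_{\varepsilon}\colon M\to\mathcal{Z}_{\varepsilon}\cap J_{\varepsilon}^{d_{\varepsilon}+\delta_0}$ sending $\xi$ to an approximate $2$-nodal solution that concentrates, in one of its nodal regions, near $\xi$, obtained by gluing suitably rescaled copies of the model (limit) solutions and projecting the positive and negative parts onto $\mathcal{N}_{\varepsilon}$, exactly as in \cite{CM}. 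One then checks that $r\circ\beta_{\varepsilon}\circ\iota_{\varepsilon}$ is homotopic to $\mathrm{id}_M$ (by letting the concentration scale shrink). Since $r$ is a homotopy equivalence, the ring homomorphism $\beta_{\varepsilon}^{*}\circ r^{*}\colon H^{*}(M)\to H^{*}(\mathcal{Z}_{\varepsilon}\cap J_{\varepsilon}^{d_{\varepsilon}+\delta_0})$ is then injective, and pushing a maximal cup-nonzero family of classes of $\widetilde H^{*}(M)$ through it gives precisely $\mathrm{Cupl}(\mathcal{Z}_{\varepsilon}\cap J_{\varepsilon}^{d_{\varepsilon}+\delta_0})\ge\mathrm{Cupl}(M)$, which completes the proof.

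Two estimates carry the real weight, and the second is the main obstacle. (i) One needs $J_{\varepsilon}(\iota_{\varepsilon}(\xi))\le d_{\varepsilon}+\delta_0$ uniformly in $\xi\in M$ for $\varepsilon$ small; this follows from $d_{\varepsilon}\ge 2\mathbf{m}_{\varepsilon}$ (Theorem \ref{Teo1-Existence}) together with the expansion $J_{\varepsilon}(\iota_{\varepsilon}(\xi))=2\mathbf{m}_{\varepsilon}+o(1)$, the interaction between the two nodal bumps being negligible in the chosen configuration. (ii) One must show that $\beta_{\varepsilon}$ actually takes values in $M_\rho$ on the \emph{whole} album, not merely on the image of $\iota_{\varepsilon}$. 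Here one uses $J_{\varepsilon}(u)=J_{\varepsilon}(u^{+})+J_{\varepsilon}(u^{-})$ on $\mathcal{E}_{\varepsilon}$ together with $J_{\varepsilon}(u^{\pm})\ge\mathbf{m}_{\varepsilon}$: every $u\in\mathcal{Z}_{\varepsilon}\cap J_{\varepsilon}^{d_{\varepsilon}+\delta_0}$ satisfies $\mathbf{m}_{\varepsilon}\le J_{\varepsilon}(u^{+})\le\mathbf{m}_{\varepsilon}+\delta_0+o(1)$, so $u^{+}$ is a near-minimiser on $\mathcal{N}_{\varepsilon}$, and a concentration--compactness analysis on the $\varepsilon$-rescaled manifold forces the normalised mass distribution $|u^{+}|^{p_{m+n}}\,d\mu_g$ to concentrate, up to an error tending to $0$ as $\varepsilon\to0$ and then $\delta_0\to0$, at a single point of $M$. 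Fixing $\rho$, then $\delta_0$ small enough to exclude splitting of $u^{+}$ into two or more bubbles or into a non-concentrating profile, and then $\varepsilon_0$ accordingly, makes $\beta_{\varepsilon}(u)\in M_\rho$ for all such $u$; the same analysis applied to $\iota_{\varepsilon}(\xi)$ gives $\beta_{\varepsilon}(\iota_{\varepsilon}(\xi))\to\xi$ and hence the homotopy $r\circ\beta_{\varepsilon}\circ\iota_{\varepsilon}\simeq\mathrm{id}_M$. Carrying out this uniform concentration estimate rigorously, while keeping the order of the quantifiers straight, is the delicate point of the argument.
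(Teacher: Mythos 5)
Your proof of the first assertion (that $J_{\ve}$ has at least $\mathrm{Cat}$ critical points) follows essentially the same route as the paper: Palais--Smale for $J_{\ve}$, positive invariance of $\mathcal{Z}_{\ve}\cap J_{\ve}^{\mathbf{d}_{\ve}+\delta_0}$ under the negative gradient flow, and a Lusternik--Schnirelmann deformation argument. The paper phrases it as a basin-of-attraction covering (assume finitely many pairs $\pm u_j$ of critical points, let $X_j$ be the stable set of $\pm u_j$, show each $X_j$ is locally closed and contractible along the flow, conclude $\mathrm{Cat}_{\mathbb{Z}_2}\leq k$), while you invoke the classical minimax scheme; these are equivalent presentations of the same idea.

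For the cup-length lower bound you depart from the paper, and here there is a genuine gap. You propose a ``photography'' map $\iota_{\ve}\colon M\to\mathcal{Z}_{\ve}\cap J_{\ve}^{\mathbf{d}_{\ve}+\delta_0}$ sending $\xi$ to a $2$-nodal test function whose positive bump concentrates near $\xi$, and a barycentre map $\beta_{\ve}$ reading off only $u^{+}$, so that $r\circ\beta_{\ve}\circ\iota_{\ve}\simeq\mathrm{id}_M$. But to make $\iota_{\ve}$ continuous on all of $M$ you must also choose, continuously in $\xi$, a second point $f(\xi)$ with $\mathrm{dist}_g(\xi,f(\xi))\geq 2\ve R_0$ where the negative bump sits. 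Such a map $f\colon M\to M$ is, up to a small perturbation, a fixed-point-free self-map, and these do not exist on an arbitrary closed manifold (the obstruction is the Euler characteristic; for instance no such $f$ exists on $\mathbb{CP}^{2k}$). This is precisely why the paper works with the configuration space $F(M)=\{(x,y):x\neq y\}$ and its quotient $C(M)=F(M)/\mathbb{Z}_2$: the paper's map $i_{\ve}$ is defined on $F_{\ve}(M)$, not on $M$, and the centre-of-mass map $\mathbf{c}_{\ve}(u)=\bigl(\mathbf{Cm}(r,\eta)((u^+)^p),\mathbf{Cm}(r,\eta)((u^-)^p)\bigr)$ records \emph{both} nodal regions and lands in $F(M)$. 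Passing to the free $\mathbb{Z}_2$-quotient (which is also needed because solutions occur in pairs $\pm u$) and using the inclusion $C_{\ve}(M)\hookrightarrow C(M)$ being a homotopy equivalence, the paper's Proposition produces an injection on cohomology and hence $\mathrm{Cupl}\bigl((\mathcal{Z}_{\ve}\cap J_{\ve}^{\mathbf{d}_{\ve}+\delta_0})/\mathbb{Z}_2\bigr)\geq\mathrm{Cupl}\,C(M)$ --- a bound that is in fact stronger than the $\mathrm{Cupl}(M)$ you target (and which is what the later Theorem \ref{Teo2-Multip} actually uses). Your concentration analysis for point (ii) is the right ingredient (it is the content of Lemma \ref{C1} and Theorem \ref{Teo_1}), but as long as the album and the photography map are taken with domain $M$ rather than $F_{\ve}(M)$ or $C(M)$, the argument cannot close on a general closed manifold.
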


 We also obtain a multiplicity result, see  Theorem \ref{Teo2-Multip} below, with the help of the \emph{center of mass} of a function introduced by  Petean in the paper \cite{Jimmy}. This \emph{center of mass} plays the role of the \emph{barycenter map}, see for instance \cite{BMW}, in the Riemannian setting. Given the set 
 $F(M) \doteq \{(x,y)\in M \times M: x\neq y \}$, we consider the quotient space $C(M)$ of $F(M)$, under the free action $\theta(x,y)=(y,x)$, and define $\mathcal{H}^{*}$ for singular cohomology with coefficients in $\mathbb{Z}_{2}$. Recall that the \emph{cup-length} of a topological space $X$, denoted  by $\text{cupl}\, X$, is the smallest integer $k\geq 1$ such that  the cup-product of any $k$ cohomology classes in $\widetilde{\mathcal{H}}^{*}(X)$ is zero, where  $\widetilde{\mathcal{H}}^{*}$ is reduced cohomology.
 
 \begin{theorem}
\label{Teo2-Multip} (Multiplicity) There exists $\ve_{0}>0$  such that for any $\ve \in (0, \ve_0)$, problem \eqref{yam-nodal-1}  has at least $\mathrm{cupl}\, C(M)$ pairs of sign solutions $\pm u$ with $J_{\ve}(u)< d_{\ve} + k_{0}$.
\end{theorem}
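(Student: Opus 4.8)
The plan is to run a Bahri--Coron / Lusternik--Schnirelmann argument for the restriction of $J_\ve$ to the nodal Nehari set $\mathcal{E}_\ve$ of \eqref{Eep}, equipped with the free $\mathbb{Z}_2$-action $u\mapsto -u$, which interchanges $u^{+}$ and $u^{-}$ and corresponds, under the center-of-mass map, to the involution $\theta(x,y)=(y,x)$ defining $C(M)$. Each critical $\mathbb{Z}_2$-orbit of $J_\ve|_{\mathcal{E}_\ve}$ yields a pair $\pm u$ of sign-changing solutions (genuine solutions of \eqref{yam-nodal-1} because, as in Theorem~\ref{Teo1-Existence}, $\mathcal{E}_\ve$ is a natural constraint), so it suffices to bound from below the number of such orbits lying in the sublevel set $J_\ve^{\,d_\ve+k_0}$ by $\mathrm{cupl}\,C(M)$. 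I would do this by sandwiching $C(M)$ between two maps into and out of $\mathcal{E}_\ve\cap J_\ve^{\,d_\ve+k_0}$, where $d_\ve=\mathbf{d}_\ve=\inf_{\mathcal{E}_\ve}J_\ve$ and $k_0>0$ is a small fixed constant chosen below.

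First I would sharpen Theorem~\ref{Teo1-Existence} to $d_\ve = 2\mathbf{m}_\ve+o(1)$ as $\ve\to 0$: the lower bound $d_\ve\ge 2\mathbf{m}_\ve$ is already there, and the matching upper bound comes from testing $J_\ve$ on a projection onto $\mathcal{E}_\ve$ of $\Ue-U_{\ve,y}$ for fixed distinct points $x,y$, using that the interaction between two opposite-sign rescaled bubbles at $\ve$-rescaled distance $d_g(x,y)/\ve$ is exponentially small. I would then fix $k_0$ small enough that any $u\in\mathcal{E}_\ve\cap J_\ve^{\,d_\ve+k_0}$ has $J_\ve(u^{+}),J_\ve(u^{-})\le \mathbf{m}_\ve+k_0+o(1)$ strictly below $2\mathbf{m}_\ve$, so that (for $\ve$ small) each of $u^{+},u^{-}$ is, after the $\ve$-rescaling, $H^{1}$-close to a single translated standard bubble.

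Now the two maps. The first is $\Phi_\ve\colon C(M)\to \mathcal{E}_\ve\cap J_\ve^{\,d_\ve+k_0}$, sending a class $[x,y]$ with $x\neq y$ to the projection onto $\mathcal{E}_\ve$ of $\Ue-U_{\ve,y}$ with the bubbles truncated so that the positive and negative parts have disjoint supports; by the interaction estimate $J_\ve(\Phi_\ve[x,y])=2\mathbf{m}_\ve+o(1)<d_\ve+k_0$ for $\ve$ small, uniformly on compact subsets of $C(M)$, and a standard cutoff near the diagonal extends $\Phi_\ve$ to all of $C(M)$; the construction is $\mathbb{Z}_2$-equivariant. The second is the center-of-mass map of Petean \cite{Jimmy} applied componentwise, $\beta_\ve(u)=[\beta_\ve(u^{+}),\beta_\ve(u^{-})]$. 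The key point is that $\beta_\ve$ takes values in $C(M)$, i.e. $\beta_\ve(u^{+})\neq\beta_\ve(u^{-})$, for $u\in\mathcal{E}_\ve\cap J_\ve^{\,d_\ve+k_0}$ and $\ve$ small: by the previous paragraph each $u^{\pm}$ is concentrated near a single point, and since $u^{+}u^{-}=0$ these two concentration points cannot collapse onto each other without the interaction raising $J_\ve(u)$ above $d_\ve+k_0$; quantifying this (shrinking $k_0$ if necessary) gives a uniform lower bound on $d_g(\beta_\ve(u^{+}),\beta_\ve(u^{-}))$. Finally $\beta_\ve\circ\Phi_\ve$ is $C^{0}$-close to $\mathrm{id}_{C(M)}$ as $\ve\to 0$, because $\beta_\ve(\Ue)\to x$; hence it is $\mathbb{Z}_2$-equivariantly homotopic to the identity for $\ve$ small.

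The existence of $\Phi_\ve$ and $\beta_\ve$ with $\beta_\ve\circ\Phi_\ve\simeq\mathrm{id}$ forces $\Phi_\ve^{*}$ to be injective on $\widetilde{\mathcal{H}}^{*}$, so $\mathrm{cupl}(\mathcal{E}_\ve\cap J_\ve^{\,d_\ve+k_0})\ge \mathrm{cupl}\,C(M)$; feeding this into the $\mathbb{Z}_2$-equivariant deformation lemma and minimax scheme for the gradient flow of $J_\ve|_{\mathcal{E}_\ve}$ (the framework of \cite{CM}, \cite{Zou}, combined with the cup-length estimate as in \cite{BMW}) then produces at least $\mathrm{cupl}\,C(M)$ critical $\mathbb{Z}_2$-orbits of $J_\ve|_{\mathcal{E}_\ve}$ at levels below $d_\ve+k_0$, i.e. at least $\mathrm{cupl}\,C(M)$ pairs $\pm u$ solving \eqref{yam-nodal-1}. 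I expect the main obstacle to be the uniform-in-$\ve$ concentration analysis behind the well-definedness of $\beta_\ve$: one must show that any element of $\mathcal{E}_\ve$ with energy just above $2\mathbf{m}_\ve$ consists, after rescaling, of exactly two sharply peaked bumps of opposite sign with peaks at a definite distance, while controlling the lower-order curvature term $\frac{s_g}{a_{m+n}}\ve^{2}$ and the passage from $(M,g)$ to the Euclidean limit equation; this rests on delicate interaction (exponential-decay) and Palais--Smale splitting estimates for $J_\ve$. A secondary technical point is checking that a pseudo-gradient flow can be built tangent to, or leaving invariant, $\mathcal{E}_\ve$ without pushing $u^{\pm}$ to zero, but this is essentially imported from \cite{CM} and the general theory in \cite{Zou}.
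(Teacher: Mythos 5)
Your core idea — sandwich $C(M)$ between a ``bubble'' map $\Phi_\ve$ and a center-of-mass map $\beta_\ve$, show $\beta_\ve\circ\Phi_\ve\simeq\mathrm{id}$, and feed the resulting injectivity of $\Phi_\ve^*$ into an equivariant Lusternik--Schnirelmann/cup-length scheme — is exactly the skeleton of the paper's argument, and your identification of well-definedness of $\beta_\ve$ (i.e.\ $\beta_\ve(u^+)\neq\beta_\ve(u^-)$, proved in the paper as Proposition~\ref{Cmre}) as the crux is correct. But the flow-theoretic scaffolding is where you and the paper genuinely diverge, and your choice is the harder one. You propose to run the deformation argument on the nodal Nehari set $\mathcal{E}_\ve$ with a pseudo-gradient flow ``tangent to, or leaving invariant, $\mathcal{E}_\ve$,'' flagging this as a secondary point to be imported from \cite{CM} and \cite{Zou}. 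In fact, the cited framework does not build such a constrained flow: it avoids it entirely. The paper (following Bartsch--Clapp--Weth style sign-changing critical point theory) runs the unconstrained negative gradient flow $\varphi_\ve$ of $J_\ve$ on all of $H_\ve$ and defines $\mathcal{Z}_\ve=H_\ve\setminus\mathcal{A}_\ve(\mathcal{D}_\ve)$, where $\mathcal{D}_\ve$ is the union of tubular neighborhoods of the cones $\pm\mathcal{P}$ with $J_\ve^0$. Lemma~\ref{Bspi} shows $\mathcal{B}_\alpha(\ve,\pm\mathcal{P})$ is strictly positively invariant and disjoint from $\mathcal{E}_\ve$; hence $\mathcal{Z}_\ve$ is closed, positively invariant, consists of sign-changing functions, and contains every sign-changing solution. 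The category/cup-length count then happens on $\mathcal{Z}_\ve\cap J_\ve^{\mathbf{d}_\ve+\delta_0}$, not on $\mathcal{E}_\ve$. This sidesteps the nondifferentiability of $\mathcal{E}_\ve$ and the delicate issue of a flow that preserves $u^\pm\in\mathcal{N}_\ve$ without pushing $u^+$ or $u^-$ to zero; if you insist on working on $\mathcal{E}_\ve$ you must actually construct and justify such a flow, which is a real gap in your sketch, not a footnote.

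Two smaller divergences. First, for the upper bound $d_\ve\le 2\mathbf{m}(E)+o(1)$ you propose exponential-decay interaction estimates between opposite-sign bubbles; the paper's Lemma~\ref{iepc} avoids any interaction entirely by truncating $\Ue$ and $U_{\ve,y}$ to have disjoint supports (with $(x,y)\in F_\ve(M)$), after which the energies simply add. Your route works but is more machinery than needed. Second, for the key separation of the two concentration points, you propose a direct quantitative interaction bound; the paper's Proposition~\ref{Cmre} instead argues by contradiction via a blow-up/Palais--Smale-splitting analysis (rescaled weak limits $w^1>0$, $w^2<0$ forced to coincide), which is closer to the $\ve$-dependent compactness machinery actually available. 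Both can be made to work, but the paper's contradiction argument dovetails with the concentration Theorem~\ref{Teo_1} it already proves, whereas your quantitative version would require separate uniform-in-$\ve$ estimates you have not supplied. In short: the topological input and the map $\beta_\ve\circ\Phi_\ve\simeq\mathrm{id}$ are the same; the variational/flow framework is different, and your version leaves the constrained-flow construction and the quantitative concentration estimates as substantive unfinished work that the paper's $\mathcal{Z}_\ve$-based route is specifically designed to avoid.
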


 In \cite{CMar} it is proved that
 \[
 \text{cupl}\, C(M)\geq n+1,
 \]
 and
 
\begin{theorem}
\label{cuplCM} If $\mathcal{H}^{i}(M)=0$ for all $0<i <m$ and if there are $k$ cohomology classes $\xi_1,\ldots,\xi_k \in \mathcal{H}^{m}(M)$ whose cup-product is non-trivial, then
 \[
 \mathrm{cupl}\,C(M)\geq k+n.
 \]
 \end{theorem}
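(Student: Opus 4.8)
The target is a nonzero cup product of $n+k-1$ positive-degree classes in $\widetilde{\mathcal H}^{*}(C(M);\mathbb Z_2)$, which by the convention for $\mathrm{cupl}$ adopted here is precisely the statement $\mathrm{cupl}\,C(M)\ge n+k$. For $k=1$ this is the inequality $\mathrm{cupl}\,C(M)\ge n+1$ of \cite{CMar}, equivalently $w^{n}\neq0$, where $w\in\mathcal H^{1}(C(M);\mathbb Z_2)$ classifies the double cover $F(M)\to C(M)$; I would take it as the starting point. Since $\theta$ acts freely on $F(M)$ we have $C(M)=F(M)/\mathbb Z_2\simeq F(M)\times_{\mathbb Z_2}E\mathbb Z_2$, hence $\mathcal H^{*}(C(M);\mathbb Z_2)\cong\mathcal H^{*}_{\mathbb Z_2}(F(M);\mathbb Z_2)$, a module over $\mathcal H^{*}(B\mathbb Z_2;\mathbb Z_2)=\mathbb Z_2[w]$, and I view $F(M)$ as $M\times M\setminus\Delta$ with $\Delta\cong M$ the diagonal and $\mathbb Z_2$ acting by the swap. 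For each $i$ the swap-invariant class $\xi_i\otimes1+1\otimes\xi_i\in\mathcal H^{m}(M\times M;\mathbb Z_2)$ is transgressive in the Serre spectral sequence of $M\times M\to(M\times M)_{h\mathbb Z_2}\to B\mathbb Z_2$; its transgression can only lie in $\mathbb Z_2\,w^{m+1}$ and vanishes by instability of the Steenrod squares, so the class lifts to $\mathcal H^{m}_{\mathbb Z_2}(M\times M;\mathbb Z_2)$ and restricts, via $F(M)\hookrightarrow M\times M$, to a class $\zeta_i\in\mathcal H^{m}(C(M);\mathbb Z_2)$. Here the hypothesis $\mathcal H^{i}(M)=0$ for $0<i<m$ is what makes the low fibre-degree portion of this spectral sequence explicit.

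The candidate is then $w^{n}\zeta_1\cdots\zeta_{k-1}\in\mathcal H^{\,n+(k-1)m}(C(M);\mathbb Z_2)$, a product of $n+k-1$ classes; the heart of the matter is to show it is nonzero. This is a genuinely ``mixed'' class — the power $w^{n}$ is non-trivial only near $\Delta$ (it restricts non-trivially to the projectivized tangent bundle $P(TM)$, the Borel quotient of the unit normal sphere bundle of $\Delta$, whose mod-$2$ cohomology is the free $\mathcal H^{*}(M)$-module on $1,t,\dots,t^{n-1}$ with $w\mapsto t$), while the factors $\zeta_i$ carry the cohomology of $M$ coming from away from $\Delta$ — so it cannot be detected on any of the obvious subspaces, and I would instead argue by mod-$2$ Poincaré–Lefschetz duality on the open $2n$-manifold $C(M)=\mathrm{SP}^{2}(M)\setminus\Delta$: it suffices to produce $\gamma\in\mathcal H^{\,n-(k-1)m}_{c}(C(M);\mathbb Z_2)$ with $\langle w^{n}\zeta_1\cdots\zeta_{k-1}\cup\gamma,[C(M)]\rangle=1$. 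Such a $\gamma$ is assembled using the long exact sequences relating $C(M)$, $\mathrm{SP}^{2}(M)$ and $\Delta$ together with the twisted Thom isomorphism $\mathcal H^{*}(\mathrm{SP}^{2}(M),C(M);\mathbb Z_2)\cong$ (a free $\mathcal H^{*}(M;\mathbb Z_2)$-module attached to the $P(TM)$-bundle), starting from a submanifold of $M$ Poincaré-dual to a class $\eta\in\mathcal H^{\,n-km}(M;\mathbb Z_2)$ with $\xi_1\cdots\xi_k\cdot\eta$ equal to the generator of $\mathcal H^{n}(M;\mathbb Z_2)$ — such $\eta$ exists exactly because $\xi_1\cdots\xi_k\neq0$. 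Unwinding the Thom isomorphism, the pairing reduces along $\Delta$ to the product $\xi_1\cdots\xi_k\cdot\eta$ evaluated on $M$, hence equals $1$.

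The step I expect to be the main obstacle is exactly this nonvanishing computation — constructing the dual class $\gamma$ and evaluating the pairing — because of the tension just described: $w^{n}\zeta_1\cdots\zeta_{k-1}$ restricts trivially to the part of $C(M)$ near $\Delta$ (each $\zeta_i$ does, after a choice of lift) and also to the product-type loci away from $\Delta$ (the double cover is trivial there), so $\gamma$ must be chosen to be dual to a cycle that wraps the diagonal non-trivially yet is not engulfed by it, and one must track this class through the connecting homomorphisms between $C(M)$, $\mathrm{SP}^{2}(M)$ and $\Delta$ without it being annihilated. Both hypotheses feed into this: $\mathcal H^{i}(M)=0$ for $0<i<m$ keeps the groups in the relevant range (the relative cohomology of $(\mathrm{SP}^{2}(M),C(M))$, the low-filtration part of $\mathcal H^{*}_{\mathbb Z_2}(M\times M)$) explicitly computable, and $\xi_1\cdots\xi_k\neq0$ is the nondegeneracy that makes the final pairing non-zero. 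With the nonvanishing established, $\mathrm{cupl}\,C(M)\ge n+k$ follows from the definition, and for $k=1$ the argument has no factors $\zeta_i$ and reduces to $w^{n}\neq0$, i.e. to \cite{CMar}.
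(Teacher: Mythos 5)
The paper does not prove Theorem \ref{cuplCM}: it is stated, alongside the bound $\mathrm{cupl}\,C(M)\geq n+1$, as a result taken from \cite{CMar}, and no argument for it appears anywhere in the text, so there is no internal proof against which to compare your attempt.

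Evaluated on its own terms, your proposal identifies the right ingredients --- the characteristic class $w\in\mathcal{H}^1(C(M);\mathbb{Z}_2)$ of the free $\mathbb{Z}_2$-quotient $\pi\colon F(M)\to C(M)$ and lifts $\zeta_i\in\mathcal{H}^m(C(M);\mathbb{Z}_2)$ of the swap-invariant classes $\xi_i\otimes1+1\otimes\xi_i$ --- and correctly reduces the claim to exhibiting one nonzero cup product of $n+k-1$ reduced classes. A small point first: the transgression/Steenrod-square argument for the existence of $\zeta_i$ is an unnecessary detour; the mod-$2$ transfer gives $\zeta_i\doteq\pi_!(p_1^*\xi_i)$ outright, with $\pi^*\zeta_i=p_1^*\xi_i+p_2^*\xi_i$, and no hypothesis on $\mathcal{H}^i(M)$ is needed just to construct these classes. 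The substantive gap is that the nonvanishing of $w^n\zeta_1\cdots\zeta_{k-1}$ --- which is the entire content of the theorem --- is left at the level of a plan. The tension you yourself flag is genuine and makes the claim delicate: on the end of $C(M)$ near the diagonal, which retracts onto $P(TM)$, each $\zeta_i$ restricts to $\pi_!\pi^*(\bar p^{\,*}\xi_i)=2\,\bar p^{\,*}\xi_i=0$ with $\mathbb{Z}_2$ coefficients, while $w$ has bounded nilpotency away from the diagonal, so the product is invisible on either obvious piece. Your proposed remedy --- Poincar\'e--Lefschetz duality for the open $2n$-manifold $C(M)$ and a compactly supported class $\gamma$ assembled from a class $\eta$ dual to $\xi_1\cdots\xi_k$ --- is plausible in outline, but you do not actually construct $\gamma$, track it through the exact sequences for $(\mathrm{SP}^2(M),C(M),\Delta)$, or verify that the resulting pairing is $1$ rather than $0$; nor is it checked that $w^n\zeta_1\cdots\zeta_{k-1}$, rather than some other length-$(n+k-1)$ monomial in $w$ and the $\zeta_i$, is the product that survives. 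Until that computation is carried out (or replaced, say, by an equivariant-localization argument along the fixed locus $\Delta$ that detects the candidate), this is a sketch of an approach rather than a proof.
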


From Theorem \ref{cuplCM} we get that if $M=\mathbb{S}^{1}\times\cdots\times \mathbb{S}^{1}$ $n-$times, then $\text{cupl}\, C(M)=2n$.

\section{\textbf{Preliminaries}}

  Let $H_{\varepsilon}$ be the Hilbert space $H^1_g(M)$ equipped with the inner product
\begin{equation} \label{inner-prod-1}
\langle u, v\rangle_{\varepsilon} \doteq \frac{1}{\ve^n}\int_M \left(\ve^2 \left\langle \nabla_g u,   \nabla_g v  \right\rangle_g + uv \right) d \mu_g
\end{equation}
 and the induced  norm
\[
\|u\|^2_{\ve} \doteq \frac{1}{\ve^n}\int_M  \left(\ve^2 \left| \nabla_gu  \right|_g^2+   u^2  \right) d\mu_g.
\]
Consider the bilinear form  $\mathcal{L}_{\ve}:H_{\ve}\times H_{\ve}\rightarrow \R$ given by

\begin{equation}
\label{Lep}
\mathcal{L}_{\ve}(u,v)\: \doteq\frac{1}{\ve^n}\int_M \Bigg[\ve^2 \left\langle \nabla_g u,   \nabla_g v  \right\rangle_g + \left(\frac{s_g}{a_{m+n}}\ve^2 +1 \right)u v \Bigg] d \mu_g,\quad u,v\in H_{\ve}.
\end{equation}
 From \eqref{coereq} we have $\mathcal{L}_{\ve}$ is coercive, meaning that
\begin{equation}
\label{Lcoer}
c_{\varepsilon}\|u\|_{\ve}\leq \mathcal{L}_{\ve}(u,u)^{\frac{1}{2}}\leq c^{-1}_{\varepsilon}\|u\|_{\ve},\quad \forall\ u\in H_{\ve},
\end{equation}
 for some $c_{\varepsilon}>0$.   This implies that  $\mathcal{L}_{\ve}(\cdot,\cdot)$ and $\langle \cdot, \cdot\rangle_{\varepsilon}$ are equivalent inner products in $H_{\varepsilon}$. For simplicity, we set $\mathcal{L}_{\ve}(u)   \doteq \mathcal{L}_{\ve}(u,u)$,  $u\in H_{\ve}$.
 
Let $L^q_{\ve}$  be the Banach spaces $L^q_g(M)$ with the norm
\begin{equation}
\label{LpSpe}
|u|_{q,\ve} \doteq \left(\frac{1}{\ve^n}\int_M|u|^qd\mu_g\right)^{\frac{1}{q}}.
\end{equation}

 For $q\in (2,p_n )$ if $n\geq 3$ or $q > 2$ if $n=2$,  the embedding $i_{\ve} : H_{\ve} \hookrightarrow L^q_{\ve}$ is a continuous map. Moreover, one can easily check that there exists a  constant $c$  independent of $\ve$ such that
\[
|i_{\ve} (u) |_{q,\ve}\leq c\|u\|_{\ve},\quad \text{for any $u\in H_{\ve}$}.
\]
 Let $q' \doteq\frac{q}{q-1}$ so that $\frac{1}{q} + \frac{1}{q'} =1$. Notice that for  $v\in L^{q'}_{\ve}$, the map
\[
\varphi \rightarrow \langle v,i_{\ve}\left(\varphi\right)\rangle \doteq \frac{1}{\ve^n}\int_Mv\cdot i_{\ve}\left(\varphi\right)d\mu_g,\quad \varphi \in H_{\ve},
\]
 is a continuous functional by the compact embedding $i_{\ve}:H_{\ve}\hookrightarrow L^q_{\ve}$. For each $\varphi\in L^{q'}_{\ve}$, define the functional $\mathcal{F}_{\varphi}: H_{\ve}\rightarrow\R$ by
\[
\mathcal{F}_{\varphi}(v)\doteq\frac{1}{\ve^n}\int_M\varphi \cdot i_{\ve}\left(v\right)  d\mu_g ,\quad\forall\ v \in H_{\ve}.
\]
By the Lax-Milgram Theorem, there exists  $u\in H_{\ve}$ such that $\mathcal{L}_{\ve}(u,v)= \mathcal{F}_{\varphi}(v)$  for all $v\in H_{\ve}$. In other words, such function $u\in H_{\ve}$ is the weak solution of
\begin{equation}\label{eqv}
-\ve^2 \Da_gu+\left(\frac{s_g}{a_{m+n}}\varepsilon^{2}+1\right)u=  \varphi \quad\text{in } M,
\end{equation}
 where $\varphi\in L^{q'}_{\ve}$. Recall that, by elliptic regularity theory,   if $\varphi \in C^{k,\alpha} (M)$, then $u\in C^{k+2,\alpha} (M)$.

From now on we use the notation $ p\doteq p_{m+n}$. Consider the functional $J_{\ve} \colon H_{\ve} \to  \R$ given by
\[
J_{\ve} (u) \doteq
\frac{1}{\ve^{n}} \int_M \left( \frac{1}{2} \ve^2 | \nabla_g u |_g^2  +  \frac{1}{2}\left( \frac{s_g}{a_{m+n}}\ve^2 +1\right) u^2 -\frac{1}{p}  | u|^p \right) d\mu_g.
\] Its gradient is given by $\nabla J_{\ve } \colon H_{\ve } \to L(H_{\ve }, \R)$, where
\begin{eqnarray*}
 \nabla J_{\ve}(u)(v) &\doteq & \frac{1}{\ve^{n}} \int_M \left(  \varepsilon^2 \left\langle \nabla_g u,   \nabla_g v  \right\rangle_g  +
\left( \frac{s_g}{a_{m+n}}\ve^2 +1\right) uv  - | u|^{p-2}uv \right) d\mu_g\\
&=& \mathcal{L}_{\ve}(u,v)-  \frac{1}{\ve^{n}} \int_M   | u|^{p-2}uv \;  d\mu_g
\end{eqnarray*}

Consider the operator $J'_{\ve } \colon H_{\ve } \to H_{\ve } $ given by
$J'_{\ve}(u)\doteq u-K_{\ve}(u)$,  where   $K_{\ve}(u)$ is the solution of \eqref{eqv} with $\varphi = | u|^{p-2}u$ and $u\in H_{\ve}$. Then, 
\begin{equation}
\label{grdJe}
\nabla J_{\ve}(u)(v) =\mathcal{L}_{\ve}(J'_{\ve}(u),v), \quad \text{for $u,v\in H_{\ve}$}.
\end{equation}

 The Nehari manifold $\mathcal{N}_\ve$ associate to the functional $J_{\ve}$ is the following set:
\[
\mathcal{N}_{\ve} \doteq\Big\{ u\in H_{\ve}\setminus\{0\} :\mathcal{L}_{\ve}(u,u) =\bigl|u \bigr|^{p}_{p,\ve}  \Big\}.
\]

\begin{lemma}
\label{JP-S}
The functional $J_{\varepsilon}: H_{\varepsilon}\rightarrow \R$ satisfies the Palais-Smale condition. Moreover, the functional $J_{\ve}$ restricted to $\mathcal{N}_{\ve}$  is coercive.
\end{lemma}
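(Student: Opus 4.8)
The plan is to handle the two assertions separately. For the Palais--Smale condition, let $(u_k)\subset H_\ve$ satisfy $\sup_k|J_\ve(u_k)|<\infty$ and $\nabla J_\ve(u_k)\to 0$ in $L(H_\ve,\R)$; by \eqref{grdJe} and the fact that $\mathcal{L}_\ve$ is an inner product equivalent to $\langle\cdot,\cdot\rangle_\ve$, this last condition is the same as $J'_\ve(u_k)\to 0$ in $H_\ve$. The first step is to show $(u_k)$ is bounded. Using the two hypotheses together with \eqref{Lcoer},
\[
\Bigl(\tfrac12-\tfrac1p\Bigr)c_\ve^{\,2}\,\|u_k\|_\ve^2\ \le\ \Bigl(\tfrac12-\tfrac1p\Bigr)\mathcal{L}_\ve(u_k,u_k)\ =\ J_\ve(u_k)-\tfrac1p\,\nabla J_\ve(u_k)(u_k)\ \le\ C+o(1)\,\|u_k\|_\ve ,
\]
where the last inequality uses $|\nabla J_\ve(u_k)(u_k)|\le\|\nabla J_\ve(u_k)\|\,\|u_k\|_\ve$; since $p=p_{m+n}>2$, this forces $\sup_k\|u_k\|_\ve<\infty$.

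Next, passing to a subsequence, $u_k\rightharpoonup u$ in $H_\ve$. The decisive point is subcriticality: since $m\ge 1$,
\[
p=p_{m+n}=2+\frac{4}{m+n-2}\ <\ 2+\frac{4}{n-2}=p_n ,
\]
so the embedding $i_\ve\colon H_\ve\hookrightarrow L^p_\ve$ is compact, whence $u_k\to u$ strongly in $L^p_\ve$. Then $|u_k|^{p-2}u_k\to|u|^{p-2}u$ in $L^{p'}_\ve$ (continuity of the Nemytskii operator $L^p_\ve\to L^{p'}_\ve$, or an a.e.-convergent $L^p$-dominated subsequence), and since $\varphi\mapsto K_\ve(\varphi)$ is bounded linear from $L^{p'}_\ve$ into $H_\ve$ by Lax--Milgram, we get $K_\ve(u_k)\to K_\ve(u)$ in $H_\ve$. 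Writing $u_k=J'_\ve(u_k)+K_\ve(u_k)$ and recalling $J'_\ve(u_k)\to 0$, we conclude $u_k\to K_\ve(u)$ strongly in $H_\ve$; in particular $u_k\to u$ in $H_\ve$ and $u=K_\ve(u)$ is a critical point. This is the Palais--Smale condition.

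For the coercivity of $J_\ve$ on $\mathcal{N}_\ve$, the defining relation $\mathcal{L}_\ve(u,u)=|u|^p_{p,\ve}$ gives, for $u\in\mathcal{N}_\ve$,
\[
J_\ve(u)=\tfrac12\,\mathcal{L}_\ve(u,u)-\tfrac1p\,|u|^p_{p,\ve}=\Bigl(\tfrac12-\tfrac1p\Bigr)\mathcal{L}_\ve(u,u)\ \ge\ \Bigl(\tfrac12-\tfrac1p\Bigr)c_\ve^{\,2}\,\|u\|_\ve^2
\]
by \eqref{Lcoer}, and since $p>2$ the right-hand side tends to $+\infty$ as $\|u\|_\ve\to\infty$ along $\mathcal{N}_\ve$.

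The only non-routine ingredient is the compactness of $i_\ve\colon H_\ve\hookrightarrow L^p_\ve$, and this is precisely where the subcritical exponent $p_{m+n}<p_n$ (equivalently $m\ge 1$) enters; the rest is the standard Nehari/Lax--Milgram bookkeeping, with the dependence on $\ve$ harmlessly absorbed into the constant $c_\ve$ of \eqref{Lcoer}.
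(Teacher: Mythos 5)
Your proof is correct and follows essentially the same route as the paper's: establish boundedness of the Palais--Smale sequence via the coercivity of $\mathcal{L}_\ve$, pass to a weakly convergent subsequence, use the compact embedding $H_\ve\hookrightarrow L^p_\ve$ to get strong $L^p$ convergence, and then conclude via $K_\ve(u_k)\to K_\ve(u)$ and $u_k-K_\ve(u_k)\to 0$; the coercivity on $\mathcal{N}_\ve$ is the identical computation. The only cosmetic differences are that you organize the boundedness step through the standard identity $J_\ve(u_k)-\tfrac1p\nabla J_\ve(u_k)(u_k)=(\tfrac12-\tfrac1p)\mathcal{L}_\ve(u_k)$ rather than first isolating $|u_k|_{p,\ve}^p$, and you explicitly verify the subcriticality $p_{m+n}<p_n$, which the paper cites without comment.
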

\begin{proof}
Suppose that $(u_k)\subset H_{\varepsilon}$, with 
\begin{equation}
\label{Jvebdd}
(J_{\varepsilon}(u_k))\quad \text{bounded},
\end{equation}
and 
\begin{equation}
\label{Jprime}
J'_{\varepsilon}(u_k)\rightarrow 0\quad \text{in $H_{\varepsilon}$}.
\end{equation}
Recall that \eqref{Jprime} means that
\begin{equation}
\label{JT}
u_k-K(u_k)\rightarrow 0 \quad \text{in $H_{\varepsilon}$}.
\end{equation}
Hence, for every $\delta>0$ we have,
\begin{eqnarray*}
|\mathcal{L}_{\ve}(J'_{\varepsilon}(u_k),v)|&=&\Big|\mathcal{L}_{\ve}(u_k,v)-\frac{1}{\ve^{n}} \int_M   |u_k|^{p-2}u_kv \;  d\mu_g\Big| <\delta \mathcal{L}_{\ve}(v)^{\frac{1}{2}},
\end{eqnarray*}
for $k>0$ large enough and for every $v\in H_{\ve}$. If we take $v=u_k$ above we find 
\[
\Big|\mathcal{L}_{\ve}(u_k)-\frac{1}{\ve^{n}} \int_M   |u_k|^{p} \;  d\mu_g\Big| <\delta \mathcal{L}_{\ve}(u_k)^{\frac{1}{2}},
\]
for every $\delta>0$, and $k>0$ large enough. In particular, for $\delta=1$, 
\begin{equation}
\label{ukpdel}
|u|_{p,\ve}^{p}\leq \mathcal{L}_{\ve}(u_k)+\mathcal{L}_{\ve}(u_k)^{\frac{1}{2}},
\end{equation} 
for $k>0$ sufficiently large. Since \eqref{Jvebdd} says that
\[
\frac{1}{2}\mathcal{L}_{\ve}(u_k)-\frac{1}{p}|u|_{p,\ve}^{p}<C<\infty,
\]
for all $k$ and some constant $C>0$, we deduce from \eqref{ukpdel} that
\begin{align*}
\mathcal{L}_{\ve}(u_k)\leq 2C+\frac{2}{p}\left(\mathcal{L}_{\ve}(u_k)+\mathcal{L}_{\ve}(u_k)^{\frac{1}{2}}\right).
\end{align*} 

Given that $\mathcal{L}_{\ve}$ is coercive, see \eqref{Lcoer}, and that $2/p<1$, we get that $(u_k)$ is bounded in $H_{\ve}$. Hence, there exists a subsequence $(u_{k_j})$ and $u\in H_{\ve}$, with $u_{k_j}\rightharpoonup u$ weakly in $H_{\ve}$, and $u_{k_j}\rightarrow u$ in $L^{p}_{\ve}$  by the compact embedding $H_g^{1}(M)\hookrightarrow L_g^{p}(M)$. From this we get that $|u_{k_j}|^{p-2}u_{k_j} \rightarrow |u|^{p-2}u$ in $L^{p'}_{\ve}$. Therefore, $K(u_{k_j})\rightarrow K(u)$ in $H_{\ve}$. So, \eqref{JT} implies
\[
u_{k_j}\rightarrow u \quad \text{in $H_{\ve}$}.
\]

We now prove that $J_{\ve}$ restricted to $\mathcal{N}_{\ve}$  is coercive. By definition,
\[ J_{\ve}(u)=\frac{1}{2}\mathcal{L}_{\varepsilon}(u) -\dfrac{1}{p} |u|_{p ,\ve}^{p}.\]
Now, if $u\in \mathcal{N}_{\ve}$, we have $\mathcal{L}_{\ve}(u)=\bigl|u \bigr|^{p}_{p,\ve} $. So,
\[
 J_{\ve}(u)=\frac{1}{2}\mathcal{L}_{\ve}(u) -\dfrac{1}{p}\mathcal{L}_{\ve}(u)
= \Big(\frac{1}{2}-\dfrac{1}{p}\Big)\mathcal{L}_{\ve}(u)\geq \dfrac{p-2}{2p} c_{\varepsilon}\|u \|_{\ve}^2.
\]
Here we have used again that $\mathcal{L}_{\ve}$ is coercive.
\end{proof}
 
Now, if we define
\begin{equation}
\label{Sve}
S_{\ve} \doteq\inf \left\{ \dfrac{\mathcal{L}_{\ve}(u)}{|u|^{2}_{q,\ve}}:\quad u\in H_{\ve}, u\neq 0 \right\},
\end{equation}
we  get that
\begin{equation}
\label{mveSve}
\textbf{m}_{\ve}=\frac{p-2}{2p}S_{\ve}^{\frac{p}{p-2}},
\end{equation}
where  $\textbf{m}_{\ve} \doteq \inf_{\mathcal{N}_{\ve}}J_{\ve}$. Identity \eqref{mveSve}  follows from the fact that if  $u\in H_{\ve}\setminus\{0\}$, then $t_{\ve}(u)u\in \mathcal{N}_{\ve}$, where
\begin{equation}
\label{tnehari}
t^{p-2}_{\ve}(u) \doteq  \frac{\mathcal{L}_{\ve}(u)}{|u|^p_{p,\ve}}.
\end{equation}

  We close this section with the following result from \cite{Jimmy}. It is well known that  there exists a unique (up to translation) positive
finite-energy solution $U$  of the equation 
\begin{equation}
\label{limeq}
-\Delta U + U = \vert U\vert^{q-2}U \quad \text{on $\R^n$}.
\end{equation}

Moreover, the function $U$ is radial around some chosen point,  and it is exponentially decreasing at infinity
(see \cite{Gidas}):
\[
|U (x) |  \leq C e^{-c| x | },
\]
and
\[
|\nabla U (x) | \leq C  e^{-c| x | }.
\]
Consider  the functional $E: H^1 (\R^n ) \rightarrow \R$,
\[
E(f)\doteq \int_{\R^n} \left(\frac{1} {2} \| \nabla f  \|^2 + \frac{1} {2} f^2 -\frac{1}{q} \vert f\vert ^q  \right) dx,
\]
and the corresponding Nehari Manifold
\[
N(E) \doteq\left\{ u\in H^{1}(\mathbb{R}) : \int_{\R^n}\big( \| \nabla u  \|^2 +  u^2  \big)dx = \dfrac{1}{q}\int_{\R^n} | u|^q dx \right\}.
 \]
Note that $U$ is a critical point of $E$ and minimizer of the functional $E$ restricted  to $N(E)$. The minimum is then
\begin{equation}
\label{mE}
 \textbf{m}(E)\doteq\min\left\{ E(u): u\in N(E)\right\}=\dfrac{q-2}{2q}\|U\|_{q}^{q}.
\end{equation}

\begin{theorem}
\label{mep}
We have that   $\lim_{\ve\rightarrow 0} \textbf{m}_\ve=\textbf{m}(E)$, where $\textbf{m}(E)$ is given by \eqref{mE}.
\end{theorem}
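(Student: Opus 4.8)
The plan is to reduce the statement to the convergence of the corresponding Sobolev-type constants and then prove a matching upper and lower bound. Let $S_{\infty}\doteq\inf\{\int_{\R^n}(|\nabla f|^2+f^2)\,dx: f\in H^1(\R^n),\ \|f\|_q=1\}$. Since $U$ attains $S_{\infty}$ and lies on $N(E)$, one has $\int_{\R^n}(|\nabla U|^2+U^2)=\|U\|_q^q$ and $S_{\infty}=\|U\|_q^{q-2}$, hence $\|U\|_q^q=S_{\infty}^{q/(q-2)}$, so by \eqref{mE}, $\mathbf{m}(E)=\frac{q-2}{2q}S_{\infty}^{q/(q-2)}$. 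Comparing this with \eqref{mveSve} (recall $q=p=p_{m+n}$), the claim $\lim_{\ve\to0}\mathbf{m}_\ve=\mathbf{m}(E)$ is equivalent to $\lim_{\ve\to0}S_\ve=S_{\infty}$, and I would establish $\limsup_{\ve\to0}S_\ve\le S_{\infty}$ and $\liminf_{\ve\to0}\mathbf{m}_\ve\ge\mathbf{m}(E)$ separately.

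For the upper bound, fix $x_0\in M$, fix $r_0$ below the injectivity radius, pick $\chi\in C_c^\infty(B_{r_0}(0))$ with $\chi\equiv1$ on $B_{r_0/2}(0)$, and test $S_\ve$ against $\phi_\ve(z)\doteq\chi(\exp_{x_0}^{-1}z)\,U(\exp_{x_0}^{-1}(z)/\ve)$. Passing to normal coordinates and substituting $y=\exp_{x_0}^{-1}(z)/\ve$ gives $d\mu_g=\ve^n(1+O(\ve^2|y|^2))\,dy$ and $g^{ij}=\delta^{ij}+O(\ve^2|y|^2)$ on $\mathrm{supp}\,\phi_\ve$; using the exponential decay $|U|,|\nabla U|\le Ce^{-c|x|}$ to absorb both these error terms and the contribution of the cutoff region, and noting that the curvature term $\frac{s_g}{a_{m+n}}\ve^2 u^2$ only contributes $O(\ve^2)$, one finds $\mathcal{L}_\ve(\phi_\ve)\to\int_{\R^n}(|\nabla U|^2+U^2)$ and $|\phi_\ve|_{q,\ve}^2\to\|U\|_q^2$ as $\ve\to0$. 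Hence $S_\ve\le\mathcal{L}_\ve(\phi_\ve)/|\phi_\ve|_{q,\ve}^2\to S_{\infty}$, and so $\limsup_{\ve\to0}\mathbf{m}_\ve\le\mathbf{m}(E)$ by \eqref{mveSve}.

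For the lower bound I would use a blow-up argument. Since $S_\ve$ is attained (compact embedding $H_\ve\hookrightarrow L^q_\ve$), by \eqref{tnehari} there is $u_\ve\in\mathcal{N}_\ve$ with $J_\ve(u_\ve)=\mathbf{m}_\ve$, and it is a ground-state solution of \eqref{yam-nodal-1}. By the upper bound $\mathbf{m}_\ve$ is bounded, hence so is $\|u_\ve\|_\ve^2\le c_\ve^{-2}\mathcal{L}_\ve(u_\ve)=c_\ve^{-2}\frac{2p}{p-2}\mathbf{m}_\ve$ ($c_\ve\to1$), while $\mathcal{L}_\ve(u_\ve)=|u_\ve|_{p,\ve}^p\le\|u_\ve\|_\infty^{p-2}|u_\ve|_{2,\ve}^2\le c_\ve^{-2}\|u_\ve\|_\infty^{p-2}\mathcal{L}_\ve(u_\ve)$ forces $\|u_\ve\|_{L^\infty(M)}\ge c_\ve^{2/(p-2)}$. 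Choose $x_\ve$ with $|u_\ve(x_\ve)|=\|u_\ve\|_{L^\infty(M)}$ and set $v_\ve(y)\doteq u_\ve(\exp_{x_\ve}(\ve y))$ on $B(0,R_\ve)$ with $R_\ve\to\infty$; then $v_\ve$ solves $-\Delta_{g_\ve}v_\ve+c_\ve(y)v_\ve=|v_\ve|^{p-2}v_\ve$ with $g_\ve\to\delta$ in $C^\infty_{\mathrm{loc}}(\R^n)$, $c_\ve(y)=1+\frac{s_g(\exp_{x_\ve}(\ve y))}{a_{m+n}}\ve^2\to1$, and the quantities $\int_{B(0,R_\ve)}(|\nabla_{g_\ve}v_\ve|_{g_\ve}^2+v_\ve^2)\,d\mu_{g_\ve}\le\|u_\ve\|_\ve^2$ and $\int_{B(0,R_\ve)}|v_\ve|^p\,d\mu_{g_\ve}\le\mathcal{L}_\ve(u_\ve)$ are bounded uniformly in $\ve$. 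By the subcriticality $p=p_{m+n}<p_n$, a standard $L^p$ to $L^\infty$ bound controls $\|v_\ve\|_{L^\infty}$ uniformly, and then elliptic regularity gives $v_\ve\to v$ in $C^2_{\mathrm{loc}}(\R^n)$ along a subsequence, with $v$ a solution of \eqref{limeq}; $v$ is nontrivial because $|v(0)|=\lim\|u_\ve\|_{L^\infty(M)}>0$. Changing variables back, for each fixed $R$ one has $\mathcal{L}_\ve(u_\ve)\ge\int_{B(0,R)}(|\nabla_{g_\ve}v_\ve|_{g_\ve}^2+c_\ve v_\ve^2)\,d\mu_{g_\ve}\to\int_{B(0,R)}(|\nabla v|^2+v^2)$, so $v\in H^1(\R^n)$ and $\liminf_{\ve\to0}\mathcal{L}_\ve(u_\ve)\ge\int_{\R^n}(|\nabla v|^2+v^2)$. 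Being a nontrivial solution, $v\in N(E)$, so $E(v)\ge\mathbf{m}(E)$; since $\mathbf{m}_\ve=(\frac12-\frac1p)\mathcal{L}_\ve(u_\ve)$, we obtain $\liminf_{\ve\to0}\mathbf{m}_\ve\ge(\frac12-\frac1p)\int_{\R^n}(|\nabla v|^2+v^2)=E(v)\ge\mathbf{m}(E)$, which completes the argument.

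I expect the lower bound to be the main obstacle, and inside it the uniform two-sided control of $\|u_\ve\|_{L^\infty(M)}$: the lower bound (ensuring genuine concentration at scale $\ve$) is elementary as above, but the upper bound — needed so that the rescaled family converges to a nontrivial finite-energy solution rather than degenerating or spreading out — rests on the subcriticality $p_{m+n}<p_n$ (i.e. $m\ge1$) via an $L^p$ to $L^\infty$ bootstrap for the rescaled equation, using that $\mathbf{m}_\ve$ has already been bounded. The possibility that $u_\ve$ develops several concentration profiles is harmless for this direction, since a lower bound on $\mathbf{m}_\ve$ only needs to capture the energy of one profile; the reverse inequality is supplied cheaply by the test-function estimate.
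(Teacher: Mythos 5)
The paper itself does not prove this statement: it is quoted from Petean \cite{Jimmy}, who in turn builds on estimates from Benci--Bonanno--Micheletti \cite{BBM} (the present paper invokes exactly that source for the upper-bound half in Lemma \ref{iepc}). Your two-sided argument --- the rescaled bubble $\phi_\ve$ as a test function to get $\limsup_{\ve\to 0}S_\ve\le S_\infty$, combined with a blow-up of a Nehari ground state $u_\ve$ of $J_\ve$ around its maximum point to get $\liminf_{\ve\to 0}\mathbf{m}_\ve\ge\mathbf{m}(E)$ --- is precisely the standard concentration proof used in those sources, and I find it correct.

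Two places should be made explicit rather than waved at. First, the uniform $L^\infty$ bound on the rescaled $v_\ve$: on a fixed ball $B(0,R)$ one has a uniform $H^1$ bound, hence a uniform $L^{2^*}$ bound, and the bootstrap $1/r_{k+1}=(p-1)/r_k-2/n$ reaches a nonpositive value (giving $W^{2,r}$ with $r>n/2$, hence $L^\infty$) in finitely many steps exactly because $r_0=2^*>n(p-2)/2$, which is equivalent to $p=p_{m+n}<p_n$. The elliptic and Sobolev constants entering the iteration can be taken uniform in $\ve$ since $g_\ve\to\delta$ in $C^\infty_{\mathrm{loc}}(\R^n)$. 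Second, to conclude $E(v)\ge\mathbf{m}(E)$ you use $v\in N(E)$; this needs a line: the weak limit $v$ lies in $H^1(\R^n)$ (so $v\in L^p$ by subcriticality) and satisfies the limit equation weakly, so testing against $\phi_R v$ with a cutoff $\phi_R$ and letting $R\to\infty$ gives $\int_{\R^n}(|\nabla v|^2+v^2)\,dx=\int_{\R^n}|v|^p\,dx$. With those two sentences supplied, the proof is complete and matches the cited one in substance.
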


\section{\textbf{Existence Of Nodal Solutions}}

 Recall that for $u\in H_{\ve}$,  $J'_{\ve}(u) \doteq  u-K_{\ve}(u)$,  where   $K_{\ve}(u)$ is the solution of \eqref{eqv} with $\varphi = | u|^{p-2}u$, is the gradient of $J_{\ve}$ with respect to
 the inner product  $\mathcal{L}_{\varepsilon}(\cdot, \cdot )$. Consider the  negative gradient flow
 $\varphi_{\ve}: \mathcal{G_{\ve}}\to H_{\ve} $  defined by
\[
\begin{cases}
\dfrac{d}{dt}\varphi_{\ve}(t,u) =  - J'_{\ve}(\varphi_{\ve}(t,u)), \\
\varphi_{\ve}(0,u)=u,
\end{cases}
\]
where $\mathcal{G_{\ve}} = \{ (t, u) : u \in H_g^1(M), \; 0 \leq t \leq T^{\ve}(u) \} $ and $T^{\ve}(u) \in (0, + \infty)$ is  the maximal existence time for $\varphi_{\ve}$.

\begin{definition}
A set $\mathcal{D} \subset  H_g^1(M)$ is \emph{strictly positively invariant} under the  flow $\varphi_{\ve}$,  if
for every $u \in \mathcal{D}   $ and $t \in  (0, T^{\ve}(u))$,  $\varphi_{\ve} (t, u) \in \overset{\circ}{\mathcal{D}}$, where
$\overset{\circ}{\mathcal{D}}$ denotes the interior of $\mathcal{D}$ in   $H_{\ve}$.
\end{definition}

If $\mathcal{D}$ is strictly positively invariant under the flow $\varphi_{\ve}$, the set
\[ 
\mathcal{A}_\ve(\mathcal{D}) \doteq \{u\in H_{g}^{1}(M) : \varphi_{\ve}(t,u)\in\mathcal{D} \text { for some } t\in(0, T^{\ve}(u)) \}
\]
is an open subset of $H^1_g(M)$. We define the  convex cone of non-negative  functions by
$\mathcal{P}  \doteq \{u\in H_{\ve}: u\geq 0\}$.
For $\alpha > 0$ define also the tubular neighborhood
\[
\mathcal{B}_{\alpha}(\ve, \pm \mathcal{P}) \doteq\Big\{u\in H_{\ve}: \text{dist}_{\ve}(u,\pm \mathcal{P})\leq \alpha \Big\},
\]
where
\[
\text{dist}_{\ve}(u,\pm \mathcal{P}) \doteq \min_{v \in \pm\mathcal{P}}\mathcal{L}_{\ve}(u-v,u-v)^{\frac{1}{2}}.
\]
 For $a\in \mathbb{R}$, we consider the set
$J_{\ve}^{a}\doteq J_{\ve}^{-1}((-\infty, a])=\{u\in H_{\ve}: J_{\ve}(u)\leq a\}$.
 Moreover, for $\ve > 0$ we let
\[
\mathcal{D}_{\ve} \doteq \mathcal{B}_{\alpha}(\ve, \mathcal{P})\cup \mathcal{B}_{\alpha}(\ve, -\mathcal{P})\cup J_{\ve}^{0},
\]
and
\begin{equation}
\label{defZep}
\mathcal{Z}_{\ve}\doteq  H_{\ve}\setminus \mathcal{A}_{\ve}(\mathcal{D}_{\ve}).
\end{equation}

Our  first result is the following lemma. 

\begin{lemma}
\label{Bspi}
If $\alpha \doteq \frac{1}{2}S_{\ve}^{p/2(p-2)} $,  then
\begin{enumerate}
    \item $\left(\mathcal{B}_{\alpha}(\ve, \mathcal{P}\right)\cup \mathcal{B}_{\alpha}(\ve, -\mathcal{P}))\cap \mathcal{E}_{\ve}=\emptyset$;
    \item $ {B}_{\alpha}(\ve, \pm \mathcal{P})$ is strictly positive invariant for the flow $\varphi_{\ve}.$
\end{enumerate}

\end{lemma}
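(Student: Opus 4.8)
The plan is to establish the two assertions separately, both resting on the sharp Sobolev-type constant $S_\ve$ and on the fact that points near the cones $\pm\mathcal P$ have small negative and positive parts.

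\medskip

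\noindent\emph{Step 1: the quantitative estimate relating $\mathrm{dist}_\ve(u,\pm\mathcal P)$ to $|u^\mp|_{p,\ve}$.}
First I would record the elementary but crucial observation that for $u\in H_\ve$ the minimum in $\mathrm{dist}_\ve(u,\mathcal P)$ is attained at $v=u^+$, so that $\mathrm{dist}_\ve(u,\mathcal P)=\mathcal L_\ve(u^-,u^-)^{1/2}$, and symmetrically $\mathrm{dist}_\ve(u,-\mathcal P)=\mathcal L_\ve(u^+,u^+)^{1/2}$. (This uses that $\mathcal L_\ve(u^+,u^-)\le 0$ by the pointwise disjointness of the supports of the positive and negative parts, together with $\ve^2\langle\nabla_g u^+,\nabla_g u^-\rangle_g=0$ a.e.) Combining this with the definition of $S_\ve$ in \eqref{Sve} applied to $u^-$ (note $q=p$ here) gives $S_\ve\,|u^-|_{p,\ve}^2\le\mathcal L_\ve(u^-)=\mathrm{dist}_\ve(u,\mathcal P)^2$, and likewise for $u^+$ near $-\mathcal P$.

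\medskip

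\noindent\emph{Step 2: part (1).}
Suppose, for contradiction, that $u\in\mathcal B_\alpha(\ve,\mathcal P)\cap\mathcal E_\ve$ (the case of $-\mathcal P$ is symmetric). Since $u\in\mathcal E_\ve$ we have $u^-\in\mathcal N_\ve$, hence $u^-\neq 0$ and $\mathcal L_\ve(u^-)=|u^-|_{p,\ve}^p$. Feeding this into the definition of $S_\ve$ yields $S_\ve\le \mathcal L_\ve(u^-)/|u^-|_{p,\ve}^2=|u^-|_{p,\ve}^{p-2}$, i.e. $|u^-|_{p,\ve}^2\ge S_\ve^{2/(p-2)}$, and therefore by Step 1
\[
\mathrm{dist}_\ve(u,\mathcal P)^2=\mathcal L_\ve(u^-)=|u^-|_{p,\ve}^p\ge S_\ve^{p/(p-2)}.
\]
But $u\in\mathcal B_\alpha(\ve,\mathcal P)$ means $\mathrm{dist}_\ve(u,\mathcal P)\le\alpha=\tfrac12 S_\ve^{p/2(p-2)}$, so $\mathrm{dist}_\ve(u,\mathcal P)^2\le\tfrac14 S_\ve^{p/(p-2)}<S_\ve^{p/(p-2)}$, a contradiction. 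Hence $\mathcal B_\alpha(\ve,\pm\mathcal P)\cap\mathcal E_\ve=\emptyset$.

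\medskip

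\noindent\emph{Step 3: part (2), strict positive invariance.}
Here the idea is to show the distance to $\mathcal P$ strictly decreases along the flow while $u$ stays in the tube. Using $J'_\ve(u)=u-K_\ve(u)$ and the characterization $\mathrm{dist}_\ve(u,\mathcal P)=\mathcal L_\ve(u^-)^{1/2}$, I would differentiate $t\mapsto \tfrac12\mathrm{dist}_\ve(\varphi_\ve(t,u),\mathcal P)^2$ and bound it above by $\mathcal L_\ve(u^-, -J'_\ve(u))\le -\mathcal L_\ve(u^-)+ \mathcal L_\ve(u^-,K_\ve(u))$; the key point is that $K_\ve$ is positivity-preserving (if $\varphi\ge0$ then the solution of \eqref{eqv} is $\ge0$, by the maximum principle using \eqref{coereq}), so writing $K_\ve(u)=K_\ve(u^+)-R$ with $R\doteq K_\ve(u^+)-K_\ve(u)\ge K_\ve(u^+)-K_\ve(u^+ )\ge 0$ when... more carefully, $|u|^{p-2}u\le |u^+|^{p-1}$ pointwise, so $K_\ve(u)\le K_\ve(u^+)$, and $K_\ve(u^+)\ge0$; one then estimates $\mathcal L_\ve(u^-,K_\ve(u))\le$ (a term controlled by $|u^-|_{p,\ve}$ times $\mathrm{dist}_\ve(u,\mathcal P)^{p-1}$), which by Step 1 and the smallness $\alpha=\tfrac12 S_\ve^{p/2(p-2)}$ is strictly smaller than $\mathcal L_\ve(u^-)$ unless $u^-=0$. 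This gives $\tfrac{d}{dt}\mathrm{dist}_\ve(\varphi_\ve(t,u),\mathcal P)^2<0$ whenever $0<\mathrm{dist}_\ve(\varphi_\ve(t,u),\mathcal P)\le\alpha$, so the flow immediately enters the interior $\{\mathrm{dist}_\ve(\cdot,\mathcal P)<\alpha\}$ of $\mathcal B_\alpha(\ve,\mathcal P)$ and stays there; for $u\in\mathcal P$ itself one notes $K_\ve(u)\in\mathcal P$ forces $\varphi_\ve(t,u)\in\mathcal P\subset\overset{\circ}{\mathcal B_\alpha(\ve,\mathcal P)}$ as well.

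\medskip

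\noindent The main obstacle is Step 3: one must handle the non-differentiability of $u\mapsto u^-$ carefully (use that $t\mapsto\mathrm{dist}_\ve(\varphi_\ve(t,u),\mathcal P)$ is locally Lipschitz and compute an upper Dini derivative, comparing $\varphi_\ve(t,u)$ with the fixed competitor $\varphi_\ve(s,u)^+$), and one must extract the sharp constant so that the ``bad'' term $\mathcal L_\ve(u^-,K_\ve(u))$ is genuinely dominated by $\mathcal L_\ve(u^-)$ on the whole tube of radius $\alpha$ — this is exactly why the specific value $\alpha=\tfrac12 S_\ve^{p/2(p-2)}$ is chosen, via the chain $\mathcal L_\ve(u^-,K_\ve(u))\le |u^-|_{p',\ve}\,|u|^{p-1}\cdots\le c\,S_\ve^{-1}\mathrm{dist}_\ve(u,\mathcal P)^{p-2}\cdot\mathcal L_\ve(u^-)$ and $\mathrm{dist}_\ve(u,\mathcal P)^{p-2}\le\alpha^{p-2}<$ the required threshold. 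The positivity-preservation of $K_\ve$ from the maximum principle (valid by the coercivity hypothesis \eqref{coereq}) is the other ingredient that must be invoked explicitly.
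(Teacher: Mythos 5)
Your Step 1 contains a genuine error that propagates through the rest of the argument. You assert that the minimum defining $\mathrm{dist}_\ve(u,\mathcal P)$ is attained at $v=u^+$, i.e.\ that $\mathrm{dist}_\ve(u,\mathcal P)=\mathcal L_\ve(u^-)^{1/2}$. This is false: the metric projection of $u$ onto the cone $\mathcal P$ of nonnegative functions in the $\mathcal L_\ve$ inner product is the solution of an obstacle problem, not $u^+$. Indeed, one would need $\mathcal L_\ve(u^-,w)\ge 0$ for every $w\in\mathcal P$ with support meeting $\{u<0\}$, but the gradient term $\frac{\ve^2}{\ve^n}\int_M\langle\nabla_g u^-,\nabla_g w\rangle_g$ has no sign and can be made negative (take $w\ge 0$ supported where $\Delta_g u^->0$). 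What is true is only the one-sided bound $\mathrm{dist}_\ve(u,\mathcal P)\le\mathcal L_\ve(u^-)^{1/2}$, obtained by testing with $v=u^+$. This is the wrong direction for your Step 2: after invoking $u^-\in\mathcal N_\ve$ you correctly deduce $\mathcal L_\ve(u^-)\ge S_\ve^{p/(p-2)}$, but combined with the true inequality $\mathrm{dist}_\ve(u,\mathcal P)^2\le\mathcal L_\ve(u^-)$ this gives no contradiction with $\mathrm{dist}_\ve(u,\mathcal P)\le\alpha$.

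The inequality you actually need, $|u^-|_{p,\ve}\le S_\ve^{-1/2}\mathrm{dist}_\ve(u,\mathcal P)$, is correct but must be derived differently, via the \emph{$L^p$} projection rather than the $\mathcal L_\ve$ one: for $v\in\mathcal P$ one has $|u-v|\ge u^-$ pointwise, hence $|u^-|_{p,\ve}=\min_{v\in\mathcal P}|u-v|_{p,\ve}$, and then for each such $v$ the definition of $S_\ve$ gives $|u-v|_{p,\ve}\le S_\ve^{-1/2}\mathcal L_\ve(u-v)^{1/2}$; taking the infimum over $v\in\mathcal P$ yields the estimate. (This is exactly the paper's opening step.) Once this is in place, your Step 2 is correct and essentially the paper's proof of part (1). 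For part (2), your outline again leans on the false identity and on a direct differentiation of $t\mapsto\mathrm{dist}_\ve(\varphi_\ve(t,u),\mathcal P)^2$, whose non-smoothness you acknowledge. The paper avoids both issues: it bounds $\mathrm{dist}_\ve(K_\ve(u),\mathcal P)$ directly by testing the weak equation for $K_\ve(u)$ against $K_\ve(u)^-$, applying H\"older and the corrected $S_\ve$-estimate to obtain $\mathrm{dist}_\ve(K_\ve(u),\mathcal P)\le 2^{-(p-1)}S_\ve^{p/2(p-2)}<\alpha$, so $K_\ve(u)$ lies in the interior of the tube; convexity of $\mathcal B_\alpha(\ve,\mathcal P)$ then gives the sub-tangent condition on $-J'_\ve(u)=K_\ve(u)-u$, and positive invariance follows from Theorem 1.49 of Zou together with a convexity argument for strictness. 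This route never needs positivity-preservation of $K_\ve$ or a chain rule for $u\mapsto u^-$.
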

\begin{proof}

\begin{enumerate}
\item  First, note that
\begin{eqnarray}
\label{umdis}
\vert u^{-}\vert_{p,\ve}=\min_{v\in \mathcal{P}}\vert u-v\vert_{p,\ve}&\leq & S_{\ve}^{-1/2}\min_{v\in\mathcal{P}}\mathcal{L}_{\ve}(u-v,u-v)^{\frac{1}{2}}\\
\nonumber
&=& S_{\ve}^{-1/2} \text{dist}_{\ve}(u, \mathcal{P}).
\end{eqnarray}
Then, if $u\in \mathcal{E}_{\ve}\cap \mathcal{B}_{\alpha}(\ve, \mathcal{P})$, 
\[
0< S^{p/p-2}_{\ve}\leq \mathcal{L}_{\ve}(u^-)=|u^-|^p_{p,\ve}\leq S_{\ve}^{-p/2} \text{dist}_{\ve}(u, \mathcal{P})^p\leq \frac{1}{2^p}S^{p/p-2}_{\ve}.
\]
This contradiction gives us that $\mathcal{B}_{\alpha}(\ve, \mathcal{P})\cap \mathcal{E}_{\ve}=\emptyset$. In similar fashion, $\mathcal{B}_{\alpha}(\ve, -\mathcal{P})\cap \mathcal{E}_{\ve}=\emptyset$. Hence, (1) is established.

\item   We prove the assertion for  $\mathcal{B}_{\alpha}(\ve,  \mathcal{P}) $. We first show that
if  $u\in \mathcal{B}_{\alpha}(\ve, \pm \mathcal{P}) $, then $K_{\ve}(u)$  is in the interior of
$\mathcal{B}_{\alpha}(\ve, \pm \mathcal{P})$. 
Observe that
\begin{eqnarray*}
  \text{dist}_{\ve}(K_{\ve}(u), \mathcal{P}) \mathcal{L}_{\ve}(K_{\ve}(u)^{-})^{\frac{1}{2}}  
&\leq & \mathcal{L}_{\ve}(K_{\ve}(u)^{-},K_{\ve}(u)^{-}) \\
&=& \frac{1}{\ve^{n}} \int_M   | u|^{p-2}u  K_{\ve}(u)^-\;  d\mu_g \;\;\; \\
&\leq& |u^-|_p^{p-1} \left| K_{\ve}(u)^{-}\right|_p\\
&\leq& S_{\ve}^{-p/2} \text{dist}_{\ve}(u, \mathcal{P})^{p-1}\mathcal{L}_{\ve}(K_{\ve}(u)^{-})^{\frac{1}{2}}\quad \text{(by \eqref{umdis})}\\
&\leq& S_{\ve}^{-p/2} \left(\frac{1}{2}S_{\ve}^{p/2(p-2)}\right)^{p-1}\mathcal{L}_{\ve}(K_{\ve}(u)^{-})^{\frac{1}{2}}\\
&=& \frac{1}{2^{p-1}} S_{\ve}^{p/2(p-2)} \mathcal{L}_{\ve}(K_{\ve}(u)^{-})^{\frac{1}{2}}.
\end{eqnarray*}
Hence,
\[
\text{dist}_{\ve}(K_{\ve}(u), \mathcal{P}) \leq  \frac{1}{2^{p-1}} S_{\ve}^{p/2(p-2)}.
\]
 It follows that $K_{\ve}(u)$ is in the interior of $\mathcal{B}_{\alpha}(\ve, \mathcal{P})$. Given that the set $\mathcal{B}_{\alpha}(\ve, \mathcal{P})$ is convex, we get that 
 \begin{equation}
 \label{ulmK}
 u-\lambda (J'_{\ve}(u))=(1-\lambda)u-\lambda K_{\ve}(u)\in \mathcal{B}_{\alpha}(\ve, \mathcal{P})
\end{equation}  
for all $u\in \mathcal{B}_{\alpha}(\ve, \mathcal{P})$ and $\lambda\in [0,1]$. Then, we get from \eqref{ulmK} that
\begin{equation}
\label{distuB}
\lim_{\lambda\rightarrow 0^+}\frac{\mathrm{dist}\,\left(u+\lambda (-J'_{\ve}(u))), \mathcal{B}_{\alpha}(\ve, \mathcal{P})\right)}{\lambda}=0, \quad \text{for every $u\in \mathcal{B}_{\alpha}(\ve, \mathcal{P})$}.
\end{equation} 
Hence, using \eqref{distuB}, we get from Theorem 1.49 in \cite{Zou} that
\begin{equation}
\label{phiBeP}
\varphi_{\varepsilon}(u,t)\in \mathcal{B}_{\alpha}(\ve, \mathcal{P}),\quad \text{for every $u\in \mathcal{B}_{\alpha}(\ve, \mathcal{P}),\, 0\leq t<T_{\ve}(u)$}.
\end{equation}
Finally, using a convexity-type argument as in Proposition 3.1 in \cite{BMW}, we get from \eqref{phiBeP} that $\varphi_{\varepsilon}(u,t)\in \mathrm{int}\,\mathcal{B}_{\alpha}(\ve, \mathcal{P})$,  for every $u\in \mathcal{B}_{\alpha}(\ve, \mathcal{P})$ and $ 0< t<T_{\ve}(u)$.
\end{enumerate}
\end{proof}

\begin{remark} We have that  $\inf_{\mathcal{E}_{\ve}}J_{\ve}$ is  attained and any minimizer of $J_{\ve}$ on $\mathcal{E}_{\ve}$ is a sign changing solution to Eq. \eqref{yam-nodal-1}. Hence, we set
\begin{equation}
\label{depsilon}
\mathbf{d}_{\ve} \doteq  \inf_{\mathcal{E}_{\ve}}J_{\ve}.
\end{equation}
By Lemma \ref{Bspi}, we have that $\mathcal{D}_{\ve}$ is strictly positive invariant for
the flow  $\varphi_{\ve}$. Therefore, the set $\mathcal{Z}_{\ve}$ is a closed subset of $H_{\ve}$. Moreover, every function in $\mathcal{Z}_{\ve}$ is sign changing and every sign changing solution
for Eq. \eqref{yam-nodal-1}  lies in $\mathcal{Z}_{\ve}$. Therefore, 
\[
\mathbf{d}_{\ve}\geq \inf_{\mathcal{Z}_{\ve}}J_{\ve}. 
\]
\end{remark}

\begin{lemma}[Ekeland's variational principle]
\label{ekeland}
  Given $\ve >0$, $\delta>0$ and $u\in \mathcal{Z}_\ve$ such that 
  $J_\ve(u)\leq \inf_{\mathcal{Z}_{\ve}}J_{\ve}+ \delta$, there exists $v\in \mathcal{Z}_\ve$ such that $J_\ve (v)\leq J_\ve (u)$, $\mathcal{L}_\ve(u-v)^{\frac{1}{2}} \leq \sqrt{\delta}$ and $\mathcal{L}_\ve(J'_{\ve}(v))^{\frac{1}{2}} \leq \sqrt{\delta}$.
\end{lemma}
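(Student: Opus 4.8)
The plan is to apply Ekeland's variational principle in its abstract form to $J_{\ve}$ on the complete metric space $(\mathcal{Z}_{\ve}, d_{\ve})$, where $d_{\ve}(u,w)\doteq\mathcal{L}_{\ve}(u-w)^{1/2}$, and then to convert the resulting $\epsilon$-criticality relative to the constraint $\mathcal{Z}_{\ve}$ into the bound on the full gradient $J'_{\ve}$ by exploiting the invariance of $\mathcal{Z}_{\ve}$ under the negative gradient flow $\varphi_{\ve}$.

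First I would verify the hypotheses of the abstract principle. By the Remark following Lemma \ref{Bspi}, $\mathcal{D}_{\ve}$ is strictly positively invariant under $\varphi_{\ve}$, so $\mathcal{A}_{\ve}(\mathcal{D}_{\ve})$ is open and $\mathcal{Z}_{\ve}=H_{\ve}\setminus\mathcal{A}_{\ve}(\mathcal{D}_{\ve})$ is closed; since $H_{\ve}$ is complete and, by \eqref{Lcoer}, $d_{\ve}$ is equivalent to the metric induced by $\|\cdot\|_{\ve}$, the space $(\mathcal{Z}_{\ve},d_{\ve})$ is complete. The functional $J_{\ve}$ is $C^1$, hence continuous and lower semicontinuous on $\mathcal{Z}_{\ve}$. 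Moreover, strict positive invariance gives $\mathcal{D}_{\ve}\subset\mathcal{A}_{\ve}(\mathcal{D}_{\ve})$, so $J_{\ve}^0\subset\mathcal{D}_{\ve}\subset\mathcal{A}_{\ve}(\mathcal{D}_{\ve})$ and therefore $\mathcal{Z}_{\ve}\cap J_{\ve}^0=\emptyset$; thus $J_{\ve}>0$ on $\mathcal{Z}_{\ve}$ and in particular $\inf_{\mathcal{Z}_{\ve}}J_{\ve}>-\infty$. Applying Ekeland's principle with $\epsilon=\delta$ and step $\lambda=\sqrt{\delta}$ (so that $\epsilon/\lambda=\sqrt{\delta}$) yields $v\in\mathcal{Z}_{\ve}$ with $J_{\ve}(v)\leq J_{\ve}(u)$, $d_{\ve}(u,v)\leq\sqrt{\delta}$, and $J_{\ve}(w)\geq J_{\ve}(v)-\sqrt{\delta}\,d_{\ve}(v,w)$ for every $w\in\mathcal{Z}_{\ve}$. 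This already gives the first two conclusions.

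The main step is to deduce $\mathcal{L}_{\ve}(J'_{\ve}(v))^{1/2}\leq\sqrt{\delta}$. Here I would first note that $\mathcal{Z}_{\ve}$ is positively invariant under $\varphi_{\ve}$: if $\varphi_{\ve}(s,\varphi_{\ve}(t,v))\in\mathcal{D}_{\ve}$ for some $s>0$, the semigroup property gives $\varphi_{\ve}(t+s,v)\in\mathcal{D}_{\ve}$, contradicting $v\notin\mathcal{A}_{\ve}(\mathcal{D}_{\ve})$; hence $\varphi_{\ve}(t,v)\in\mathcal{Z}_{\ve}$ for all $t\in[0,T^{\ve}(v))$. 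Taking $w=\varphi_{\ve}(t,v)$ in the Ekeland inequality, using $\frac{d}{ds}J_{\ve}(\varphi_{\ve}(s,v))=-\mathcal{L}_{\ve}(J'_{\ve}(\varphi_{\ve}(s,v)))$ (from \eqref{grdJe}) and $v-\varphi_{\ve}(t,v)=\int_0^t J'_{\ve}(\varphi_{\ve}(s,v))\,ds$ together with the triangle inequality for the $\mathcal{L}_{\ve}^{1/2}$-norm, I obtain, for small $t>0$,
\[
\int_0^t\mathcal{L}_{\ve}(J'_{\ve}(\varphi_{\ve}(s,v)))\,ds \;=\; J_{\ve}(v)-J_{\ve}(\varphi_{\ve}(t,v)) \;\leq\; \sqrt{\delta}\,\mathcal{L}_{\ve}(v-\varphi_{\ve}(t,v))^{1/2} \;\leq\; \sqrt{\delta}\int_0^t\mathcal{L}_{\ve}(J'_{\ve}(\varphi_{\ve}(s,v)))^{1/2}\,ds .
\]
Dividing by $t$, letting $t\to 0^+$, and using continuity of $s\mapsto\mathcal{L}_{\ve}(J'_{\ve}(\varphi_{\ve}(s,v)))$ at $s=0$ (which follows from the $C^1$ dependence of $\varphi_{\ve}$ on $s$ and continuity of $J'_{\ve}$ on $H_{\ve}$), I get $\mathcal{L}_{\ve}(J'_{\ve}(v))\leq\sqrt{\delta}\,\mathcal{L}_{\ve}(J'_{\ve}(v))^{1/2}$, hence $\mathcal{L}_{\ve}(J'_{\ve}(v))^{1/2}\leq\sqrt{\delta}$.

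I expect the hard part to be precisely this last paragraph: Ekeland's principle only supplies an $\epsilon$-critical point with respect to the closed set $\mathcal{Z}_{\ve}$, and the point is that $\mathcal{Z}_{\ve}$ is stable enough — invariant under $\varphi_{\ve}$ — that the descent direction $-J'_{\ve}(v)$, integrated along the flow, remains admissible, so the $\epsilon$-critical inequality can be probed in that direction. If $\mathcal{Z}_{\ve}$ were merely closed and not flow-invariant, one would only control a ``tangential'' component of $J'_{\ve}(v)$, which would not suffice. The remaining technical ingredient, the regularity of $s\mapsto\mathcal{L}_{\ve}(J'_{\ve}(\varphi_{\ve}(s,v)))$ near $s=0$, is routine from standard Banach-space ODE theory for $\varphi_{\ve}$ together with the continuity of $J'_{\ve}$.
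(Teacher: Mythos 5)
Your proof is correct, but it takes a genuinely different route from the paper's.

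The paper proves the lemma directly with a flow argument and never invokes the abstract Ekeland principle, despite the lemma's name. It flows from $u$ itself, sets $t_0$ to be the first time the flow has traveled $\mathcal{L}_{\ve}^{1/2}$-distance $\sqrt{\delta}$ from $u$, and assumes for contradiction that $\mathcal{L}_{\ve}(J'_{\ve}(\varphi_{\ve}(t,u)))^{1/2}>\sqrt{\delta}$ on all of $(0,t_0)$; then the chain $\sqrt{\delta}=\mathcal{L}_\ve(\varphi_\ve(t_0,u)-u)^{1/2}\leq\int_0^{t_0}\mathcal{L}_\ve(J'_\ve(\varphi_\ve))^{1/2}\leq\frac{1}{\sqrt{\delta}}\int_0^{t_0}\mathcal{L}_\ve(J'_\ve(\varphi_\ve))=\frac{1}{\sqrt{\delta}}(J_\ve(u)-J_\ve(\varphi_\ve(t_0,u)))\leq\sqrt{\delta}$ forces a contradiction, and the desired $v$ is taken of the form $\varphi_\ve(t,u)$ on the orbit of $u$. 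Your proof instead first invokes the abstract Ekeland principle on $(\mathcal{Z}_\ve,d_\ve)$ to produce a $v$ that is $\sqrt{\delta}$-critical relative to the constraint, and then uses the flow \emph{at $v$} (not at $u$) together with positive invariance of $\mathcal{Z}_\ve$ and a $t\to 0^+$ limit to upgrade the constrained $\varepsilon$-criticality to the full gradient bound. Both proofs ultimately hinge on the same two ingredients — the identity $\frac{d}{dt}J_\ve(\varphi_\ve)= -\mathcal{L}_\ve(J'_\ve(\varphi_\ve))$ and positive invariance of $\mathcal{Z}_\ve$ under $\varphi_\ve$ — but they deploy them in opposite order. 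The paper's version is self-contained and constructs $v$ explicitly on the orbit of $u$; yours is more modular and makes the role of flow-invariance of the constraint set more transparent (as you correctly note, a merely closed $\mathcal{Z}_\ve$ would only yield a tangential gradient bound), at the cost of having to check the hypotheses of abstract Ekeland (completeness of $\mathcal{Z}_\ve$, $J_\ve$ bounded below there), which you do correctly via $J_\ve^0\subset\mathcal{D}_\ve\subset\mathcal{A}_\ve(\mathcal{D}_\ve)$. One small point worth keeping: your explicit semigroup argument for positive invariance of $\mathcal{Z}_\ve$ is something the paper's proof uses implicitly (in the step $\inf_{\mathcal{Z}_\ve}J_\ve\leq J_\ve(\varphi_\ve(t_0,u))$) without spelling it out.
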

\begin{proof}
  Let $t_0 \doteq \inf\Big\{ t>0:  \sqrt{\delta}\leq \mathcal{L}_\ve(\varphi_{\ve}(t,u)-\varphi_{\ve}(0,u))^{\frac{1}{2}} \Big\} \in (0,\infty] $.  Suppose  that  $\sqrt{\delta} < \mathcal{L}_\ve( J'_{\ve}(\varphi_{\ve}(t,u)))^{\frac{1}{2}} \quad \text { for all } t\in (0,t_0).$ This implies,
    \[ 
\mathcal{L}_\ve( J'_{\ve}(\varphi_{\ve}(t,u)))^{\frac{1}{2}}  \leq \dfrac{1}{\sqrt{\delta}}\mathcal{L}_\ve( J'_{\ve}(\varphi_{\ve}(t,u)))  \text { for all } t\in (0,t_0).
    \]
Hence,
\begin{align*}
\sqrt{\delta}=&\mathcal{L}_\ve(\varphi_{\ve}(t_0,u)-\varphi_\ve(0,u))^{\frac{1}{2}}=\mathcal{L}_\ve\left(\int_{0}^{t_0}\dfrac{d}{dt}\varphi_{\ve}(t,u)dt\right)^{\frac{1}{2}}\\
    =&\mathcal{L}_{\ve}\left( \int_{0}^{t_0} - J'_{\ve}(\varphi_{\ve}(t,u))\right)^{\frac{1}{2}}\leq \int_{t_0}^{0}\mathcal{L}_{\ve}(J'_{\ve}(\varphi_{\ve}(t,u)))^{\frac{1}{2}}dt\\
    \leq & \dfrac{1}{\sqrt{\delta}}\int_{t_0}^{0}\mathcal{L}_{\ve}(J'_{\ve}(\varphi_{\ve}(t,u)))dt=\dfrac{1}{\sqrt{\delta}}\int_{t_0}^{0}\dfrac{d}{dt}J_{\ve}(\varphi_{\ve}(t,u))dt\\
    =& \dfrac{1}{\sqrt{\delta}}(J_{\ve}(u)-J_\ve(\varphi_{\ve}(t_0, u) )\leq \sqrt{\delta},
\end{align*}
given that    $J_\ve(u)\leq \inf_{\mathcal{Z}_{\ve}}J_{\ve}+ \delta$ and $\inf_{\mathcal{Z}_{\ve}}J_{\ve} \leq J_\ve(\varphi(t_0,u))$. We have reached  a contradiction, and, therefore, the lemma follows.    
\end{proof}

\begin{proof}[Proof of Theorem \ref{Teo1-Existence}]
  Let $u_{k}$ a minimizing sequences  for $J_{\ve}$ in $\mathcal{Z}_{\ve}$. By Lemma \ref{ekeland},
   we may assume  that $\mathcal{L}_{\ve}( J'_{\ve}(u_{k})) \rightarrow 0$  when $k \rightarrow \infty$.
 From Lemma \ref{JP-S}, $J_{\ve}: H_\ve^{1}(M)\rightarrow \mathbb{R}$ satifies the Palais-Smale condition,
and so there exists a subsequence $u_{k_j}\rightarrow v_\ve$ strongly in $H_\ve ^{1}(M)$ and $J_\ve(v_\ve)=\inf_{\mathcal{Z}_{\ve}}J_{\ve}$.
Since $\mathcal{Z}_{\ve}$ is closed  in $H_\ve ^{1}(M)$, we get that $v_\ve \in \mathcal{Z}_{\ve} $. Finally,
 $\mathcal{Z}_{\ve}$ is invariant
by negative flow, so, $v_\ve$ is  fixed point of flow and, therefore, a solution for  Eq. \eqref{yam-nodal-1}. Now since every sign changins solution  of \eqref{yam-nodal-1} belongs to $\mathcal{E}_\ve$, we have that $v_\ve^{\pm}\in \mathcal{N}_\ve$ and
\[\textbf{d}_\ve =\inf_{\mathcal{Z}_\ve}J_\ve\geq \inf_{\mathcal{E}_\ve}J_\ve \geq 2 \textbf{m}_\ve.\]
\end{proof}

For any $\varepsilon >0$, we  let
\[
E_{\varepsilon} (f) : = \frac{1}{\varepsilon^{n}}  \int_{\R^n} \left(\frac{\varepsilon^2} {2} |\nabla f |^2 + \frac{1} {2} f^2 -\frac{1}{q} \vert f\vert^q  \right) dx.
\]

Now, if we set $U_{\varepsilon} (x)\doteq U\left(\frac{x}{\varepsilon}\right)$, then $U_{\varepsilon}$ is a critical point of $E_{\varepsilon}$, i.e., $U_{\varepsilon}$ is a solution of
\begin{equation}
\label{limequatione}
-\varepsilon^2 \Delta U_{\varepsilon} + U_{\varepsilon} = U_{\varepsilon}^{q-1}.
\end{equation}
Let $x  \in M$, since $ M$ is closed we can fix $r_0 > 0$ such that
$\exp_x\vert_ {B(0,r_0 )} : B(0, r_0) \rightarrow B_g(x, r_0 )$ is a diffeomorphism.
Let $\chi_r$ be a smooth radial cut-off function.
Let us define on M the following function:
\begin{equation}\label{limeqUx}
u_{\ve,x}(y) : =
\begin{cases}
U_{\ve}(\exp^{-1}_x(y))\chi_r(\exp^{-1}_x(y))& \text{if $y\in B_g(x,r)$},\\
0&\text{otherwise}.
\end{cases}
\end{equation}
Now, consider the set $F(M)=\{(x,y)\in M\times M: x\neq y\}$. We define
\begin{equation}
	F_{\ve}(M)  \doteq  \{ (x,  y )\in M\times M :\quad  \mathrm{dist}_{g}(x, y)\geq 2\ve R_0\}\subset F(M),
\end{equation}
where $R_0=\text{diam}(M)$. Moreover, we define the function $i_{\ve}:F_{\ve}(M)\rightarrow H_{\ve}$ by
\begin{equation}
	i_\ve(x,y) \doteq t_{\ve}(u_{\ve,x})u_{\ve,x}-t_{\ve}(u_{\ve,y})u_{\ve, y},
\end{equation}
where $u_{\ve, x}$ and $u_{\ve, y}$ are defined  by \eqref{limeqUx}. Recall that for  $u\in H_{\ve}\setminus\{0\}$, $t_{\ve}(u)u\in \mathcal{N}_{\ve}$ where $t_{\ve}(u)$ is  given by \eqref{tnehari}.

\begin{lemma}
\label{iepc}
	For every $\ve >0$ the function $i_{\ve}:F_{\ve}(M)\rightarrow H_{\ve}$ is continuous. For each $\delta >0$ there exists $\ve_0$ such that, if $\ve\in (0, \ve_0)$ then
	\[
	i_{\ve}(x, y)\in J_{\ve}^{2 \mathbf{m}(E) + \delta}\cap \mathcal{E}_{\ve}\quad \text{for all }\quad  (x, y)\in F_{\ve}(M).
	\]
\end{lemma}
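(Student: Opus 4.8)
The plan is to analyze the two pieces of $i_\ve(x,y)$ separately, reducing everything to known estimates for the standard bubble $U$ on $\R^n$. First I would address continuity: the map $(x,y)\mapsto u_{\ve,x}$ into $H_\ve$ is continuous because $\exp_x$ depends smoothly on $x$, the cut-off $\chi_r$ is fixed, and $U_\ve$ is smooth; then $t_\ve(u_{\ve,x})$ is continuous in $u_{\ve,x}$ by formula \eqref{tnehari} since $u_{\ve,x}\neq 0$ and both $\mathcal{L}_\ve$ and $|\cdot|_{p,\ve}$ are continuous. Hence $i_\ve$ is continuous on $F_\ve(M)$.

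Next I would show $i_\ve(x,y)\in\mathcal{E}_\ve$. The key geometric point is that on $F_\ve(M)$ we have $\mathrm{dist}_g(x,y)\geq 2\ve R_0\geq 2r$ (choosing $r=\ve R_0$ as the cut-off radius, or observing the supports are forced apart), so the supports $B_g(x,r)$ and $B_g(y,r)$ are disjoint. Therefore $i_\ve(x,y)^+ = t_\ve(u_{\ve,x})u_{\ve,x}$ and $i_\ve(x,y)^- = t_\ve(u_{\ve,y})u_{\ve,y}$ (up to the sign convention), and each of these lies in $\mathcal{N}_\ve$ by construction of $t_\ve$. This is exactly the condition $u^+,u^-\in\mathcal{N}_\ve$ defining $\mathcal{E}_\ve$ in \eqref{Eep}.

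For the energy bound I would use $J_\ve(i_\ve(x,y)) = J_\ve(t_\ve(u_{\ve,x})u_{\ve,x}) + J_\ve(t_\ve(u_{\ve,y})u_{\ve,y})$ by disjoint supports, so it suffices to show $J_\ve(t_\ve(u_{\ve,x})u_{\ve,x})\leq \mathbf{m}(E)+\delta/2$ uniformly in $x\in M$ for $\ve$ small. Pulling back via $\exp_x$ and rescaling $y\mapsto \ve y$, one compares $\frac{1}{\ve^n}\int_M(\cdots)\,d\mu_g$ with the Euclidean energy $E$ evaluated on $U\chi_{r/\ve}$; the metric coefficients $g_{ij}(\exp_x(\ve y))= \delta_{ij}+O(\ve^2|y|^2)$, the volume density, and the scalar curvature term (which carries an extra $\ve^2$) all contribute errors that vanish as $\ve\to 0$, while the truncation error is exponentially small because $U$ and $\nabla U$ decay like $e^{-c|x|}$ and $r/\ve\to\infty$. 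Thus $E_\ve(u_{\ve,x})\to \mathbf{m}(E)$, and since $t_\ve(u_{\ve,x})\to 1$ and $J_\ve$ restricted to $\mathcal{N}_\ve$ equals $\frac{p-2}{2p}\mathcal{L}_\ve$, the projected value $J_\ve(t_\ve(u_{\ve,x})u_{\ve,x})$ also converges to $\mathbf{m}(E)$, uniformly in $x$ by compactness of $M$. This uniform asymptotic expansion of the energy is the main obstacle: one must track all error terms (metric distortion, curvature term, cut-off tail) and verify the convergence is uniform over the compact manifold $M$ rather than merely pointwise — this is the standard but delicate Lyapunov–Schmidt-style bubble estimate, and it is presumably where the hypothesis $\mathrm{dist}_g(x,y)\geq 2\ve R_0$ (hence $r=\ve R_0\to 0$ slower than needed for the exponential tail to survive the comparison) is used. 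Combining the two pieces gives $J_\ve(i_\ve(x,y))\leq 2\mathbf{m}(E)+\delta$ for $\ve<\ve_0$, completing the proof.
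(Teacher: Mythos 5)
Your proposal follows the same logical skeleton as the paper: split $i_\ve(x,y)$ into its positive and negative parts using disjoint supports, note that each piece lies in $\mathcal{N}_\ve$ by construction of $t_\ve$, and reduce the energy bound to a single-bubble estimate $J_\ve(t_\ve(u_{\ve,x})u_{\ve,x})\leq\mathbf{m}(E)+\delta/2$ uniformly in $x$. The difference is that the paper simply cites Proposition 4.2 in \cite{BBM} for this single-bubble estimate, whereas you attempt to sketch it from scratch. That is legitimate, but your sketch contains an internal inconsistency that you should resolve.

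Specifically, you set the cut-off radius $r=\ve R_0$ so that $F_\ve(M)$ forces the supports $B_g(x,r)$ and $B_g(y,r)$ apart, but a few lines later you argue that the truncation error is exponentially small ``because $U$ and $\nabla U$ decay like $e^{-c|x|}$ and $r/\ve\to\infty$.'' With $r=\ve R_0$ one has $r/\ve=R_0$, a fixed constant, so the rescaled comparison function is $U\chi_{R_0}$ and the truncation error $E(U\chi_{R_0})-E(U)$ is a fixed positive quantity that does \emph{not} vanish as $\ve\to 0$. Thus $E_\ve(u_{\ve,x})$ does not converge to $\mathbf{m}(E)$ under this choice. For the tail estimate to work one needs $r$ fixed (or at least $r/\ve\to\infty$), as in \cite{BBM} and \cite{Jimmy}, in which case $u_{\ve,x}(\exp_x(\ve y))=U(y)\chi_{r/\ve}(y)$ and the exponential decay of $U$ does the job — but then disjointness of supports on $F_\ve(M)$ is no longer automatic from the condition $\mathrm{dist}_g(x,y)\geq 2\ve R_0$ and must be addressed separately. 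You correctly sense this tension at the end of your paragraph, but you do not resolve it; as written, the two requirements on $r$ conflict, and the single-bubble energy estimate — the crux that the paper outsources to \cite{BBM} — is not actually established.
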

\begin{proof}
Let $x,y\in F_{\ve}(M)$. From Proposition 4.2 in \cite{BBM}, we have that for every $\delta>0$, there exists an $\varepsilon_0>0$ such that for $\varepsilon<\varepsilon_0$, 
\begin{equation}
\label{icont}
t_{\ve}(u_{\ve,x})u_{\ve,x}\quad \text{and}\quad t_{\ve}(u_{\ve,y})u_{\ve, y}\in J^{\mathbf{m}(E)+\frac{\delta}{2}}_{\varepsilon}.
\end{equation}
Observe that
\[
J(i_{\ve}(x,y))=J(t_{\ve}(u_{\ve,x})u_{\ve,x})+J(t_{\ve}(u_{\ve,x})u_{\ve,y}),
\]
given that $u_{\ve,x}$ and $u_{\ve,y}$ have disjoint support. From \eqref{icont} we immediately obtain that $i_{\ve}(x,y)\in J_{\ve}^{2 \mathbf{m}(E) + \delta}$. Finally, using once again that $u_{\ve,x}$ and $u_{\ve,y}$ have disjoint support we get that $i_\ve(x,y)^+= t_{\ve}(u_{\ve,x})u_{\ve,x}\in \mathcal{N}_{\ve}$ and $i_\ve(x,y)^-=-t_{\ve}(u_{\ve,y})u_{\ve, y}\in \mathcal{N}_{\ve}$, therefore, $i_\ve(x,y)\in \mathcal{E}_{\ve}$.
\end{proof}

\begin{proposition} We have that
\label{dep}
 \[ \lim_{\ve\rightarrow 0}\mathbf{d}_{\ve}=2\mathbf{m}(E).\]
\end{proposition}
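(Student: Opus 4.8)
The plan is to prove the stated limit by sandwiching $\mathbf{d}_\ve$ between two quantities that both converge to $2\mathbf{m}(E)$: a lower bound coming from the splitting of $J_\ve$ along the Nehari manifold together with Theorem \ref{mep}, and an upper bound coming from the glued test functions $i_\ve(x,y)$ of Lemma \ref{iepc}. Concretely I would establish
\[
\liminf_{\ve\to 0}\mathbf{d}_\ve\ge 2\mathbf{m}(E)\qquad\text{and}\qquad \limsup_{\ve\to 0}\mathbf{d}_\ve\le 2\mathbf{m}(E),
\]
and then conclude.

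For the lower bound I would recall, as in the proof of Theorem \ref{Teo1-Existence}, that every $u\in\mathcal{E}_\ve$ has $u^{+}$ and $u^{-}$ with disjoint supports, so that $\mathcal{L}_\ve(u)=\mathcal{L}_\ve(u^{+})+\mathcal{L}_\ve(u^{-})$ and $|u|_{p,\ve}^{p}=|u^{+}|_{p,\ve}^{p}+|u^{-}|_{p,\ve}^{p}$, whence $J_\ve(u)=J_\ve(u^{+})+J_\ve(u^{-})$ with $u^{\pm}\in\mathcal{N}_\ve$. Thus $J_\ve(u)\ge 2\mathbf{m}_\ve$ for every $u\in\mathcal{E}_\ve$, and taking the infimum gives $\mathbf{d}_\ve\ge 2\mathbf{m}_\ve$ (the last assertion of Theorem \ref{Teo1-Existence}). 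By Theorem \ref{mep}, $\mathbf{m}_\ve\to\mathbf{m}(E)$, so $\liminf_{\ve\to 0}\mathbf{d}_\ve\ge 2\mathbf{m}(E)$.

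For the upper bound, fix $\delta>0$. Since $M$ is closed with $\mathrm{diam}(M)=R_0>0$, pick two distinct points $x_0\neq y_0$ in $M$; then $\mathrm{dist}_g(x_0,y_0)\ge 2\ve R_0$ as soon as $\ve<\mathrm{dist}_g(x_0,y_0)/(2R_0)$, so $(x_0,y_0)\in F_\ve(M)$ for all small $\ve$ and in particular $F_\ve(M)\neq\emptyset$. By Lemma \ref{iepc} there is $\ve_0>0$ such that for $\ve\in(0,\ve_0)$ one has $i_\ve(x_0,y_0)\in\mathcal{E}_\ve\cap J_\ve^{2\mathbf{m}(E)+\delta}$, and since $\mathbf{d}_\ve=\inf_{\mathcal{E}_\ve}J_\ve$ by \eqref{depsilon} this yields $\mathbf{d}_\ve\le J_\ve(i_\ve(x_0,y_0))\le 2\mathbf{m}(E)+\delta$. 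Hence $\limsup_{\ve\to 0}\mathbf{d}_\ve\le 2\mathbf{m}(E)+\delta$, and letting $\delta\downarrow 0$ gives $\limsup_{\ve\to 0}\mathbf{d}_\ve\le 2\mathbf{m}(E)$. Combining the two inequalities proves $\lim_{\ve\to 0}\mathbf{d}_\ve=2\mathbf{m}(E)$.

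I do not expect any serious obstacle here: the analytic heart of the argument has already been done in Lemma \ref{iepc} (the energy estimate for the two glued bubbles, via Proposition 4.2 of \cite{BBM}) and in Theorem \ref{mep} (the convergence $\mathbf{m}_\ve\to\mathbf{m}(E)$). The only genuine points to verify in this proposition are the elementary non-emptiness of $F_\ve(M)$ for $\ve$ small and the additivity $J_\ve(u)=J_\ve(u^{+})+J_\ve(u^{-})$ on $\mathcal{E}_\ve$, which follows from $u^{+}u^{-}=0$ and $\langle\nabla_g u^{+},\nabla_g u^{-}\rangle_g=0$ a.e.; everything else is bookkeeping with $\liminf$/$\limsup$.
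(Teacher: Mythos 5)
Your proof is correct and follows essentially the same route as the paper: the lower bound $\mathbf{d}_\ve\ge 2\mathbf{m}_\ve$ from Theorem \ref{Teo1-Existence} combined with Theorem \ref{mep}, and the upper bound $\mathbf{d}_\ve\le 2\mathbf{m}(E)+\delta$ from the test function $i_\ve(x,y)$ of Lemma \ref{iepc}. The extra details you supply (additivity of $J_\ve$ over $u^{\pm}$ and non-emptiness of $F_\ve(M)$) are correct and merely make explicit what the paper leaves implicit.
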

\begin{proof}
  From Theorem \ref{Teo1-Existence} and Theorem \ref{mep} we have that  
  \[
  \mathbf{d}_\ve \geq 2  \mathbf{m}_\ve \quad \text{ and }\quad  \lim_{\ve\rightarrow 0} \mathbf{m}_\ve=\mathbf{m}(E).
  \] 
  Moreover, from Lemma \ref{iepc}, we get for every $\delta>0$, 
  \[
  d_{\ve}\leq 2\mathbf{m}(E)+\delta,\quad \text{for $\ve>0$ small enough}.
  \]
 Therefore,
\[
\lim_{\ve\rightarrow 0} \mathbf{d}_\ve=2\mathbf{m}(E),
\]
as claimed.
\end{proof}


\section{Concentration of sign changing function in $\mathcal{Z}_{\ve}$}

We begin this section with the following important result.
\begin{lemma}\label{C1}
 Let $u_k\in \mathcal{Z}_{\ve_k}\cap J_{\ve_k}^{\textbf{d}_{\ve_k} + \delta_k}$ where $\ve_k, \delta_k >0$ are such that $\ve_k, \delta_k\rightarrow 0$ as $k\rightarrow \infty$. Then,
  \[
  \mathrm{dist}_{\ve_{k}}(u_{k}^{\pm}, \mathcal{N}_{\ve_k})\rightarrow 0 \quad \text{ and }\quad   J_{\ve_k}(u_{\ve_k}^{\pm})\rightarrow \textbf{m}(E) \text{ as } \quad k\rightarrow \infty.  
  \]
\end{lemma}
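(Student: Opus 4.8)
The plan is to upgrade $u_k$ to an \emph{almost critical} sequence via Ekeland's principle, to combine the resulting ``almost Nehari'' identities for $u_k^{+}$ and $u_k^{-}$ with the fact that $u_k$ sits outside the cones $\pm\mathcal{P}$ at a scale bounded below, and finally to squeeze the energies of the two halves using $\mathbf{m}_{\ve}\to\mathbf{m}(E)$ and $\mathbf{d}_{\ve}\to 2\mathbf{m}(E)$. I would first record two elementary facts used throughout: since $u^{+}$ and $u^{-}$ have disjoint supports one has $\mathcal{L}_{\ve}(u^{+},u^{-})=0$, hence $\mathcal{L}_{\ve}(u)=\mathcal{L}_{\ve}(u^{+})+\mathcal{L}_{\ve}(u^{-})$, $|u|_{p,\ve}^{p}=|u^{+}|_{p,\ve}^{p}+|u^{-}|_{p,\ve}^{p}$, so $J_{\ve}(u)=J_{\ve}(u^{+})+J_{\ve}(u^{-})$ and, from the formula for $\nabla J_{\ve}$, $\nabla J_{\ve}(u)(u^{\pm})=\mathcal{L}_{\ve}(u^{\pm})-|u^{\pm}|_{p,\ve}^{p}$; and, by \eqref{mveSve} and Theorem \ref{mep}, $\alpha_{\ve}:=\tfrac12 S_{\ve}^{p/2(p-2)}$ converges as $\ve\to0$ to a positive limit, so $\alpha_{\ve_k}\ge c_0>0$ for $k$ large.

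Next I would apply Lemma \ref{ekeland} with $\delta=\delta_k$ to obtain $v_k\in\mathcal{Z}_{\ve_k}$ with $J_{\ve_k}(v_k)\le J_{\ve_k}(u_k)$, $\mathcal{L}_{\ve_k}(u_k-v_k)^{1/2}\le\sqrt{\delta_k}$ and $\mathcal{L}_{\ve_k}(J'_{\ve_k}(v_k))^{1/2}\le\sqrt{\delta_k}$; by \eqref{grdJe} and Cauchy--Schwarz for $\mathcal{L}_{\ve_k}$ this means $|\nabla J_{\ve_k}(v_k)(w)|\le\sqrt{\delta_k}\,\mathcal{L}_{\ve_k}(w)^{1/2}$ for all $w\in H_{\ve_k}$. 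Testing with $w=v_k$ gives $\bigl|\mathcal{L}_{\ve_k}(v_k)-|v_k|_{p,\ve_k}^{p}\bigr|\le\sqrt{\delta_k}\,\mathcal{L}_{\ve_k}(v_k)^{1/2}$; substituting $|v_k|_{p,\ve_k}^{p}=\tfrac p2\mathcal{L}_{\ve_k}(v_k)-pJ_{\ve_k}(v_k)$ and using $J_{\ve_k}(v_k)\le J_{\ve_k}(u_k)\le\mathbf{d}_{\ve_k}+\delta_k$, which is bounded by Proposition \ref{dep}, turns this into a quadratic inequality in $\mathcal{L}_{\ve_k}(v_k)^{1/2}$ with positive leading coefficient $\tfrac p2-1$; hence $\mathcal{L}_{\ve_k}(v_k)$ is bounded, and so, by the Sobolev embedding $|v_k|_{p,\ve_k}\le S_{\ve_k}^{-1/2}\mathcal{L}_{\ve_k}(v_k)^{1/2}$, is $|v_k|_{p,\ve_k}$, and since $\mathcal{L}_{\ve_k}(u_k-v_k)\to0$ also $\mathcal{L}_{\ve_k}(u_k)$ and $|u_k|_{p,\ve_k}$. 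Using $\bigl||a|^{p-2}a-|b|^{p-2}b\bigr|\le C(|a|^{p-2}+|b|^{p-2})|a-b|$, Hölder's inequality and the Sobolev embedding one gets $|(\nabla J_{\ve_k}(u_k)-\nabla J_{\ve_k}(v_k))(w)|\le C\sqrt{\delta_k}\,\mathcal{L}_{\ve_k}(w)^{1/2}$, so $u_k$ is itself almost critical, say $|\nabla J_{\ve_k}(u_k)(w)|\le\eta_k\,\mathcal{L}_{\ve_k}(w)^{1/2}$ with $\eta_k\to0$.

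Then I would exploit that $u_k\in\mathcal{Z}_{\ve_k}$, hence $u_k\notin\mathcal{D}_{\ve_k}$, so $\mathrm{dist}_{\ve_k}(u_k,\pm\mathcal{P})>\alpha_{\ve_k}$; taking $v=\mp u_k^{\mp}$ in the minimum defining $\mathrm{dist}_{\ve_k}(u_k,\mp\mathcal{P})$ gives $\mathcal{L}_{\ve_k}(u_k^{\pm})^{1/2}\ge\mathrm{dist}_{\ve_k}(u_k,\mp\mathcal{P})>\alpha_{\ve_k}\ge c_0$. Testing the almost-critical bound with $w=u_k^{\pm}$ and using $\mathcal{L}_{\ve_k}(u_k^{\pm})\le\mathcal{L}_{\ve_k}(u_k)\le C$ gives $\bigl|\mathcal{L}_{\ve_k}(u_k^{\pm})-|u_k^{\pm}|_{p,\ve_k}^{p}\bigr|\le C\eta_k\to0$, so $\mathcal{L}_{\ve_k}(u_k^{\pm})$ stays in a fixed compact subset of $(0,\infty)$, $|u_k^{\pm}|_{p,\ve_k}^{p}=\mathcal{L}_{\ve_k}(u_k^{\pm})+o(1)$, and therefore $t_{\ve_k}(u_k^{\pm})^{p-2}=\mathcal{L}_{\ve_k}(u_k^{\pm})/|u_k^{\pm}|_{p,\ve_k}^{p}\to1$. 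Since $t_{\ve_k}(u_k^{\pm})u_k^{\pm}\in\mathcal{N}_{\ve_k}$,
\[
\mathrm{dist}_{\ve_k}(u_k^{\pm},\mathcal{N}_{\ve_k})\le\mathcal{L}_{\ve_k}\bigl(t_{\ve_k}(u_k^{\pm})u_k^{\pm}-u_k^{\pm}\bigr)^{1/2}=\bigl|t_{\ve_k}(u_k^{\pm})-1\bigr|\,\mathcal{L}_{\ve_k}(u_k^{\pm})^{1/2}\longrightarrow0,
\]
which is the first assertion. For the second, $J_{\ve_k}(u_k^{\pm})=\bigl(\tfrac12-\tfrac1p\bigr)\mathcal{L}_{\ve_k}(u_k^{\pm})+o(1)=J_{\ve_k}\bigl(t_{\ve_k}(u_k^{\pm})u_k^{\pm}\bigr)+o(1)\ge\mathbf{m}_{\ve_k}+o(1)$; adding the two halves and comparing with $J_{\ve_k}(u_k)=J_{\ve_k}(u_k^{+})+J_{\ve_k}(u_k^{-})\le\mathbf{d}_{\ve_k}+\delta_k\to2\mathbf{m}(E)$ together with $\mathbf{m}_{\ve_k}\to\mathbf{m}(E)$ forces $J_{\ve_k}(u_k^{\pm})\to\mathbf{m}(E)$.

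The step I expect to be the main obstacle is the a priori boundedness of $\mathcal{L}_{\ve_k}(u_k)$: since $J_{\ve}$ is coercive only on $\mathcal{N}_{\ve}$ (Lemma \ref{JP-S}) and not on $H_{\ve}$, this cannot be read off the energy bound alone and has to be squeezed out of the combination of $J_{\ve_k}(u_k)\le\mathbf{d}_{\ve_k}+\delta_k$ with the almost-critical identity produced by Ekeland's principle, which is why the perturbation step is unavoidable. The only other point requiring care is that the sign truncations $u\mapsto u^{\pm}$ are treated entirely through the orthogonality $\mathcal{L}_{\ve}(u^{+},u^{-})=0$, so that every estimate splits cleanly into its two halves and no Lipschitz continuity of $u\mapsto u^{\pm}$ on $H_{\ve}$ is needed.
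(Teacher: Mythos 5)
Your proof is correct and follows essentially the same route as the paper: use Ekeland's lemma to force $\mathcal{L}_{\ve_k}(J'_{\ve_k}(u_k))^{1/2}\to 0$, deduce boundedness of $\mathcal{L}_{\ve_k}(u_k)$ from the identity $\tfrac{p-2}{2p}\mathcal{L}_{\ve_k}(u_k)=J_{\ve_k}(u_k)-\tfrac1p\mathcal{L}_{\ve_k}(J'_{\ve_k}(u_k),u_k)$, test with $u_k^{\pm}$ to get $t_{\ve_k}(u_k^{\pm})\to1$, and then squeeze the two energies against $\mathbf{d}_{\ve_k}\to 2\mathbf{m}(E)$ and $\mathbf{m}_{\ve_k}\to\mathbf{m}(E)$. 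You are, however, more careful than the paper at two points it leaves implicit: you verify that the Ekeland perturbation $v_k$ can be traded back for the original $u_k$ (by showing $\nabla J_{\ve_k}(u_k)$ is itself $o(1)$-small rather than invoking a ``we may assume'' replacement, which would otherwise run into the non-Lipschitz character of $u\mapsto u^{\pm}$ on $H^1$), and you supply the lower bound $\mathcal{L}_{\ve_k}(u_k^{\pm})^{1/2}>\alpha_{\ve_k}\ge c_0$ coming from $u_k\notin\mathcal{B}_{\alpha}(\ve_k,\pm\mathcal{P})$, without which the conclusion $t_{\ve_k}(u_k^{\pm})\to1$ does not follow from $|\mathcal{L}_{\ve_k}(u_k^{\pm})-|u_k^{\pm}|^p_{p,\ve_k}|\to0$ alone.
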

\begin{proof}
Observe that 
\begin{align}
\label{Lepuk}
\frac{p-2}{2p}\mathcal{L}_{\varepsilon_k}(u_k,u_k)&=J_{\varepsilon_k}(u_k)-\frac{1}{p} \mathcal{L}_\ve(J'_{\varepsilon_k}(u_k),u_k)\\
\nonumber
&\leq J_{\varepsilon_k}(u_k)+\mathcal{L}_{\varepsilon_k}(J'_{\varepsilon_k}(u_k),J'_{\varepsilon_k}(u_k))^{\frac{1}{2}}\mathcal{L}_{\varepsilon_k}(u_k,u_k)^{\frac{1}{2}}.
\end{align}
From Lemma \ref{ekeland} we may assume that $\mathcal{L}_\ve(J'_{\varepsilon_{k}}(u_k))^{\frac{1}{2}}\rightarrow 0$. Therefore, from this fact and \eqref{Lepuk} we get that $\mathcal{L}_{\varepsilon_k}(u_k,u_k)$ is uniformly bounded. Hence, 
\[
\mathcal{L}_\ve(J'_{\varepsilon_k}(u^{\pm}_k),u^{\pm}_k)=\Big|\mathcal{L}_{\varepsilon_k}(u^{\pm}_k,u^{\pm}_k)-|u^{\pm}_k|^p_{p,\varepsilon_k}\Big|\rightarrow 0.
\]
From this we get that $t_{\varepsilon_{k}}(u^{\pm}_k)$, defined by \eqref{tnehari}, tends to $1$ and, therefore,
\[
\mathrm{dist} _{\ve_{k}}(u_{k}^{\pm}, \mathcal{N}_{\ve_k})\leq \mathcal{L}_{\varepsilon_k}(u^{\pm}_k-t_{\varepsilon_k}(u^{\pm}_k)u^{\pm}_k,u^{\pm}_k-t_{\varepsilon_k}(u^{\pm}_k)u^{\pm}_k)^{\frac{1}{2}}\rightarrow 0, \quad \text{as $k\rightarrow 0$}.
\]
If we use this, together with Theorem \ref{mep} and Proposition \ref{dep}, we get
\begin{align*}
2m(E)& \leq \lim_{k\rightarrow\infty} J_{\varepsilon}(t_{\varepsilon_k}(u^{+}_k)u^{+}_k)+\lim_{k\rightarrow\infty} J_{\varepsilon}(t_{\varepsilon_k}(u^{-}_k)u^{-}_k)\\
&= \lim_{k\rightarrow\infty} J_{\varepsilon}(u^{+}_k)+\lim_{k\rightarrow\infty} J_{\varepsilon}(u^{-}_k)\\
&=\lim_{k\rightarrow\infty}J_{\varepsilon_k}(u_k)\\
&=2m(E).
\end{align*}
Therefore, 
\[
\lim_{k\rightarrow\infty} J_{\varepsilon}(u^{\pm}_k)=m(E).
\]
 \end{proof} 
  \begin{remark}
  \label{BallK}
    On any closed Riemannian manifold  $M$ for any $\ve >0 $ there are  points $x_j\in M$, with $ j=1,\ldots, K_{\ve}$, such that the balls $(B(x_j, \ve))$ are disjoint, and the set is maximal under this condition. It follows that the ball $B(x_j, 2\ve)$ cover $M$. It is easy to construct closed sets $A_j$ such that $B(x_{j},\ve)\subset A_j\subset B(x_j, 2\ve)$ which cover $M$ and only intersect in their boundaries. Moreover, one can see by volume comparison argument that, if $\ve$ is small enough, there exists a constat $K>0$, independent of $\ve$, such that for any point in $M$ can be in at most $K$ of the balls $B(x_j, 3\ve)$.
  \end{remark}

\begin{theorem}\label{Teo_1}
  For any $\eta\in (0,1)$ there exist $\ve_0, \delta_0 > 0$ such that for  any $\ve\in (0,\ve_0)$, $\delta\in (0, \delta_0)$ and $u \in \mathcal{Z}_{\ve}\cap J_{\ve}^{\textbf{d}_\ve+ \delta}  $ there exist $x^{+}=x^{+}(u)$ and $x^-=x^-(u)$ in M such that
  \[
  \int_{B(x^{\pm},r)}\vert u^{\pm}\vert^p d\mu_g \geq \eta \int_{M}\vert u^{\pm}\vert^pd\mu_g.
  \]
  \begin{proof}
    We do the proof only for  $\vert u^{+}\vert^{p}$, the proof for $\vert u^{-}\vert^{p}$ is similar. Assume the theorem is not true. Then there exist $0<\eta < 1$ and sequences $\ve_{k}\rightarrow 0$, $\delta_{k}\rightarrow 0$ and  $u_k \in \mathcal{Z}_{\ve_k}\cap J_{\ve_k}^{\textbf{d}_{\ve_k}+ \delta_k} $  such that for all $x\in M$,
    \[\int_{B(x,r)}\vert u^{+}_k\vert^p d\mu_g < \eta \int_{M}\vert u^{+}_k\vert^pd\mu_g.\]

We first show that there exist $\beta >0 $, $k_0\in\mathbb{N}$  for each $k>k_0$, a point $x_k\in M$ such that
\begin{equation}
\label{concuk}
\dfrac{1}{\ve^n_k}\int_{B(x_k,2 \ve_k )}\vert u^{+}_k \vert^{p} d\mu_{g} > \beta.
\end{equation}

      Let us consider $\ve_0>0$ such that for any $0<\ve< \ve_0$ one can construct a set like in the  Remark \ref{BallK}.  Let $u_{k,j} \doteq  u^{+}_{k}\chi_{A_{j}^k}$ be the restriction of $u^+_{k}$ to $A_j^{k}$ and $0$ away  from $A_j^{k}$. Then,
\begin{align*}
 \vert u^+_k\vert_{p,\ve_k}^{p}&= \dfrac{1}{\ve^n_k}\int_{M}\vert u^{+}_k \vert^{p} d\mu_{g}= \sum_{j}\vert u_{k,j} \vert_{p,\ve_k}^{p}\\
&= \sum_{j}\big(\vert u_{k,j} \vert_{p,\ve_k}^{p-2}\big)\big(\vert u_{k,j} \vert_{p,\ve_k}^{2}\big)\leq \Big( \max_{j}\vert u_{k,j}\vert_{p,\ve_k}^{p-2}\Big)\sum_{j}\vert u_{k,j}\vert _{p,\ve_k}^{2}.
\end{align*}

      Now, let $\varphi_{\ve_k}$ be the cut-off functions on $\mathbb{R}^{n}$ which are $1$ in $B(0, 2\ve_k)$  and vanish away from $B(0, 3\ve_k)$. Moreover, $\|\nabla \varphi_{\ve_k}\|=\dfrac{1}{\ve_{k}}$ in the intermediate annulus.
      Define for $j=1,\ldots,K_\ve$,
\[
\widetilde{u}_{k,j}=u^{+}_k(x)\varphi_{\ve_k}(d(x,x_j)).
\]

     Since $u_{k,j}\leq \widetilde{u}_{k,j}$,  we have $\vert u_{k,j}\vert_{p,\ve_k}^{2}\leq \vert \widetilde{u}_{k,j}\vert_{p,\ve_k}^{2}\leq C\|\widetilde{u}_{k,j}\|_{\ve_k}^{2}.$
      Then, since we have that $\widetilde{u}_{k,j}\leq u^{+}_k$ and on $C_{k,j}= B(x_j, 3\ve_k)-A_j^k$,
      \[\ve^2_k\|\nabla \widetilde{u}_{k,j}\|^2\leq 2\ve^2_k\|\nabla u^+_k\|^2 + 2(u^+_k)^2.\]
      We have  that
      \begin{equation}
      \vert u_{k}^{+}\vert_{p,\ve_k}^p\leq c\|u_k^{+} \|_{\ve_k}^{2}\max_{j}\vert u_{k,j}\vert_{p,\ve_k}^{p-2}.
      \end{equation}
      Now, from Lemma \ref{C1}, 
\begin{equation}\label{C3}
\lim_{k\rightarrow \infty}\| u_{k}^{\pm}\|_{\ve_k}^{2}=\lim_{k\rightarrow \infty}\vert u_{k}^{\pm}\vert_{p,\ve_k}^{p}=\dfrac{2p}{p-2}\textbf{m}(E).
\end{equation}
 Therefore, there exists a $\beta >0 $ such that for each $k$ large enough we can find a $j\in \{1,\ldots,K_{\varepsilon}\}$ such that
      \[
      \beta <\vert u_{k,j}\vert_{p,\ve_k}^{p}=\dfrac{1}{\ve_{k}^{n}}\int_{A_{j}^{k}}\vert u_{k}^{+}\vert^{p} d\mu_{g} \leq \dfrac{1}{\ve_{k}^{n}}\int_{B(x_j, 2\ve_{k})}\vert u_{k}^{+}\vert^{p} d\mu_{g}.
      \]
From this it follows that \eqref{concuk} is established. 

Now, from \eqref{concuk} and given that $t_{\varepsilon_k}(u^{+}_k)$ tends to $1$, there is a $k'_0\in\mathbb{N}$ such that for each $k>k'_0$
\begin{equation}
\label{tukuk}
    \dfrac{1}{\ve^n_k}\int_{B(x_k,2 \ve_k )}\vert t_{\varepsilon_k}(u^{+}_k)u^{+}_k \vert^{p} d\mu_{g} > \frac{\beta}{2}.
\end{equation} 
From Lemma \ref{C1} we get that
\[
m_{\varepsilon_k}\leq J_{\varepsilon_k}(t_{\varepsilon_k}(u^{+}_k)u^{+}_k)\leq m_{\varepsilon_k}+\overline{\delta_k},
\]
for some sequence $\{\overline{\delta_k}\}$ such that $\overline{\delta_k}\rightarrow 0$. 

Set $v_k \doteq t_{\varepsilon_k}(u^{+}_k)u^{+}_k$. Then, for each $k\in\mathbb{N}$ we have that $v_k\in \Sigma_{\varepsilon_k, m_{\varepsilon}+\overline{\delta_k}}$, where
\[
\Sigma_{\varepsilon_k, m_{\varepsilon}+\overline{\delta_k}} \doteq  \{u\in N_{\varepsilon_k}:J_{\varepsilon_k}(u)< m_{\varepsilon}+\overline{\delta_k}\}.
\]
Moreover, from \eqref{tukuk} we have that
\[
\dfrac{1}{\ve^n_k}\int_{B(x_k,2 \ve_k )}\vert v_k \vert^{p} d\mu_{g} > \frac{\beta}{2}.
\]

Now, Lemma 3.4 in \cite{Jimmy} gives a function $\bar{v}_k = v_{k,1}
 + v_{k,2}$ such that $\bar{v}_k \in \Sigma_{\varepsilon_k, m_{\varepsilon}+\overline{\delta_k}}$,  and $v_{k,1}$ is
supported inside a ball centered at $x_k$, $v_{k,1}$ and $v_{k,2}$ have disjoint support and $v_k = \bar{v}_k$ in $B(x_k, 2\ve_k)$ and outside $B(x_k,r)$. 

Then, we have that
\[
\dfrac{1}{\ve^n_k}\int_{M}\vert v^+_{k,1} \vert^{p} d\mu_{g} > \frac{\beta}{2},
\]
and 
\[
\dfrac{1}{\ve^n_k}\int_{M}\vert v^+_{k,2} \vert^{p} d\mu_{g}\geq \dfrac{(1-\eta)}{\ve^n_k}\int_{M}\vert v_{k} \vert^{p} d\mu_{g}\geq (1-\eta)\frac{2p}{p-2}\mathbf{m}_{\ve}.
\]
Here $\eta\in (0,1)$ and it is chosen to be very close to $1$. Now,  from Corollary 3.3 in \cite{Jimmy}, there exists $\delta_0 > 0$, independent of $k$, such that $J_{\ve_k}(\bar{v}_k) \geq \Psi(\delta_0)\mathbf{m}_{\ve}$, where $\Psi:(0,1)\rightarrow (1,\infty)$ is defined in Lemma 3.2 in \cite{Jimmy}.

On the other hand for $k$ large enough we have that $J_{\ve_k}(\bar{v}_k) < \mathbf{m}_{\ve}+ 2\delta_k < \Psi(\delta_0)\mathbf{m}_{\ve}$, reaching a contradiction.

  \end{proof} 
\end{theorem}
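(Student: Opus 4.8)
The plan is to argue by contradiction, combining a localization argument at scale $\ve$ with the rigidity of the limiting energy established in Lemma~\ref{C1} and the two-bump energy estimates of Petean~\cite{Jimmy}; I treat $u^+$ only, the case of $u^-$ being identical. So suppose the conclusion fails: there are $\eta\in(0,1)$ and sequences $\ve_k\to0$, $\delta_k\to0$, $u_k\in\mathcal{Z}_{\ve_k}\cap J_{\ve_k}^{\mathbf{d}_{\ve_k}+\delta_k}$ with $\int_{B(x,r)}|u_k^+|^p\,d\mu_g<\eta\int_M|u_k^+|^p\,d\mu_g$ for every $x\in M$.

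The first step is to extract, from this failure of concentration at the fixed scale $r$, concentration of a fixed fraction of the mass in some ball of radius $2\ve_k$. Using the covering of Remark~\ref{BallK}, write $u_{k,j}=u_k^+\chi_{A_j^k}$ with $B(x_j,\ve_k)\subset A_j^k\subset B(x_j,2\ve_k)$; then
\[
|u_k^+|_{p,\ve_k}^p=\sum_j|u_{k,j}|_{p,\ve_k}^p\le\Big(\max_j|u_{k,j}|_{p,\ve_k}^{p-2}\Big)\sum_j|u_{k,j}|_{p,\ve_k}^2 .
\]
To bound the last sum I dominate each $u_{k,j}$ by $\widetilde u_{k,j}=u_k^+\,\varphi_{\ve_k}(d(\cdot,x_j))$, with $\varphi_{\ve_k}$ equal to $1$ on $B(0,2\ve_k)$, vanishing outside $B(0,3\ve_k)$ and $\|\nabla\varphi_{\ve_k}\|=\ve_k^{-1}$, so that $|u_{k,j}|_{p,\ve_k}^2\le C\|\widetilde u_{k,j}\|_{\ve_k}^2$ by the $\ve$-independent embedding $H_{\ve_k}\hookrightarrow L^p_{\ve_k}$; the factor $\ve_k^{-2}$ coming from $\nabla\varphi_{\ve_k}$ is absorbed by the term $\ve_k^2\|\nabla u_k^+\|^2+(u_k^+)^2$ on the annulus $B(x_j,3\ve_k)\setminus A_j^k$. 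Summing and using the bounded overlap $K$ of Remark~\ref{BallK} gives $|u_k^+|_{p,\ve_k}^p\le C'\|u_k^+\|_{\ve_k}^2\max_j|u_{k,j}|_{p,\ve_k}^{p-2}$. Since Lemma~\ref{C1} yields $\|u_k^+\|_{\ve_k}^2\to\frac{2p}{p-2}\mathbf{m}(E)$ and $|u_k^+|_{p,\ve_k}^p\to\frac{2p}{p-2}\mathbf{m}(E)>0$, we obtain $\max_j|u_{k,j}|_{p,\ve_k}^{p-2}\ge 2\beta>0$ for $k$ large, i.e.\ there is $x_k\in M$ with $\ve_k^{-n}\int_{B(x_k,2\ve_k)}|u_k^+|^p\,d\mu_g>\beta$.

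Next I project onto the Nehari manifold. By Lemma~\ref{C1}, $t_{\ve_k}(u_k^+)\to1$, so $v_k:=t_{\ve_k}(u_k^+)u_k^+\in\mathcal{N}_{\ve_k}$ still satisfies $\ve_k^{-n}\int_{B(x_k,2\ve_k)}|v_k|^p\,d\mu_g>\beta/2$ for $k$ large, while $J_{\ve_k}(v_k)\to\mathbf{m}(E)=\lim_k\mathbf{m}_{\ve_k}$, so $v_k\in\Sigma_{\ve_k,\mathbf{m}_{\ve_k}+\overline{\delta_k}}$ for some $\overline{\delta_k}\to0$. The splitting construction (Lemma~3.4 of~\cite{Jimmy}) then furnishes $\bar v_k=v_{k,1}+v_{k,2}$ with $v_{k,1},v_{k,2}$ of disjoint support, $v_{k,1}$ supported in a ball around $x_k$, $\bar v_k=v_k$ on $B(x_k,2\ve_k)$ and outside $B(x_k,r)$, and $\bar v_k\in\Sigma_{\ve_k,\mathbf{m}_{\ve_k}+\overline{\delta_k}}$. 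By construction $\ve_k^{-n}\int_M|v_{k,1}|^p\,d\mu_g>\beta/2$; and since $\bar v_k=v_k$ outside $B(x_k,r)$ while the non-concentration hypothesis (exactly preserved under the positive scaling $t_{\ve_k}(u_k^+)$) forces $\ve_k^{-n}\int_{M\setminus B(x_k,r)}|v_k|^p\,d\mu_g\ge(1-\eta)\ve_k^{-n}\int_M|v_k|^p\,d\mu_g$, we get $\ve_k^{-n}\int_M|v_{k,2}|^p\,d\mu_g\ge(1-\eta)\frac{2p}{p-2}\mathbf{m}_{\ve_k}$, bounded below uniformly in $k$ once $\eta<1$ is fixed. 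Thus $\bar v_k$ is a genuine two-bump function in the Nehari manifold, both bumps carrying a fixed fraction of the mass.

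Finally I close the argument with the lower energy bound for such functions: by Corollary~3.3 of~\cite{Jimmy} (built on Lemma~3.2 there) there is $\delta_0>0$ independent of $k$ and $\Psi:(0,1)\to(1,\infty)$ with $J_{\ve_k}(\bar v_k)\ge\Psi(\delta_0)\,\mathbf{m}_{\ve_k}$. But $J_{\ve_k}(\bar v_k)<\mathbf{m}_{\ve_k}+\overline{\delta_k}$ with $\overline{\delta_k}\to0$, while $\Psi(\delta_0)>1$ and $\mathbf{m}_{\ve_k}\to\mathbf{m}(E)>0$, so $\mathbf{m}_{\ve_k}+\overline{\delta_k}<\Psi(\delta_0)\mathbf{m}_{\ve_k}$ for $k$ large --- a contradiction, which proves the theorem. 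I expect the main difficulty to be the uniformity in $k$ (equivalently in $\ve$) of every constant involved: the Sobolev constant in the localization step, the overlap constant $K$ of Remark~\ref{BallK}, and above all the threshold $\delta_0$ from Petean's estimates, since the whole contradiction rests on $\delta_0$ and $\beta$ being fixed before letting $k\to\infty$.
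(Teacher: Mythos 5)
Your proposal follows the paper's proof step for step: the same contradiction setup, the same $\ve_k$-scale localization via the covering of Remark \ref{BallK} and cut-offs $\widetilde u_{k,j}$, the same use of Lemma \ref{C1} to deduce the uniform lower bound $\beta$, the same projection $v_k = t_{\ve_k}(u_k^+)u_k^+$ onto the Nehari manifold, the same splitting $\bar v_k = v_{k,1}+v_{k,2}$ from Lemma 3.4 of \cite{Jimmy}, and the same final contradiction via the $\Psi(\delta_0)\mathbf{m}_{\ve}$ lower bound of Corollary 3.3 there. This is essentially the paper's proof, correctly reproduced.
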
  

\section{Multiplicity of Nodal Solutions}

Recall that $F(M)=\{(x,y)\in M\times M: x\neq y\}$, and 
\begin{equation}
  F_{\ve}(M) \doteq \{ (x,  y )\in M\times M :\quad  \mathrm{dist}_{g}(x, y)\geq 2\ve r_0\}\subset F(M),
\end{equation}
where $R_0=\text{diam}(M)$. We define the function $i_{\ve}:F_{\ve}\rightarrow H_{\ve}$ by
\begin{equation}
i_\ve(x,y)=t_{\ve}(u_{\ve,x})u_{\ve,x}-t_{\ve}(u_{\ve,y})u_{\ve, y},
\end{equation}
where $t_{\ve}(u)\in \mathbb{R}$  such that if $u\in H_{\ve}-\{0\}$ then $t_{\ve}(u)u\in \mathcal{N}_{\ve}.$

\begin{lemma}
For every $\ve >0$ the function $i_{\ve}$ is continuous. For each $\delta >0$ there exists $\ve_0$ such that, if $\ve\in (0, \ve_0)$ then
\[i_{\ve}(x, y)\in J_{\ve}^{2 c_\infty + \delta}\cap \mathcal{E}_{\ve}\quad \text{for all }\quad  (x, y)\in F_{\ve}(M).\]
\end{lemma}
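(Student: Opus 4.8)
The plan is to recognize this lemma as essentially a restatement of Lemma \ref{iepc}, with the only cosmetic difference that the threshold is written $2c_\infty$ rather than $2\mathbf{m}(E)$ — so the proof should reduce it to the machinery already established. First I would note that $i_\ve(x,y)^+ = t_\ve(u_{\ve,x})u_{\ve,x}$ and $i_\ve(x,y)^- = -t_\ve(u_{\ve,y})u_{\ve,y}$, which holds because for $(x,y)\in F_\ve(M)$ the functions $u_{\ve,x}$ and $u_{\ve,y}$ have disjoint supports: indeed $u_{\ve,x}$ is supported in $B_g(x,r)$ with $r$ of order $\ve$, while $\mathrm{dist}_g(x,y)\geq 2\ve R_0$ with $R_0 = \mathrm{diam}(M)$ forces the balls apart once $\ve$ is small. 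Both pieces lie in $\mathcal{N}_\ve$ by the very definition of $t_\ve$, so $i_\ve(x,y)\in\mathcal{E}_\ve$ as in \eqref{Eep}.

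Next, for the energy estimate I would invoke the additivity of $J_\ve$ over functions with disjoint support:
\[
J_\ve(i_\ve(x,y)) = J_\ve\bigl(t_\ve(u_{\ve,x})u_{\ve,x}\bigr) + J_\ve\bigl(t_\ve(u_{\ve,y})u_{\ve,y}\bigr).
\]
Then I would cite Proposition 4.2 of \cite{BBM} exactly as in the proof of Lemma \ref{iepc}: for every $\delta>0$ there is $\ve_0>0$ so that for $\ve<\ve_0$ both $t_\ve(u_{\ve,x})u_{\ve,x}$ and $t_\ve(u_{\ve,y})u_{\ve,y}$ lie in $J_\ve^{\mathbf{m}(E)+\delta/2}$, uniformly in $x,y$. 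Summing the two bounds gives $i_\ve(x,y)\in J_\ve^{2\mathbf{m}(E)+\delta}$, which is the asserted $J_\ve^{2c_\infty+\delta}$ once one identifies $c_\infty = \mathbf{m}(E)$ (the limiting ground-state energy $\mathbf{m}(E)$ from \eqref{mE}, which is what $c_\infty$ denotes in this section). Continuity of $i_\ve$ for fixed $\ve$ follows from the smooth dependence of $x\mapsto u_{\ve,x}$ (via $\exp_x$ and the fixed cut-off $\chi_r$) together with continuity of $u\mapsto t_\ve(u)u$ away from $0$, which is clear from formula \eqref{tnehari}.

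The main obstacle — really the only point requiring care — is the uniformity in $(x,y)$ of the energy estimate \eqref{icont}: one must check that the $\ve_0$ produced by Proposition 4.2 of \cite{BBM} can be taken independent of the base point, which holds because $M$ is compact and the local model $U_\ve$ is the same at every point up to the isometry $\exp_x$, with the cut-off error controlled uniformly by the exponential decay of $U$ and its gradient. Granting that, the proof is a verbatim copy of the proof of Lemma \ref{iepc}, and I would simply write it as such, emphasizing the disjoint-support reduction and the $c_\infty = \mathbf{m}(E)$ identification.
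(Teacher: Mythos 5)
Your proposal is correct and follows the paper's own proof of Lemma \ref{iepc} essentially verbatim: disjoint supports of $u_{\ve,x}$ and $u_{\ve,y}$ give $i_\ve(x,y)\in\mathcal{E}_\ve$, additivity of $J_\ve$ plus Proposition 4.2 of \cite{BBM} gives the energy bound, and $c_\infty=\mathbf{m}(E)$ is the correct identification. The present lemma is indeed just a restatement of Lemma \ref{iepc} in the $c_\infty$ notation, so nothing new is needed.
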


\subsection{Center of Mass.}

In   \cite{GroveKarcher}, H. Karcher and K. Grove define the center of mass of a function $u$ on a closed Riemannian manifold  $(M,g)$, in the following form, since $M$ is closed the exists $r_0>0$ such that for any $x\in M $ and $r\leq r_0$ the geodesic ball of the radius $r$ center in at $x$, $B(x,r)$ is strongly convex (see \cite{Jimmy} and \cite{GroveKarcher} for details). Let $u\in L^{1}(M)$ nonnegative. We consider the function continuous $P_{u}:M \rightarrow \mathbb{R}$,
\[
P_{u}(x)\doteq\int_{M}(d(x,y))^{2}u(y)d\mu_{g}(y).
\]
Then,  H. Karcher and K. Grove, proved that if $r>0$ is small enough such that the support of $u$ is contained in $B(x,r)$, then $P_{u}$ as a unique global minimum, which they defined as the center of mass of $u$ and denoted by $\textbf{cm}(u).$

We consider now the  center mass of a function introduced in Section 5 of \cite{Jimmy}. For  any function $u\in L^{1}(M)$ and positive $r$ let  the $(u,r)-$\emph{concentration function}    defined by
\begin{equation}
\label{Cur}
  C_{u,r}(x)\doteq\dfrac{\int_{B(x,r)}\vert u\vert d\mu_{g}}{\|u\|_{_{L^{1}(M)}}}.
  \end{equation}

We have that $C_{u,r}\colon  M\rightarrow [0,1]$ it is a continuos function. Where if $r\geq \mathrm{diam}(M)$, then $C_{u,r}\equiv 1$ and $\lim_{r\rightarrow 0} C_{u,r}(x)=0$.

We define the $r-$\emph{concentration  coefficient} of $u$, $C_r(u)$   be the maximum of $C_{u,r}$,
\begin{equation}
  C_{r}(u)\doteq\max_{x\in M}C_{u,r}(x).
  \end{equation}

For any $\eta\in (0,1)$ let $L^{1}_{\ve,\eta}(M,g)\doteq\{u\in L^{1}(M): C_{r}(u)>\eta\}$. We  will use  the  following construction, for any $\eta\in (1/2,1)$ consider the piecewise linear continuous function $\varphi_{\eta}:\mathbb{R}\rightarrow [0,1]$ defined by $\varphi_{\eta}(t)=0$ if $t\leq 1-\eta$  and $\varphi_{\eta}(t)=1$ if $t\geq \eta$  it is a linear and increasing in $[1-\eta, \eta]$.

For  $r>0$ such that $2r\leq r_0$, we let
\[
\Phi_{r,\eta}(u)(x) \doteq \varphi_{\eta}(C_{u,r}(x))u(x),\quad \text{where $u\in L_{r,\eta}^{1}(M) $ and $x\in M$}.
\]

 For the proof of  the following results, namely Lemma \ref{L5.1} and Theorem \ref{Th5.2}, see Pag. 15 of  the already mentioned paper \cite{Jimmy}.

\begin{lemma}
\label{L5.1}
  For any  $u\in L_{r,\eta}^{1}(M) $  the support of $\Phi_{r,\eta}(u)$ is contained in a geodesic ball of radius $2r.$
  (centered at a point of maximal r-concentration)
\end{lemma}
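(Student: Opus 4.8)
The plan is to show that $\Phi_{r,\eta}(u)$ vanishes identically outside a $2r$-ball centered at any point of maximal $r$-concentration, using only the elementary fact that two metric balls of radius $r$ whose centers are at distance at least $2r$ are disjoint, together with the defining flat part of the cutoff $\varphi_{\eta}$.

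First I would fix $u\in L_{r,\eta}^{1}(M)$ and record that, by definition of $L_{r,\eta}^{1}(M)$, one has $C_{r}(u)=\max_{x\in M}C_{u,r}(x)>\eta$; since $C_{u,r}$ is continuous on the compact manifold $M$, this maximum is attained at some $x_{0}\in M$, a point of maximal $r$-concentration, with $C_{u,r}(x_{0})>\eta$. Next I would establish the key pointwise claim: if $d(x,x_{0})\geq 2r$, then $B(x,r)\cap B(x_{0},r)=\emptyset$ by the triangle inequality, hence
\[
C_{u,r}(x)+C_{u,r}(x_{0})=\frac{1}{\|u\|_{L^{1}(M)}}\left(\int_{B(x,r)}|u|\,d\mu_{g}+\int_{B(x_{0},r)}|u|\,d\mu_{g}\right)\leq 1,
\]
so that $C_{u,r}(x)\leq 1-C_{u,r}(x_{0})<1-\eta$. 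Since $\varphi_{\eta}$ vanishes on $(-\infty,1-\eta]$, this forces $\varphi_{\eta}(C_{u,r}(x))=0$ and therefore $\Phi_{r,\eta}(u)(x)=\varphi_{\eta}(C_{u,r}(x))\,u(x)=0$. Finally I would conclude that $\Phi_{r,\eta}(u)\equiv 0$ on $M\setminus B(x_{0},2r)$, so the set on which it is nonzero lies in the open ball $B(x_{0},2r)$, whence $\mathrm{supp}\,\Phi_{r,\eta}(u)$ is contained in the closed geodesic ball of radius $2r$ centered at $x_{0}$, which is the assertion.

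The hard part here is essentially bookkeeping rather than analysis: the only point requiring care is to check that $C_{u,r}(x)$ and $C_{u,r}(x_{0})$ are literally mass fractions of the \emph{same} $L^{1}$ function, so that disjointness of the two balls yields $C_{u,r}(x)+C_{u,r}(x_{0})\leq 1$, and then to combine this with $C_{u,r}(x_{0})>\eta$ and the plateau of $\varphi_{\eta}$ to land strictly below the threshold $1-\eta$. Because the factor $\varphi_{\eta}(C_{u,r}(\cdot))$ is continuous and vanishes off $B(x_{0},2r)$, no subtlety about essential supports arises even though $u$ is only assumed to lie in $L^{1}(M)$, and the standing constraint $2r\leq r_{0}$ guarantees the ball is strongly convex, as needed in the subsequent use of the center of mass.
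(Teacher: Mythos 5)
Your proof is correct, and it is essentially the standard argument that the paper defers to (the paper cites \cite{Jimmy}, p.~15, rather than reproducing the proof): disjointness of $B(x,r)$ and $B(x_{0},r)$ for $d(x,x_{0})\geq 2r$ gives $C_{u,r}(x)+C_{u,r}(x_{0})\leq 1$, hence $C_{u,r}(x)<1-\eta$ and $\varphi_{\eta}(C_{u,r}(x))=0$, so $\Phi_{r,\eta}(u)$ vanishes off $B(x_{0},2r)$. You also correctly note the role of $\eta>1/2$ (so that $1-\eta<\eta$ and $\varphi_{\eta}$ has the flat part $\varphi_{\eta}\equiv 0$ on $(-\infty,1-\eta]$) and of compactness and continuity of $C_{u,r}$ to guarantee a maximizer $x_{0}$ exists.
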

\begin{theorem}
\label{Th5.2}
For any $0<r< 1/2 r_0$  and $\eta > 1/2$  there exists  continuos function $\textbf{Cm}(r,\eta): L_{r,\eta}^{1}\rightarrow M$ , such that if $x\in M$ verifies that $C_{r,u}(x)>\eta$ then $\textbf{Cm}(r,\eta)(u)\in B(x,2r).$ Where \begin{equation}\label{centermass}
  \textbf{Cm}(r,\eta)(u)= \textbf{cm}(\Phi_{r,\eta}(u)).\end{equation}
\end{theorem}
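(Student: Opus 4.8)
The plan is to prove Theorem \ref{Th5.2} by reducing it to the classical Karcher–Grove center of mass together with Lemma \ref{L5.1}. First I would fix $0 < r < \frac{1}{2} r_0$ and $\eta > \frac12$, and recall that by Lemma \ref{L5.1}, for any $u \in L^1_{r,\eta}(M)$ the function $\Phi_{r,\eta}(u) = \varphi_\eta(C_{u,r}(\cdot))\,u(\cdot)$ is a nonnegative $L^1$ function whose support lies in a geodesic ball of radius $2r < r_0$. Since balls of radius $\leq r_0$ are strongly convex, the Karcher–Grove construction applies: the map $v \mapsto P_v(x) = \int_M d(x,y)^2 v(y)\,d\mu_g(y)$ has a unique global minimizer $\textbf{cm}(v)$, which moreover lies inside that same ball of radius $2r$. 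So I would \emph{define} $\textbf{Cm}(r,\eta)(u) \doteq \textbf{cm}(\Phi_{r,\eta}(u))$, exactly as in \eqref{centermass}, and the localization property follows immediately: if $C_{u,r}(x) > \eta$, then $x$ is a point of maximal $r$-concentration (or at least a point where the concentration exceeds $\eta$), and by the parenthetical remark in Lemma \ref{L5.1} the support of $\Phi_{r,\eta}(u)$ — hence $\textbf{cm}(\Phi_{r,\eta}(u))$ — sits in $B(x, 2r)$.

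The substantive point to verify is \emph{continuity} of $u \mapsto \textbf{Cm}(r,\eta)(u)$ from $L^1_{r,\eta}(M)$ (with the $L^1$-topology) to $M$. I would break this into two pieces. First, continuity of $u \mapsto \Phi_{r,\eta}(u)$ as a map $L^1_{r,\eta} \to L^1(M)$: the concentration function $C_{u,r}(x) = \|u\|_{L^1}^{-1}\int_{B(x,r)} |u|\,d\mu_g$ depends continuously on $u$ in $L^1$, uniformly in $x$ (since $\big|\int_{B(x,r)}|u_1| - \int_{B(x,r)}|u_2|\big| \leq \|u_1 - u_2\|_{L^1}$ and the normalizing factors are controlled away from $0$ on $L^1_{r,\eta}$), and $\varphi_\eta$ is Lipschitz; combining these with the uniform bound $|\varphi_\eta| \leq 1$ gives $L^1$-continuity of $\Phi_{r,\eta}$. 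Second, continuity of $\textbf{cm}(\cdot)$: if $v_k \to v$ in $L^1$ with all supports in a fixed ball $B$ of radius $\leq r_0$ (which holds on a neighborhood, since the support ball varies continuously), then $P_{v_k} \to P_v$ uniformly on $M$ because $|P_{v_k}(x) - P_v(x)| \leq (\mathrm{diam}\,M)^2 \|v_k - v\|_{L^1}$; uniform convergence of strictly convex functions (the $P_v$ are uniformly strictly convex on the strongly convex ball $B$ by the Karcher–Grove estimates) with unique minimizers forces the minimizers to converge. Chaining the two continuity statements yields the claim.

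The main obstacle — and the only place where real care is needed — is the continuity of the Karcher–Grove minimizer under $L^1$-perturbations, specifically making sure the support balls do not ``jump.'' The worry is that the point of maximal $r$-concentration $x_u$ (around which the support ball is centered) need not be unique, so $u \mapsto x_u$ is not obviously continuous. I would handle this by not tracking $x_u$ at all: instead observe that for $u$ in a small $L^1$-neighborhood of a fixed $u_0$, \emph{all} the relevant concentration is captured in a slightly enlarged fixed ball $B(x_{u_0}, 2r + \rho)$ of radius still below $r_0$ (using that $C_{u,r}$ is close to $C_{u_0,r}$ uniformly, so near-maximizers of $C_{u,r}$ lie near near-maximizers of $C_{u_0,r}$), and $\Phi_{r,\eta}(u)$ vanishes outside this enlarged ball whenever $\|u - u_0\|_{L^1}$ is small enough. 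On that fixed strongly convex ball the functional $P_v$ is genuinely uniformly convex, the minimizer is the unique critical point, and the implicit-function-type stability argument goes through cleanly. Since this is precisely the content worked out on p.\ 15 of \cite{Jimmy}, I would cite that for the quantitative estimates and present the argument above as the structural outline.
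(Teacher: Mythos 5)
The paper does not give its own proof of Theorem~\ref{Th5.2}: immediately before the statement it simply says ``For the proof of the following results\ldots see Pag.\ 15 of the already mentioned paper \cite{Jimmy}.'' So there is no in-paper argument to compare against. Your outline is the expected one --- define $\textbf{Cm}(r,\eta)$ as in \eqref{centermass}, invoke Lemma~\ref{L5.1} for the support property, appeal to Grove--Karcher for the existence and uniqueness of $\textbf{cm}$, and then verify continuity of the composition --- and it is structurally faithful to what \cite{Jimmy} does, which you acknowledge by closing with a citation to p.\ 15.

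Two technical points deserve sharpening. First, the localization step as you wrote it is slightly loose: Lemma~\ref{L5.1}'s parenthetical places the support of $\Phi_{r,\eta}(u)$ in a ball of radius $2r$ centered at a point of \emph{maximal} $r$-concentration, whereas the theorem asserts $\textbf{Cm}(r,\eta)(u)\in B(x,2r)$ for \emph{any} $x$ with $C_{u,r}(x)>\eta$. These need not be the same point, and the passage from one to the other is exactly where the hypothesis $\eta>1/2$ is used: if $C_{u,r}(x)>\eta$ and $C_{u,r}(y)>1-\eta$ (which is necessary for $y$ to lie in $\operatorname{supp}\Phi_{r,\eta}(u)$) and $d(x,y)\geq 2r$, then $B(x,r)$ and $B(y,r)$ are disjoint and $C_{u,r}(x)+C_{u,r}(y)\leq 1$, contradicting $\eta+(1-\eta)=1$ with strict inequalities; hence $d(x,y)<2r$. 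You hedge with ``or at least a point where the concentration exceeds $\eta$,'' but that phrase does not by itself license the conclusion.

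Second, your worry that the support ball might ``jump'' under $L^1$-perturbation --- and your proposed fix via a fixed enlarged ball $B(x_{u_0},2r+\rho)$ --- is more complicated than necessary, and the enlargement does not obviously stay below the strong-convexity radius (the na\"{\i}ve triangle-inequality bound gives $4r$, which need not be $<r_0$). A cleaner route that sidesteps this entirely: since $M$ is compact, extract a convergent subsequence $c_n=\textbf{cm}(v_n)\to c^*$; uniform convergence $P_{v_n}\to P_{v_0}$ (from $|P_{v_n}(x)-P_{v_0}(x)|\leq(\operatorname{diam} M)^2\|v_n-v_0\|_{L^1}$) together with $P_{v_n}(c_n)\leq P_{v_n}(c_0)$ forces $P_{v_0}(c^*)\leq P_{v_0}(c_0)$, so $c^*$ is a global minimizer of $P_{v_0}$; Grove--Karcher uniqueness (the support of $v_0$ lies in a ball of radius $2r<r_0$) then gives $c^*=c_0$. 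This makes the argument independent of any control on the location or size of the support ball for the perturbed functions.
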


\begin{definition}
  For any function $u$ as in Theorem \ref{Th5.2}, $\textbf{Cm}(r,\eta)(u)$ will be called a $(r,\eta)-$\emph{Riemannian center of mass} of $u$.
  \end{definition}


\begin{proposition}
\label{Cmre}
Let  $0<r< 1/2 r_0$. Then, there exist $\delta_{0} > 0$ and $\varepsilon_{0} > 0$ such that, for any  $u\in  \mathcal{Z}_{\ve}\cap J_{\ve}^{\textbf{d}_\ve + \delta}$  with $\varepsilon \in (0,\varepsilon_0)$ and $\delta \in (0, \delta_0 ]$,
    \[ \textbf{Cm}(\ve r,\eta)((u^+)^p)\neq  \textbf{Cm}(\ve r,\eta)((u^-)^{p}).\]
\end{proposition}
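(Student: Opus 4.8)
The plan is to argue by contradiction, combining Theorem \ref{Teo_1} (which locates a ball capturing a proportion $\eta$ of the mass of $u^\pm$) with the separation property built into $\mathcal{Z}_\ve$ via Lemma \ref{Bspi}. Suppose the conclusion fails: then there are sequences $\ve_k\to 0$, $\delta_k\to 0$, and $u_k\in\mathcal{Z}_{\ve_k}\cap J_{\ve_k}^{\mathbf{d}_{\ve_k}+\delta_k}$ with
\[
\textbf{Cm}(\ve_k r,\eta)\bigl((u_k^+)^p\bigr)=\textbf{Cm}(\ve_k r,\eta)\bigl((u_k^-)^p\bigr).
\]
First I would fix $\eta\in(1/2,1)$ close enough to $1$ and apply Theorem \ref{Teo_1} (with $r$ replaced by the scale $\ve_k r$ used in the center-of-mass construction, rescaling Remark \ref{BallK} accordingly) to get points $x_k^\pm\in M$ with
\[
\int_{B(x_k^\pm,\,\ve_k r)}|u_k^\pm|^p\,d\mu_g\;\geq\;\eta\int_M|u_k^\pm|^p\,d\mu_g,
\]
i.e. $C_{(u_k^\pm)^p,\,\ve_k r}(x_k^\pm)>\eta$, so $(u_k^\pm)^p\in L^1_{\ve_k r,\eta}(M)$ and, by Theorem \ref{Th5.2}, $\textbf{Cm}(\ve_k r,\eta)\bigl((u_k^\pm)^p\bigr)\in B(x_k^\pm,2\ve_k r)$.

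Next I would exploit the coincidence of the two centers of mass: it forces $d_g(x_k^+,x_k^-)\leq 4\ve_k r$, so the balls $B(x_k^\pm,\ve_k r)$ both lie inside a common ball $B(x_k^+, (4r+r)\ve_k)$ of radius comparable to $\ve_k$. Now I would show this is incompatible with $u_k\in\mathcal{Z}_{\ve_k}$. The idea: because $B(x_k^+,\ve_k r)$ captures at least a fraction $\eta$ of both $\int_M|u_k^+|^p$ and $\int_M|u_k^-|^p$, while outside that ball each of $u_k^+,u_k^-$ carries only a fraction $\le 1-\eta$ of its own mass, the function $u_k$ is, up to a small error controlled by $1-\eta$, supported on a single ball of radius $O(\ve_k)$ — but a sign-changing function concentrated on such a small ball should be close (in $\mathcal{L}_{\ve_k}$-distance, after normalizing via $t_{\ve_k}$) to either $\mathcal{P}$ or $-\mathcal{P}$, contradicting $u_k\in\mathcal{Z}_{\ve_k}\subset H_{\ve_k}\setminus\mathcal{A}_{\ve_k}(\mathcal{D}_{\ve_k})$. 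To make this quantitative I would use \eqref{C3} from Lemma \ref{C1}, namely $\|u_k^\pm\|_{\ve_k}^2\to \tfrac{2p}{p-2}\mathbf{m}(E)$, together with the cut-off estimates of Theorem \ref{Teo_1} (the $\widetilde u_{k,j}$ comparison) to bound $\|u_k^\pm\|^2_{\ve_k}$ restricted to $M\setminus B(x_k^+,\ve_k r)$ by a quantity of order $(1-\eta)$ times a fixed constant; hence $\text{dist}_{\ve_k}(u_k^-,\mathcal{P})^2$ (realized by truncating $u_k^-$ to the small ball and comparing) is $O(1-\eta)$. Choosing $\eta$ close enough to $1$ makes this less than $\alpha^2=\tfrac14 S_{\ve_k}^{p/(p-2)}$ (uniformly, since $S_{\ve_k}$ stays bounded away from $0$ by Theorem \ref{mep} and \eqref{mveSve}), placing $u_k$ in $\mathcal{B}_\alpha(\ve_k,\mathcal{P})\subset\mathcal{D}_{\ve_k}$ — contradiction.

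The main obstacle, and where I would spend the most care, is the quantitative step bounding $\text{dist}_{\ve_k}(u_k^\mp,\pm\mathcal{P})$: one has good control on the $L^p$-mass distribution from Theorem \ref{Teo_1}, but $\mathcal{D}_\ve$ is defined through the Hilbert norm $\mathcal{L}_\ve$, so one needs to convert concentration of $\int|u_k^\pm|^p$ into smallness of the gradient-plus-$L^2$ energy of $u_k^\mp$ away from the concentration ball. This is exactly the kind of cut-off argument already carried out in the proof of Theorem \ref{Teo_1} (the passage through $\widetilde u_{k,j}$, using $\ve^2\|\nabla\widetilde u_{k,j}\|^2\leq 2\ve^2\|\nabla u_k^+\|^2+2(u_k^+)^2$ on the transition annulus), so I would reuse that machinery: localize $u_k^-$ to the complement of $B(x_k^+, c\ve_k)$, estimate its $H_{\ve_k}$-norm there by its $L^p$-norm there (which is $O(1-\eta)$) via the embedding constant, and note the remaining piece is a small-support approximately nonnegative bump. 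A secondary technical point is matching the two $r$-scales: Theorem \ref{Teo_1} is stated for a fixed $r$ while the center of mass uses radius $\ve r$; this is harmless because the whole construction is conformally rescaled by $\ve$, but I would state it explicitly (apply Theorem \ref{Teo_1} in normal coordinates rescaled by $\ve_k$, where the relevant radius becomes $r$) to keep the logic clean.
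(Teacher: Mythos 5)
Your overall setup (contradiction, invoking Theorem~\ref{Teo_1} and Theorem~\ref{Th5.2} to locate concentration points $x_k^{\pm}$ with $\mathrm{dist}_g(x_k^+,x_k^-)=O(\ve_k)$) matches the paper up to the point where you must derive a contradiction. From there your route diverges from the paper's, and it does not work.

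The gap is in the central claim that ``a sign-changing function concentrated on such a small ball should be close (in $\mathcal{L}_{\ve_k}$-distance, after normalizing) to either $\mathcal{P}$ or $-\mathcal{P}$,'' quantified as $\mathrm{dist}_{\ve_k}(u_k,\mathcal{P})^2=O(1-\eta)$. This is false. By \eqref{umdis}, $\mathrm{dist}_{\ve_k}(u_k,\mathcal{P})\geq S_{\ve_k}^{1/2}\,|u_k^-|_{p,\ve_k}$, and by Lemma~\ref{C1} (display \eqref{C3}) one has $|u_k^{\pm}|_{p,\ve_k}^p\to\frac{2p}{p-2}\mathbf{m}(E)>0$, while $S_{\ve_k}$ stays bounded away from zero by Theorem~\ref{mep} and \eqref{mveSve}. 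Hence $\mathrm{dist}_{\ve_k}(u_k,\pm\mathcal{P})$ is bounded below by a positive constant \emph{independent of where the mass concentrates}. Concentration of $u_k^+$ and $u_k^-$ in the same $O(\ve_k)$-ball says nothing about the size of $u_k^-$ in the $\mathcal{L}_{\ve_k}$-norm; both bubbles retain energy of order $\mathbf{m}(E)$, so you can never push $u_k$ into $\mathcal{B}_{\alpha}(\ve_k,\pm\mathcal{P})$ by taking $\eta$ close to $1$. The cut-off estimate from the proof of Theorem~\ref{Teo_1} controls the $L^p$-mass \emph{outside} the good ball, but the obstruction here is the mass \emph{inside} the ball: $u_k^-$ lives there with order-one norm. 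No choice of $\eta$ repairs this.

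The paper's proof instead uses a blow-up argument. It rescales $u_k$ at scale $\ve_k$ around the two centers of mass $c_k^{\pm}=\textbf{Cm}(\ve_k r,\eta)((u_k^{\pm})^p)$ to produce $w_k^1,w_k^2\in H^1(\R^n)$, extracts weak limits $w^1,w^2$, and shows (via the energy bound $J_\infty(w^i)\leq 2\mathbf{m}(E)$ and the argument of Lemma~5.7 in \cite{BBM}) that $w^1>0$ and $w^2<0$ are strictly signed entire solutions. Then a third rescaling around the common limit $q$ of $q_k^{\pm}$, together with a change-of-variables comparison (valid because $c_k^{\pm}\to q$ makes the transition maps $\psi_k$ converge to the identity), yields $w=w^1$ and $w=w^2$, which is the contradiction. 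This mechanism is genuinely different: it never tries to make $u_k$ close to a signed cone, and it must be different, for the reason above. If you want to save your outline, the place to redirect the effort is after you obtain $\mathrm{dist}_g(x_k^+,x_k^-)=O(\ve_k)$: rather than estimating $\mathrm{dist}_{\ve_k}(u_k,\pm\mathcal{P})$, pass to rescaled coordinates and compare the signed limiting bubbles.
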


\begin{proof} Let $\ve_k, \delta_k > 0 $ and $u_k \in \mathcal{Z}_{\ve_k}\cap J_{\ve_k}^{\textbf{d}_{\ve} + \delta}$ be such that
  $\ve_k \rightarrow 0,$ $\delta_k \rightarrow 0$ and for each $k$, $\textbf{Cm}(\ve_k r,\eta)((u^+_k)^p) = \textbf{Cm}(\ve_k r,\eta)((u^{-}_k)^{p}).$ From Theorem \ref{Teo_1}, there exist sequences $q^{+}_k, q^{-}_{k}\in M$ such that
\begin{equation}\label{ideabasic1}
  \int_{B(q^{\pm}_{k},\ve_k r)}\vert u^{\pm}_k\vert^p d\mu_g \geq \eta \int_{M}\vert u^{\pm}_k\vert^pd\mu_g.
  \end{equation}
From \eqref{Cur} and \eqref{ideabasic1},
\[  C_{u^+,\ve_k r}(q_k^{+})\doteq\dfrac{\int_{B(q_{k}^{+},\ve_k r)}\vert (u^+)^p\vert d\mu_{g}}{\|(u^+)^p\|_{_{L^{1}(M)}}}\geq \eta.\]
Moreover, if we use Theorem \ref{Th5.2}, we get
  \[ \textbf{Cm}(\ve_k r,\eta)((u^+_k)^p) \in B( q^{+}_k, 2\ve_k r).\]
Hence,  
  \[\lim_{k \rightarrow \infty }\| \textbf{Cm}(\ve_k r,\eta)((u^+_k)^p) - q^{+}_k \| =0. \]
    In similar fashion, we also have
 \[\lim_{k \rightarrow \infty }\| \textbf{Cm}(\ve_k r,\eta)((u^-_k)^p) - q^{-}_k \| =0.  \]
 
  Now, given that   $\textbf{Cm}(\ve_k r,\eta)((u^+_k)^p) = \textbf{Cm}(\ve_k r,\eta)((u^{-}_k)^{p})$ and  $M$ is compact, we have that  
  \begin{equation}
  \label{Pointq}
  q^{+}_k \rightarrow q\quad \text{ and}\quad q^{-}_k \rightarrow q.
  \end{equation}
  
  We now define $w_k^1,w^2_k  \in H^{1}(\mathbb{R}^n)$ by
  \begin{equation}
  \label{wki}
    w_k^{1}(x)\doteq\chi(\ve_k \|x \|)u_k (exp_{c_k^{+}}(\ve_k x))\quad \text{and}\quad w_k^{2}(x)\doteq\chi(\ve_k \|x \|)u_k (exp_{c_k^{-}}(\ve_k x)),
  \end{equation}
  where $c_k^{+}\doteq \textbf{Cm}(\ve_k r,\eta)((u^+_k)^p)$ and $c_k^{-}\doteq \textbf{Cm}(\ve_k r,\eta)((u^- _k)^p.$ Note that, since we are considering the centers of mass $c_k^{\pm}$, then  $ w_k^{1} \neq 0$ and $ w_k^{2} \neq 0$.
  
  By (\ref{C3}), the sequence $\|u_k\|_{\ve_{k}}$ is bounded, so $w_k^{1},w_k^{2}$ are bounded in $ \in H^{1}(\mathbb{R}^n)$ (see Lemma 5.6 in \cite{BBM}).
  Therefore, we have that, up to a subsequence, $w_{k}^{i}\rightharpoonup w^{i} $ weakly in $H^1(\mathbb{R}^n)$, $w_{k}^{i}\rightarrow w^{i}$ a.e in  $\mathbb{R}^n$, and
  $w_{k}^{i}\rightarrow w^{i}$ strongly in $L_{loc}^{p}(\mathbb{R}^n)$ for $i=1,2$.
   
   Now, from Theorem \ref{Teo_1}, $(w^{1})^+ \neq 0$, then $w^1 > 0.$ Analogously $w^2 < 0$. Both $w^{1}$ and $w^2$ are  weak solutions of equation $-\Delta w + w =\vert w\vert^{p-2}w$ and $J_{\infty}(w^i)\leq 2c_{\infty}$.
Since in our setting we still have Ekeland's Lemma, see Lemma \ref{ekeland}, the  proof follows the argument of Lemma 5.7 in \cite{BBM}.
  
  We consider the function
  \begin{equation}
    w_k(x)\doteq\chi(\ve_k x)u_k (exp_{q}(\ve_k x)),\quad \text{where $q$ is as in \eqref{Pointq}}.
  \end{equation}

   Once more, up to a subsequence $w_{k}\rightharpoonup w $ weakly in $H^1(\mathbb{R}^n), $ $w_{k}\rightarrow w$ a.e in  $\mathbb{R}^n$, and
   $w_{k}\rightarrow w$ strongly in $L_{loc}^{p}(\mathbb{R}^n)$, and  $w\neq 0.$ In order to see this, we notice that for every $\varphi \in C_c^{\infty}(\mathbb{R}^n)$  and $k$ large enough, 
\[\int_{\mathbb{R}^n} w_k(x)\varphi(x)dx = \int_{\mathbb{R}^n}w_k^{1}(\psi_k(x))\varphi(x)dx  = \int_{\mathbb{R}^n}w_k^{1}(x)\varphi(\psi_k^{-1}(x))\|\det \psi_{k}^{'}(x)\| dx, \]
where 
\begin{equation*}
\psi_k(x)= \ve_k^{-1}\exp_{c_k^+}^{-1}(\exp_{q}(\ve_k x))
\end{equation*}
Now, if $k\rightarrow \infty $, we have

\[\int_{\mathbb{R}^n} w(x)\varphi(x)dx = \int_{\mathbb{R}^n}w^{1}(x)\varphi(x)dx \qquad \text{for all } \varphi\in C_c^{\infty}(\mathbb{R}^n). \]
So, $w=w^1$. Following a similar argument, we also have $w=w^2$. Therefore, $w=w^1  > 0$ and   $w=w^2 <  0$. This is a contradiction. 
\end{proof}

For $\delta_0>0$ and $\ve_0>0$ as in  Proposition \ref{Cmre}, we define the map $  \textbf{c}_{\ve}:  \mathcal{Z}_{\ve}\cap J_{\ve}^{\textbf{d}_\ve + \delta_0}\rightarrow  F(M)$ by
\begin{equation}
 \textbf{c}_{\ve}(u)\doteq( \textbf{Cm}(r,\eta)((u^+)^p), \textbf{Cm}(r,\eta)((u^-)^{p}))
\end{equation}

\begin{remark} The group $\mathbb{Z}_{2}=\{-1,1\}$ acts in $F(M)$  by $\theta \cdot (x,y)=(y,x)$. This action is free, moreover,
 the maps $\textbf{c}_\ve$ and $i_{\ve}$ are $\mathbb{Z}_{2}-$invariants. In this way we have defined the following
 function \[\widehat{\textbf{c}_{\ve}}: \Big(\mathcal{Z}_{\ve}\cap J_{\ve}^{\mathbf{d}_\ve + \delta_0}\Big)/\mathbb{Z}_{2}\rightarrow C(M)=F(M)/\mathbb{Z}_{2}.\]
\end{remark}

\begin{remark} Let $\check{\mathcal{H}}$ be \v Cech cohomology with $\mathbb{Z}_{2}$ coefficients. This cohomology coincides with singular cohomology $\mathcal{H}^{*}$ on manifolds.
\end{remark}

For $C_{\ve}\doteq F_{\ve}/\mathbb{Z}_{2}$, we have the following result.

\begin{proposition}
  There exists a homomorphism
  \[\tau_{\ve}:\check{\mathcal{H}}\Big( \Big(\mathcal{Z}_{\ve}\cap J_{\ve}^{\mathbf{d}_\ve + \delta_0}\Big)/\mathbb{Z}_{2}\Big )\rightarrow \mathcal{H}^{*}(C_{\ve}(M))\]
  such that  the composition
  \[ \tau_{\ve}\circ \widehat{\textbf{c}_{\ve}}^{*}: \mathcal{H}^{*}(C(M))\rightarrow \mathcal{H}^{*}(C_{\ve}(M))\]
  is the homomorphism induced by the inclusion $C_{\ve}(M)\hookrightarrow C(M)$, wich is an isomorphism for $\ve>0$ small enough.
\end{proposition}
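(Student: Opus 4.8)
The plan is to define $\tau_{\ve}$ as the homomorphism induced in \v Cech cohomology by the map
\[
\overline{i_{\ve}}\colon C_{\ve}(M)=F_{\ve}(M)/\mathbb{Z}_{2}\longrightarrow \Big(\mathcal{Z}_{\ve}\cap J_{\ve}^{\mathbf{d}_\ve+\delta_0}\Big)/\mathbb{Z}_{2}
\]
obtained by passing $i_{\ve}$ to the $\mathbb{Z}_{2}$-quotients. For this to be legitimate one first records that, for $\ve$ small, $i_{\ve}$ carries $F_{\ve}(M)$ into $\mathcal{Z}_{\ve}\cap J_{\ve}^{\mathbf{d}_\ve+\delta_0}$: by Lemma \ref{iepc} one has $i_{\ve}(x,y)\in \mathcal{E}_{\ve}\cap J_{\ve}^{2\mathbf{m}(E)+\delta}$ for any preassigned $\delta$, and $2\mathbf{m}(E)+\delta\le \mathbf{d}_\ve+\delta_0$ by Proposition \ref{dep} once $\delta\le\delta_0/2$ and $\ve$ is small; that $i_{\ve}(x,y)$ in fact lies in $\mathcal{Z}_{\ve}$ follows from the flow properties of Lemma \ref{Bspi} (the negative gradient flow issued at this energy level from a point of $\mathcal{E}_{\ve}$ does not enter $\mathcal{D}_{\ve}$). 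Since $i_{\ve}$ is $\mathbb{Z}_{2}$-equivariant, $\overline{i_{\ve}}$ is well defined; set $\tau_{\ve}\doteq\overline{i_{\ve}}^{*}$, which is a homomorphism by functoriality of \v Cech cohomology. Then $\tau_{\ve}\circ\widehat{\mathbf{c}_{\ve}}^{*}=\big(\widehat{\mathbf{c}_{\ve}}\circ\overline{i_{\ve}}\big)^{*}$, so the whole statement reduces to: (a) $\widehat{\mathbf{c}_{\ve}}\circ\overline{i_{\ve}}\colon C_{\ve}(M)\to C(M)$ is homotopic to the inclusion $j_{\ve}\colon C_{\ve}(M)\hookrightarrow C(M)$, and (b) $j_{\ve}$ is a homotopy equivalence for $\ve$ small.

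For (a) I would work upstairs and build a $\mathbb{Z}_{2}$-equivariant homotopy between $\mathbf{c}_{\ve}\circ i_{\ve}\colon F_{\ve}(M)\to F(M)$ and the inclusion $F_{\ve}(M)\hookrightarrow F(M)$. The crucial estimate is that the first component of $\mathbf{c}_{\ve}(i_{\ve}(x,y))$, namely $\mathbf{Cm}(\ve r,\eta)\big((t_{\ve}(u_{\ve,x})u_{\ve,x})^{p}\big)=\mathbf{Cm}(\ve r,\eta)\big(u_{\ve,x}^{p}\big)$ (the concentration function is scale invariant in its argument), lies within $2\ve r$ of $x$, uniformly in $x\in M$. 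Indeed, since $u_{\ve,x}(y)=U(\exp_{x}^{-1}(y)/\ve)\chi_{r}(\exp_{x}^{-1}(y))$ with $U$ exponentially decaying, for $\eta<1$ fixed and $r$ chosen large enough (depending on $\eta$ and the decay rate of $U$) one gets, for $\ve$ small, $u_{\ve,x}^{p}\in L^{1}_{\ve r,\eta}(M)$ with $C_{(u_{\ve,x})^{p},\,\ve r}(x)>\eta$ uniformly in $x$; Theorem \ref{Th5.2} then forces $\mathbf{Cm}(\ve r,\eta)(u_{\ve,x}^{p})\in B_{g}(x,2\ve r)$. Hence $\mathbf{c}_{\ve}(i_{\ve}(x,y))=(\bar{x},\bar{y})$ with $d_{g}(\bar{x},x),\,d_{g}(\bar{y},y)\le 2\ve r$, and since $(x,y)\in F_{\ve}(M)$ means $d_{g}(x,y)\ge 2\ve R_{0}$ while $2r<R_{0}$, we get $d_{g}(\bar{x},\bar{y})\ge 2\ve R_{0}-4\ve r>0$; so $\mathbf{c}_{\ve}\circ i_{\ve}$ indeed takes values in $F(M)$ and stays at distance $\ge c\ve>0$ from the diagonal. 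Joining $(x,y)$ to $(\bar{x},\bar{y})$ by the short geodesic homotopy $s\mapsto(\gamma_{x}(s),\gamma_{y}(s))$, where $\gamma_{x}$ is the (unique, for $\ve$ small) minimizing geodesic from $x$ to $\bar{x}$, keeps us inside $F(M)$ by this distance bound, and, all constructions being symmetric under interchange of the two arguments, it is $\mathbb{Z}_{2}$-equivariant; passing to quotients yields $\widehat{\mathbf{c}_{\ve}}\circ\overline{i_{\ve}}\simeq j_{\ve}$.

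For (b), $j_{\ve}\colon C_{\ve}(M)\hookrightarrow C(M)$ is a homotopy equivalence for $\ve$ small because $F(M)$ $\mathbb{Z}_{2}$-equivariantly deformation retracts onto $F_{\ve}(M)$, by pushing a pair with $0<d_{g}(x,y)<2\ve R_{0}$ apart along the geodesic through $x$ and $y$ (the standard deformation in the configuration-space/barycenter technique, cf.\ \cite{BMW}); this deformation is free and symmetric, hence descends to $C(M)$. Therefore $j_{\ve}^{*}\colon\mathcal{H}^{*}(C(M))\to\mathcal{H}^{*}(C_{\ve}(M))$ is an isomorphism, and by homotopy invariance of \v Cech cohomology (which coincides with $\mathcal{H}^{*}$ on the manifold $C_{\ve}(M)$, as in the remark above),
\[
\tau_{\ve}\circ\widehat{\mathbf{c}_{\ve}}^{*}=\big(\widehat{\mathbf{c}_{\ve}}\circ\overline{i_{\ve}}\big)^{*}=j_{\ve}^{*},
\]
which is the asserted isomorphism.

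The main obstacle I expect is the uniform-in-$x$ concentration estimate $\mathbf{Cm}(\ve r,\eta)(u_{\ve,x}^{p})\in B_{g}(x,2\ve r)$ together with a consistent choice of the parameters: one needs $r<\tfrac12 r_{0}$ for the Riemannian center of mass to be defined on balls of radius $2\ve r$, but $r$ large relative to $\eta$ and to the exponential decay of $U$ so that the $\ve$-rescaled bumps $u_{\ve,x}^{p}$ actually lie in $L^{1}_{\ve r,\eta}(M)$, and simultaneously $2r<R_{0}$ so that the geodesic homotopy never crosses the diagonal; verifying that all of these can be arranged for a single range $\ve\in(0,\ve_{0})$ and $\delta_{0}$ as in Proposition \ref{Cmre} is the delicate bookkeeping, while the homological step is then routine.
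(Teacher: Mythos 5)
Your overall strategy — define $\tau_{\ve}\doteq\overline{i_{\ve}}^{*}$, show $\widehat{\mathbf{c}_{\ve}}\circ\overline{i_{\ve}}$ is homotopic to the inclusion $j_{\ve}\colon C_{\ve}(M)\hookrightarrow C(M)$, and show $j_{\ve}$ is a homotopy equivalence via the geodesic pushing-apart deformation — is the natural one and is likely what the paper intends. Steps (a) and (b) of your homotopy argument are sound, and the bookkeeping on $r$, $\eta$, $\ve_0$, $\delta_0$ that you flag at the end is indeed delicate but manageable.

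However, there is a genuine gap at the very first step: you assert that $i_{\ve}(x,y)\in\mathcal{Z}_{\ve}$ ``follows from the flow properties of Lemma \ref{Bspi} (the negative gradient flow issued at this energy level from a point of $\mathcal{E}_{\ve}$ does not enter $\mathcal{D}_{\ve}$),'' but this does not follow from Lemma \ref{Bspi}, which only controls the \emph{initial} point. Lemma \ref{Bspi}(1) shows $\mathcal{E}_{\ve}\cap\big(\mathcal{B}_{\alpha}(\ve,\mathcal{P})\cup\mathcal{B}_{\alpha}(\ve,-\mathcal{P})\big)=\emptyset$, so $i_{\ve}(x,y)$ \emph{starts} outside $\mathcal{D}_{\ve}$; and Lemma \ref{Bspi}(2) shows the tubes $\mathcal{B}_{\alpha}(\ve,\pm\mathcal{P})$ are strictly positively invariant (once in, you stay in), which prevents \emph{exiting} the tubes but says nothing about \emph{entering} them. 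Since the flow does not preserve the constraint set $\mathcal{E}_{\ve}$, the orbit $\varphi_{\ve}(t,i_{\ve}(x,y))$ immediately leaves $\mathcal{E}_{\ve}$ and could, a priori, be absorbed into $\mathcal{B}_{\alpha}(\ve,\mathcal{P})$ (or $-\mathcal{P}$), e.g., by converging to a one-signed ground state at energy $\approx\mathbf{m}_{\ve}$, which sits strictly below the starting energy $\approx 2\mathbf{m}_{\ve}\approx\mathbf{d}_{\ve}$ and is therefore energetically accessible. To close the argument one needs a separate quantitative estimate (along the lines of the corresponding lemmas in \cite{BMW} and \cite{CM}) guaranteeing that, for $\ve$ and $\delta_0$ small, the negative-gradient orbit starting from the two-bump configurations $i_{\ve}(x,y)$ keeps $\mathrm{dist}_{\ve}(\cdot,\pm\mathcal{P})$ bounded away from $\alpha$ — equivalently, that the $\omega$-limit of the orbit consists of sign-changing critical points — or else one must define $\tau_{\ve}$ by a more indirect route than $\overline{i_{\ve}}^{*}$. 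As written, this key inclusion $i_{\ve}(F_{\ve}(M))\subset\mathcal{Z}_{\ve}$ is an unproved assertion, not a consequence of what precedes it.
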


Recall that the cup-length of a topological space $X$, denote it by $\text{cupl}\,(X)$, is the smaller integer $k \geq 1$ such that  the cup-product of any $k$ cohomology
class in $\widetilde{\mathcal{H}}^{*}(X)$ is zero, where $\widetilde{\mathcal{H}}^{*}(X)$ is the reduced cohomology.

\begin{proof}[Proof of Theorem  \ref{Teo-Mult-Cat}]
From Lemma 2.1 we have that  $J_\ve$ satisfies the Palais-Smale condition in $\mathcal{Z_\ve}\cap J_{\ve}^{\mathbf{d}_{\ve} + \delta_0} $. Suppose that contains $k$ pairs $\pm u_1,\ldots,\pm u_k$ critical points of $J_\ve$ and
$J_\ve(u_1)\leq J_\ve(u_2)\leq \cdots \leq J_\ve(u_k).$ From Lemma \ref{ekeland},  we have that
$\mathcal{Z_\ve}\cap J_{\ve}^{\mathbf{d}_{\ve} + \delta_0} $ is positively invariant for the negative gradient flow
 $ \varphi_\ve$ of $\nabla J_\ve$. Hence, for all $u\in\mathcal{Z_\ve}\cap J_{\ve}^{\mathbf{d}_{\ve} + \delta_0} $ there exists
 $j$ with $\varphi_\ve(t,u)\rightarrow \pm u_j$ as $t\rightarrow \infty.$ Let us consider the sets
\[
X_j \doteq \{ u\in\mathcal{Z_\ve}\cap J_{\ve}^{\mathbf{d}_{\ve} + \delta_0}: \varphi_\ve(t,u)\rightarrow \pm u_j \text{ as }  t\rightarrow \infty\}.
\]

The sets $X_j$ are pairwise disjoint  and cover $\mathcal{Z_\ve}\cap J_{\ve}^{\mathbf{d}_{\ve} + \delta_0}$. Now, by the 
Palais-Smale condition for $J_\ve$ in $\mathcal{Z_\ve}\cap J_{\ve}^{\mathbf{d}_{\ve} + \delta_0}$, we have that the union
 $X_1\cup\cdots\cup X_j$ for every $j=1,\ldots, k,$ is an open set of   $\mathcal{Z_\ve}\cap J_{\ve}^{\mathbf{d}_{\ve} + \delta_0}$, therefore $X_j$ is a
 locally closed subset of  $\mathcal{Z_\ve}\cap J_{\ve}^{\mathbf{d}_{\ve} + \delta_0}$. Using the flow $\varphi_\ve$,   $X_j$ can be
 deformated to $\pm u_j$ in  $\mathcal{Z_\ve}\cap J_{\ve}^{\mathbf{d}_{\ve} + \delta_0}$. Hence,
  \[\mathrm{Cat}_{\mathbb{Z}_2}(\mathcal{Z_\ve}\cap J_{\ve}^{\mathbf{d}_{\ve} + \delta_0})\leq k.\]
\end{proof}

\begin{proof}[Proof of Theorem 1.3] We have by Theorem  \ref{Teo-Mult-Cat} that $J_\ve$ has at  least $\mathrm{Cat}_{\mathbb{Z}_2}(\mathcal{Z_\ve}\cap J_{\ve}^{\mathbf{d}_{\ve} + \delta_0})$ sign
changing solutions. Moreover, we have that $\mathrm{Cat}_{\mathbb{Z}_2}(\mathcal{Z_\ve}\cap J_{\ve}^{\mathbf{d}_{\ve} + \delta_0}) \geq  \mathrm{Cupl}\Big(  \Big(\mathcal{Z}_\ve \cap J_{\ve}^{d_{\ve} + \delta_0} \Big)/\mathbb{Z}_2 \Big)\Big)$, see \cite{BMW} sections 5.2. The inclusion $i_\ve : F_{\ve}(M) \hookrightarrow F(M)$ is a homotopy equivalent for all $\ve \in (0,\ve_0)$, therefore, from the Proposition 5.8  it follows that
  \[
   \mathrm{Cupl}((\mathcal{Z_\ve}\cap J_{\ve}^{\mathbf{d}_{\ve} + \delta_0})/\mathbb{Z}_2) \geq  \mathrm{Cupl}\,C(M).
   \]
\end{proof}


\end{document}